\newtheorem{theorem}{Theorem}
\newtheorem{lemma}[theorem]{Lemma}
\newtheorem{proposition}[theorem]{Proposition}
\newtheorem{remark}[theorem]{Remark}
\newtheorem{question}[theorem]{Question}
\newtheorem{definition}[theorem]{Definition}
\numberwithin{equation}{section}
\begin{document}

\newcommand{\cc}{\mathfrak{c}}
\newcommand{\N}{\mathbb{N}}
\newcommand{\BB}{\mathbb{B}}
\newcommand{\C}{\mathcal{C}}
\newcommand{\J}{\mathcal{J}}
\newcommand{\Q}{\mathbb{Q}}
\newcommand{\R}{\mathbb{R}}
\newcommand{\Z}{\mathbb{Z}}
\newcommand{\T}{\mathbb{T}}
\newcommand{\HH}{\mathbb{H}}
\newcommand{\st}{*}
\newcommand{\PP}{\mathbb{P}}
\newcommand{\rin}{\right\rangle}
\newcommand{\SSS}{\mathbb{S}}
\newcommand{\forces}{\Vdash}
\newcommand{\dom}{\text{dom}}
\newcommand{\osc}{\text{osc}}
\newcommand{\F}{\mathcal{F}}
\newcommand{\A}{\mathcal{A}}
\newcommand{\B}{\mathcal{B}}
\newcommand{\I}{\mathcal{I}}
\newcommand{\X}{{X}}
\newcommand{\Y}{\mathcal{Y}}
\newcommand{\V}{\mathcal{V}}
\newcommand{\XX}{\mathcal{X}}
\newcommand{\CC}{\mathcal{C}}
\newcommand{\norm}[1]{\left\lVert#1\right\rVert}
\newcommand{\coz}{{\mathsf{coz}}}
\newcommand{\supp}{{\mathsf{supp}}}

\thanks{The research of the first named author was supported by the NCN (National Science
Centre, Poland) research grant no.\ 2020/37/B/ST1/02613.}

\subjclass[2020]{46B03, 46B26, 54D40, 54G05, 54D05}
\title[On subspaces of indecomposable Banach spaces]
{On subspaces of indecomposable Banach spaces}

\author{Piotr Koszmider}
\address{Institute of Mathematics  of the Polish Academy of Sciences,
ul. \'Sniadeckich 8,  00-656 Warszawa, Poland}
\email{\texttt{piotr.math@proton.me}}

\author{Zden\v ek  Silber}
\address{Institute of Mathematics of the Polish Academy of Sciences,
ul. \'Sniadeckich 8,  00-656 Warszawa, Poland}
\email{\texttt{zdesil@seznam.cz}}

\begin{abstract} 
We address the following question: what is the class of  Banach spaces isomorphic to subspaces of indecomposable Banach spaces?
We show that this class includes all Banach spaces of density not bigger than the continuum which
do not admit $\ell_\infty$ as a quotient (equivalently do not admit a subspace isomorphic to $\ell_1(\cc)$).
This includes all Asplund spaces and all weakly Lindel\"of determined Banach spaces of density not
bigger than the continuum.
However, we also show that this class includes some Banach spaces admitting $\ell_\infty$ as a quotient.
This sheds some light on the question asked  in [S. Argyros, R. Haydon, 
\emph{Bourgain-Delbaen $L^\infty$-spaces, the scalar-plus-compact property
and related problems}, Proceedings of the International Congress of Mathematicians (ICM 2018),
Vol. III, 1477--1510. Page 1502] whether all Banach spaces not containing $\ell_\infty$ embed
in some indecomposable Banach spaces.
Our method of constructing indecomposable Banach spaces above
a given Banach space is a considerable modification of the method of constructing Banach spaces
of continuous functions with few$^*$ operators developed before by the first-named author.
\end{abstract}

\maketitle

\section{Introduction}

Resolving a problem of Lindenstrauss from \cite{lind-problem} whether every infinite dimensional Banach space $\XX$ is 
decomposable (i.e., can be written as a direct sum $\XX'\oplus\XX''$, where $\XX', \XX''$ are infinite
dimensional closed subspaces of $\XX$) Gowers and Maurey have constructed in \cite{gm}
Banach spaces where no infinite dimensional closed subspace is decomposable (such spaces will be called here as usual
HI spaces i.e., hereditarily indecomposable spaces). A natural question which arises in
this context which was considered in \cite{argyros-etal,
argyros-haydon, argyros-r} is:

\begin{question}\label{main-question} Which Banach spaces can be 
isomorphic to subspaces of indecomposable Banach spaces?
\end{question}

The authors of the papers \cite{argyros-etal,
argyros-haydon, argyros-r} looked at it mainly in the case of separable Banach spaces. 
In particular, Argyros and Raikoftsalis showed in
\cite{argyros-r} that every separable reflexive Banach space is a subspace of an indecomposable separable Banach space.
However, it is mentioned in \cite{argyros-r} that the authors of this paper together
with R. Haydon constructed 
an indecomposable separable Banach space which contains $\ell_1$. Also in \cite{argyros-r} a problem is posed
if all separable Banach spaces not containing $c_0$ are subspaces of an indecomposable Banach space
(as by the Sobczyk theorem,
separable spaces containing $c_0$ cannot be subspaces of separable indecomposable spaces).

In this paper we address Question \ref{main-question} in the nonseparable case of Banach spaces up to
the density continuum (denoted by $\cc$). For this we use another type 
of infinite dimensional indecomposable Banach spaces constructed
 in \cite{few} by the first-named author, namely
classical Banach spaces of continuous functions on a compact Hausdorff space (we call  them spaces of the
form $C(K)$). Such spaces must be nonseparable and 
cannot be HI as they always contain $c_0$ or even $C([0,1])$, and hence any separable Banach space.
However, they carry much algebraic and lattice structure and can be of arbitrarily large size, at least
under some set-theoretic assumption (\cite{superfew}) unlike HI spaces which all embed into $\ell_\infty$
(\cite{argyros-tolias}, \cite{plichko-yost}). 

We obtain the following partial answer to Question \ref{main-question}:

\begin{theorem}\label{main} If $\XX$ is a Banach space of density not bigger
than the continuum and it does not admit a quotient isomorphic to $\ell_\infty$,
then there is an indecomposable Banach space of density not bigger than the continuum of the form
$C(K)$ for a compact Hausdorff $K$ and an isometric embedding of $\XX$ into $C(K)$.
\end{theorem}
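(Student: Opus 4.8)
The plan is to realise $K$ as the inverse limit of a transfinite system of compact \emph{connected} spaces $K_\alpha$ erected over the dual ball of $\XX$ (so that $C(K)=\overline{\bigcup_\alpha C(K_\alpha)}$), arranged so that every bounded operator on the resulting $C(K)$ is a ``weak multiplication''; such $C(K)$ are automatically indecomposable, while performing the whole construction over $B_{\XX^*}$ traps an isometric copy of $\XX$ inside $C(K)$. I may assume $\XX$ is infinite-dimensional. First I would fix the base $L_0:=(B_{\XX^*},w^*)$: it is compact, connected (being convex in $(\XX^*,w^*)$), of weight $\mathrm{dens}(\XX)\le\cc$, without isolated points, and $x\mapsto(x^*\mapsto\langle x^*,x\rangle)$ is an isometric embedding of $\XX$ into $C(L_0)$ since $B_{\XX^*}$ is $1$-norming. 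Because $\mathrm{dens}\,C(K)=w(K)$ and $f\mapsto f\circ\pi$ isometrically embeds $C(L_0)$ into $C(K)$ for any continuous surjection $\pi\colon K\twoheadrightarrow L_0$, it suffices to build a compact connected $K$ with no isolated points, of weight $\le\cc$, surjecting onto $L_0$, on which every operator $T\colon C(K)\to C(K)$ has the form $gI+S$ with $g\in C(K)$ and $S$ in the ``small'' operator ideal used in \cite{few}. Granting this, a projection $P=gI+S$ gives $(g^2-g)I=S-gS-Sg-S^2$ in the ideal; since multiplication by a nonzero continuous function on a $C(K)$ over a space with no isolated points fixes a copy of $c_0$ and hence is not in the ideal, $g^2=g$, so $g\equiv 0$ or $g\equiv 1$ by connectedness, so one of $P,I-P$ lies in the ideal and thus has finite-dimensional range (being a reflexive complemented subspace of a $C(K)$-space) --- whence $C(K)$ is indecomposable.

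Next I would run the recursive construction, which is the ``few$^*$ operators'' machinery of \cite{few} relativised to the fixed base $L_0$. Build an inverse system $\langle K_\alpha,\pi^\beta_\alpha:\alpha\le\beta<\cc\rangle$ of compact connected spaces with $K_0=L_0$, surjective bonding maps, inverse limits at limit stages, and each $K_{\alpha+1}$ a ``simple extension'' of $K_\alpha$ --- a closed subspace of $K_\alpha\times[0,1]$ mapping onto $K_\alpha$ --- chosen to preserve connectedness, absence of isolated points, weight $\le\cc$, and, crucially, the absence of a copy of $\ell_1(\cc)$ in $C(K_\alpha)$. Two kinds of step alternate: \emph{splitting} extensions, guaranteeing that every point of every $K_\alpha$ is splittable, and \emph{operator-killing} extensions, which along a bookkeeping of length $\cc$ dispose of potential non-weak-multiplication operators. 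One cannot list all operators on $C(K)$ directly (there are up to $2^\cc$), but a coherence argument standard in this area reduces the task to a family of only $\cc$ operators, visible at intermediate stages, which a bookkeeping of length $\cc$ can absorb. Setting $K:=\varprojlim_{\alpha<\cc}K_\alpha$ gives $w(K)\le\cc$ and a surjection onto $L_0$, so $\XX\hookrightarrow C(K)$ isometrically.

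The core of the argument --- and the step I expect to be the main obstacle --- is the operator-killing extension. Given $K_\alpha$ and an operator $T\colon C(K_\alpha)\to C(K_\alpha)$ that is not, even approximately, a weak multiplication, one analyses the $w^*$-continuous map $t\mapsto\mu_t:=T^*\delta_t$ into $M(K_\alpha)$: the failure of the weak-multiplication form is witnessed by points $t$ for which $\mu_t$ oscillates away from $t$, i.e.\ $C(K_\alpha)$ contains no function separating $t$ from the relevant part of the support of $\mu_t$. Over each such $t$ one adjoins a new point of $K_{\alpha+1}$ realising such a separation, so that $T$ --- and every operator on the final $C(K)$ extending it --- is forced to be a weak multiplication. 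The delicate point is that this surgery introduces only countably many new coordinates yet must ``capture all of'' the measures $\mu_t$; this works precisely because the $\mu_t$ have countable Maharam type, which is exactly what the invariant $\ell_1(\cc)\not\hookrightarrow C(K_\alpha)$ guarantees --- and that invariant is available only because $\XX$ itself, trapped inside $C(K)$, contains no $\ell_1(\cc)$, equivalently has no quotient isomorphic to $\ell_\infty$. So the main difficulty is to push the measure-theoretic surgery of \cite{few} through \emph{relative to an arbitrary, given, non-metrizable base} $L_0=B_{\XX^*}$: one must check that every point of $L_0$ can be split and that the oscillation analysis survives the permanent presence of $\XX$ inside all the $C(K_\alpha)$ --- this relativisation being the ``considerable modification'' of \cite{few} advertised in the abstract. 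A related preliminary point that also needs care is to verify that $C(B_{\XX^*})$ inherits the absence of $\ell_1(\cc)$ from $\XX$, so that the invariant already holds at the base.
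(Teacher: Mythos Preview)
Your approach diverges substantially from the paper's, and the divergence is not merely cosmetic: the introduction explicitly discusses why a direct relativisation of the \cite{few} machinery along the lines you sketch does not go through here, and replaces it with different tools.

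The paper does not build $K$ as an inverse limit of simple extensions over $B_{\XX^*}$. Instead it fixes the ambient algebra $C(M)$ with $M=(B_{\XX^*}\times\R^2)^*$ a \v Cech--Stone remainder (an almost $P$-space and $F$-space), and constructs a closed subalgebra $\A\subseteq C(M)$ containing $\{f\circ\pi^*:f\in C(B_{\XX^*})\}$; then $K=\nabla\A$. The resulting $C(K)$ is shown to have only \emph{few$^*$} operators, not few operators in the form $T=gI+S$ with $g\in C(K)$; the paper says it does not know how to obtain the latter in this setting. Consequently your clean indecomposability argument (which needs only connectedness of $K$) is unavailable; one needs $K\setminus F$ connected for every finite $F$, and this is achieved by a new ``abundant'' condition (Definition~\ref{def-abundant}, Lemma~\ref{lemma-abundant-connected}) that exploits the planar geometry of the $\R^2$ factor in $M$.

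The genuine gap is your operator-killing step. You assert that the invariant $\ell_1(\cc)\not\hookrightarrow C(K_\alpha)$ forces all Radon measures $\mu_t=T^*\delta_t$ to have countable Maharam type, and that this is what makes the surgery work. Neither claim matches the paper, and the implication from ``no $\ell_1(\cc)$'' to ``all measures separable'' is not a theorem you can quote. In the paper the hypothesis on $\XX$ is used solely in the form ``$\beta\N\not\hookrightarrow B_{\XX^*}$'' (Proposition~\ref{talagrand}), and it enters at one very specific place: given pairwise disjoint cozero sets $(V_n)$ coming from the current intermediate algebra, one must find an infinite $B\subseteq\N$ and countable sets $D\subseteq\bigcup_{n\in B}V_n$, $E\subseteq\bigcup_{n\notin B}V_n$ that the algebra does \emph{not} separate (Lemma~\ref{new-promise}); this is what feeds the non-separation bookkeeping that eventually yields few$^*$ via Proposition~\ref{few-star}. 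The actual engine is a pair of tools absent from your sketch: submorphisms $\phi:\wp(\N)\to C_1(M)$ (needed because $C(M)$ is not lattice-complete, unlike the extremally disconnected ambients of \cite{few,plebanek-few}) and a strengthened Haydon lemma (Lemma~\ref{haydon}) that, unlike Haydon's original, tolerates an algebra of density $\cc$ already present---namely $C(B_{\XX^*})$ itself. That strengthening is the ``considerable modification'' advertised in the abstract, and it is precisely what lets the construction carry the fixed nonseparable base through all $\cc$ stages.
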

\begin{proof} By a theorem of Talagrand (Proposition \ref{talagrand}) the
hypothesis on $\XX$ implies that the dual ball
$B_{\XX^*}$ considered with the weak$^*$ topology does not
admit a subspace homeomorphic to the \v Cech-Stone compactification $\beta\N$ 
of $\N$.  So, Proposition \ref{cor-topo} implies that $C(B_{\XX^*})$ isometrically
embeds into an  indecomposable Banach space of density at most $\cc$ of the form $C(K)$.
Since $\XX$ isometrically embeds into $C(B_{\XX^*})$, we obtain the theorem.
\end{proof}

As $\beta\N$ does not admit any nontrivial convergent sequence,
 it follows from our result that if  the dual ball $B_{\XX^*}$ of a Banach space $\XX$
of density at most $\cc$
considered with the weak$^*$ topology has some reasonable sequential properties,
then $\XX$ is a subspace of an indecomposable Banach space. This includes
all Asplund Banach spaces or weakly Lindel\"of determined Banach spaces
since their dual balls being Radon-Nikodym compacta or Corson compacta are sequential
(5.4 of \cite{namioka}, 1.6 of \cite{kalenda}).

Our Theorem \ref{main} does not resolve  Question \ref{main-question} entirely, even
for the spaces of density at most $\cc$.
Certainly not all Banach spaces can be subspaces of indecomposable Banach spaces.
For sure, Banach spaces which contain an isomorphic copy of $\ell_\infty$ cannot, because
$\ell_\infty$ is complemented in every superspace and has many decompositions. This  class of spaces
includes all injective Banach spaces by a result of Rosenthal (Corollary 3 of \cite{rosenthal}). 
However, there are many Banach spaces which admit $\ell_\infty$ as their quotient
but not as their subspace. We know that some
of them are subspaces of indecomposable Banach spaces, for example we obtain:

\begin{theorem}\label{ind-quotient} There is an indecomposable Banach space of the form
$C(K)$ for $K$ compact and Hausdorff such that $\beta\N$ is homeomorphic to
a subspace of $K$, and so $\ell_\infty$ is isometric to a quotient of $C(K)$.
\end{theorem}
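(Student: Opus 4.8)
The plan is to build $C(K)$ by essentially the transfinite construction underlying Proposition~\ref{cor-topo} (itself a modification of the few-operators construction of \cite{few}), but starting from a ground space into which a nowhere dense copy of $\beta\N$ has been inserted once and for all. The point is that here we need not make room for a whole space $C(L)$: we only need $\beta\N$ as a single nowhere dense subspace of $K$, which leaves enough freedom to push all the ``operator-killing'' surgery away from it.

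The ``and so'' clause is essentially free. If $\beta\N$ is homeomorphic to a subspace of $K$ then, $\beta\N$ being compact and $K$ Hausdorff, that copy is closed in $K$; by the Tietze extension theorem the restriction operator $C(K)\to C(\beta\N)$ is norm one and maps the closed unit ball of $C(K)$ onto that of $C(\beta\N)$ (truncate any continuous extension at $\pm1$), so $\ell_\infty=C(\beta\N)$ is isometric to a quotient of $C(K)$. Everything thus reduces to producing an indecomposable $C(K)$ whose $K$ contains a closed copy of $\beta\N$. To set this up I would fix at the outset a connected compactum $K_0$ containing a nowhere dense closed copy $D$ of $\beta\N$; concretely one may take $K_0=[0,1]^{\mathcal{P}(\N)}$ and let $D$ be the closure of the points $x_n\in\{0,1\}^{\mathcal{P}(\N)}$, $n\in\N$, defined by $(x_n)_A=1$ if $n\in A$ and $(x_n)_A=0$ otherwise, so that $D$ is a homeomorphic copy of $\beta\N$ lying inside the nowhere dense set $\{0,1\}^{\mathcal{P}(\N)}$. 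Then I would run the recursion of \cite{few} and of the proof of Proposition~\ref{cor-topo}: a continuous inverse system of connected compacta $(K_\alpha)$ with irreducible bonding maps, whose successor steps destroy every would-be nontrivial operator on the limit space and so force every operator on the eventual $C(K)$ to be a weak multiplication $gI+S$ with $S$ in the relevant small class; the single additional requirement is that each bonding map restrict to a homeomorphism between the distinguished copies of $\beta\N$ and send no further point of its domain into that copy. In the resulting inverse limit $K$ one then has: $K$ is connected, being an inverse limit of connected compacta; the inverse limit of the distinguished copies is a closed nowhere dense copy of $\beta\N$ (here one uses irreducibility of the bonding maps); and $C(K)$ has the weak-multiplier property. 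As in \cite{few}, connectedness of $K$ forces the multiplier $g$ of any projection $P=gI+S$ to be $\{0,1\}$-valued and hence constant, and then $P$ has finite rank or finite corank (a complemented reflexive subspace of a $C(K)$ space is finite dimensional), so $C(K)$ is indecomposable.

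The step I expect to be the main obstacle is checking that the operator-killing surgeries of \cite{few} can be carried out entirely inside the open dense connected set $K_\alpha\setminus D$, never disturbing the distinguished copy of $\beta\N$, and that this nonetheless suffices to secure the weak-multiplier property of $C(K)$. The reason it should go through is that $D$, being nowhere dense, carries none of the bulk of $C(K)$ --- every $f\in C(K)$ is determined by its restriction to the dense set $K\setminus D$ --- so $D$ cannot support a ``bad'' operator that surgery confined to the complement would miss: a witness to an operator $T$ failing to be a weak multiplication can be localized inside small open subsets of $K$ and, $D$ being nowhere dense, chosen to avoid $D$, so that it is destroyed by surgery inside $K_\alpha\setminus D$. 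Making this rigorous amounts to re-running the verification of the weak-multiplier property from \cite{few} and from the proof of Proposition~\ref{cor-topo} in the presence of the extra closed extremally disconnected piece $D$, together with the usual closing-off bookkeeping (via elementary submodels, since there may be as many as $2^{\cc}$ operators to handle) guaranteeing that every operator on the final $C(K)$ is eventually dealt with.
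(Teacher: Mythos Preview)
Your approach is entirely different from the paper's, and the paper's is far shorter. The paper does not construct a new space: it takes the $C(K)$ of Theorem~5.1 of \cite{few}, which is already indecomposable, and observes that by that theorem every pairwise disjoint sequence in $C_1(K)$ has an infinite subsequence whose supremum exists in $C_1(K)$. This is stronger than the \emph{weak subsequential separation property} of Definition~\ref{def-ssp}, and Proposition~\ref{weak-subseq-prop} (Section~7, adapting an argument of \cite{alg-universalis} from Boolean algebras to function spaces) shows that this property already forces $K$ to contain a copy of $\beta\N$. The philosophical point is that the operator-killing mechanism of \cite{few} \emph{manufactures} exactly the subsequential suprema that embed $\beta\N$; there is no tension between indecomposability and the presence of $\beta\N$ to be managed by hand.

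Your sketch, by contrast, has a genuine gap at precisely the place you flag. In the framework of this paper the operator-killing step rests on Lemma~\ref{new-promise}, whose hypothesis is that the Gelfand space of the current intermediate algebra does \emph{not} contain $\beta\N$; this is how one obtains the unseparated countable sets $D_\xi,E_\xi$ feeding condition~(d)(ii) of Proposition~\ref{few-star} (see the fourth bullet in the proof of Proposition~\ref{proposition-main}). If you begin with a $K_0$ already carrying a copy $D\cong\beta\N$ and require every bonding map to be a homeomorphism on $D$, then each intermediate stage maps onto $\beta\N$ and Lemma~\ref{new-promise} is unavailable from step one. Your heuristic that witnesses to a non-weak-multiplier $T$ can be ``localized away from $D$'' is not substantiated: the points $x_n$ and the sets $V_m$ in the proof of Proposition~\ref{few-star} are dictated by where $|T(f_n)|$ is large, and nothing prevents this from accumulating on $D$. (Two smaller inaccuracies: the constructions in \cite{few} and in Proposition~\ref{cor-topo} are subalgebra constructions inside an ambient $C(M)$, not inverse-limit surgeries with irreducible bonding maps in the geometric sense you describe; and no elementary-submodel bookkeeping is needed, since one enumerates the $\cc$-many relevant pairs of disjoint sequences rather than the operators themselves.)
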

\begin{proof}
    Let $K$ be the compact space from Theorem 5.1 of \cite{few}. 
    Then $C(K)$ is indecomposable, so we will be done if we show that  $K$ admits  a 
    subspace homeomorphic to $\beta\N$.

    By Proposition \ref{weak-subseq-prop},
    it is enough to show that $C(K)$ has the weak subsequential separation property of Definition \ref{def-ssp}.
    But this is true, as by Theorem 5.1 of \cite{few},
    the space $C(K)$ has a stronger property that for every pairwise disjoint sequence 
    $(f_n)_{n \in \N}$ in $C_1(K)$ there is infinite $M \subseteq \N$
    such that $\sup_{n \in M} f_n$ exists in $C_1(K)$. Here, as elsewhere in the paper,
    $C_1(K)$ stands for the set of all functions in $C(K)$ which have their values in $[0,1]$.
\end{proof}

As seen from the proof  above, Theorem \ref{ind-quotient} is witnessed already by one of the spaces constructed in \cite{few}.
To prove it we generalize a result of \cite{alg-universalis} providing an interesting by itself  sufficient condition
for a compact $K$ to contain a copy of $\beta\N$ (Propositions \ref{weak-subseq-prop} and \ref{subseq-top}).
In fact, assuming Martin's axiom and the negation of the continuum hypothesis
every indecomposable Banach space of the form $C(K)$ admits $\ell_\infty$
as a quotient. This follows from the result of \cite{hlo} and the fact that
not being  Grothendieck for a space of the form $C(K)$ is equivalent to
admitting a complemented copy of $c_0$ (\cite{schachermayer}).
On the other hand, we do not know any example
of  a Banach space not containing an isomorphic copy
of $\ell_\infty$ which cannot be a subspace of an indecomposable Banach space.
So the following question formulated by S. Argyros and R. Haydon remains open:

\begin{question}[\cite{argyros-haydon} p. 1502] Is every Banach space 
not containing $\ell_\infty$ a subspace of an indecomposable Banach space?
\end{question}

We do not know the answer either in the case of Banach spaces of  density at most $\cc$.
A relevant example of a Banach space not containing $\ell_\infty$
but admitting it as many of its  quotients  could be  the space of the
form $C(K)$ where $K$ is the compact $F$-space  form \cite{antonio-pk} which exists under Martin's axiom
and the negation of the continuum hypothesis. We do not know if it can
be a subspace of an indecomposable Banach space or even a subspace
an indecomposable $C(K)$-space.

Let us comment on the methods we use to construct the indecomposable space of  Theorem \ref{main}
and at the same time lets us discuss the structure of the paper. 
For this let us recall the terminology of \cite{superfew}: A Banach space
of the form $C(K)$ is said to have few$^*$ operators if for every linear bounded operator $T$ on it
 the adjoint operator $T^*$
on the dual $C(K)^*$ (identified with the space of Radon measures on $K$) 
is of the form $gI+S$, where $g: K\rightarrow \R$ is
a Borel function and $S$ is a weakly compact operator on $C(K)^*$.
A Banach space
of the form $C(K)$ is said to have few operators if  every linear bounded operator $T$ on it
 is of the form $fI+S$, where $f\in C(K)$ and $S$ is a weakly compact on $C(K)$. In both definitions
 $I$ stands for the identity operator on $C(K)^*$ and on $C(K)$ respectively.
 It was proved in Theorem 2.5 of \cite{few} that if  $K\setminus F$ is connected
 for all finite $F\subseteq K$ and the  space $C(K)$ has few$^*$ operators, then it is indecomposable, and it follows by 
 a simpler argument (cf. Theorem 12.3 of \cite{fewsur}) that 
  if $K$ is connected and $C(K)$ has few operators, then it is indecomposable.
 
Our indecomposable Banach space of the form $C(K)$ from Theorem \ref{main} has few$^*$ operators
and we do not know if such space can be obtained to have few operators. 
All constructions of  spaces with few operators  in the above sense in the literature
take place as constructions of subalgebras  with few$^*$ operators in some ambient 
algebras  like $\ell_\infty$ in \cite{few}, $L_\infty(\{0,1\}^\cc)$ in \cite{plebanek-few}
or $C(G(\{0,1\}^\cc))$, where $G(\{0,1\}^\cc)$ is the Gleason space of $\{0,1\}^\cc$, in \cite{superfew}.
All these spaces are of the form $C(M)$ for $M$ extremally disconnected and c.c.c.
The c.c.c is used very much to pass from few$^*$ to few operators (in \cite{few} only under {\sf CH}).
Such spaces cannot
contain, for example, the space $c_0(\cc)$ so are less interesting for us  in the context of embedding
many spaces of density $\mathfrak c$. So we rather adopt
the approach of \cite{antonio} where the ambient space is $C(\beta\N\setminus\N)$,
however for the same reasons as in \cite{antonio}, in the absence of the c.c.c.  we are able to
obtain only spaces with few$^*$ operators and not with few operators.

To embed a given Banach space $\XX$ into a space of the form $C(K)$ with few$^*$ operators 
instead of $C(\beta\N\setminus \N)$ we use as the ambient space $C(M)$, where
$M$ is the \v Cech-Stone remainder of $B_{\XX^*}\times\R^2$, that is
$\beta(B_{\XX^*}\times\R^2)\setminus B_{\XX^*}\times\R^2$, where $B_{\XX^*}$
is the dual ball of $\XX$ with the weak$^*$ topology. Such compact
space maps continuously onto $B_{\XX^*}$ and so $\XX$ isometrically
embeds into $C(M)$. Like in the case of $\beta\N\setminus\N$, our $M$
is an almost P-space, realcompact and F-space, so some methods of \cite{antonio} can be generalized.
Sections 3 and 4 are devoted to developing these techniques.
The disadvantage of working with such spaces
as opposed to extremally disconnected spaces is that 
$C(M)$ is not a complete lattice, so we do not always have suprema of bounded sets.
One thus needs to use submorphisms (Subsection 4.2)  as in \cite{antonio} which can serve in a similar way to
the use of lattice completeness. In particular, we obtain
a generalized version of Theorem 3.1 of \cite{antonio} in Proposition \ref{few-star}
which provides a sufficient condition for a Banach space of the form $C(K)$
to have few$^*$ operators in the contexts of $K$'s like connected \v Cech-Stone reminders.

To build a subalgebra of $C(M)$ of the form $C(K)$ which is indecomposable and moreover contains a copy
of a given $\XX$ we need  $C(K)$  to satisfy the requirements of Proposition
\ref{few-star}, we need $K\setminus F$ to be connected for every finite $F\subseteq K$ and
moreover we need to include a given ahead $\XX$ in $C(K)$.  This challenge 
has not been yet addressed in the literature. Here we propose new methods.
First, the inductive construction of $C(K)$ becomes possible  due to
 the hypothesis that $B_{\XX^*}$ does not contain $\beta\N$ and
a  strengthening of a method of Haydon \cite{haydon}
needed for the inductive step of our construction.
The main difference is that the  algebra $\A$ of our Lemma \ref{haydon} (which contains,
in the final construction, the algebra $C(B_{\XX^*})$) may have density $\cc$,
while the Boolean algebra of Lemma 1D of \cite{haydon} needs to have cardinality smaller than $\cc$.
The second challenge is to deal with the connectedness requirement in this context.
For this we use a completely new approach, namely the high level of the connectedness
of the \v Cech-Stone remainder $M$ of $B_{\XX^*}\times\R^2$. It
turns out that if a subalgebra of the form $C(K)$ of
$C(M)$ is large enough (we call it abundant, see Definition \ref{def-abundant}), then
$K\setminus F$ is connected for every finite $F\subseteq K$ (Lemma \ref{lemma-abundant-connected}) as required for
the indecomposability of $C(K)$ in the case of few$^*$ operators.
When the ambient spaces in \cite{few}, \cite{plebanek-few} and \cite{superfew} were 
of the form $C(M)$ for $M$ extremally disconnected we needed to build
a ``small'' subalgebra $C(K)$ to achieve the connectedness of $K$. Here
 the ambient space is $C(M)$ for $M$ connected, and so we need to build its  ``big'' subalgebra $C(K)$ 
to achieve the connectedness of $K$.

The results dealing with the connectedness are developed in Section 5. Section 6 is devoted to
the actual construction of the indecomposable space of Theorem \ref{main}. Section 7 includes
the results needed for Theorem \ref{ind-quotient}. In the following Section 2 we discuss 
the terminology and notation which we use in the paper.

At the end of the introduction let us mention some natural questions related to Question \ref{main-question}.
Our Theorem \ref{main} produces possibly distinct indecomposable Banach spaces for
each  Banach space whose dual ball does not admit
a subspace homeomorphic to $\beta\N$. We do not know if it is possible to have  some 
types of universal spaces with these properties:

\begin{question} Let  $\kappa$ be an uncountable cardinal. Let
$\mathcal{SI}_\kappa$ be the class of all Banach spaces 
which are subspaces of some indecomposable Banach spaces of density $\kappa$.
 Is it consistent that there is an indecomposable Banach space of density $\kappa$ which admits
subspaces isomorphic to all members of $\mathcal{SI}_\kappa$?
\end{question}

We included the possibility of the consistency only, because at least in the
case of $\kappa=\omega_1$ or $\kappa=\cc$ it is consistent that
there is no Banach space of density $\kappa$ which contains isomorphic copies of all
Banach spaces of density $\kappa$ whose dual balls 
with the weak$^*$ topology are uniform Eberlein compact spaces (\cite{christina}) and so
in particular are Corson compacta.
As we noted such Banach spaces, at least for $\kappa=\cc$, are in $\mathcal SI_\cc$. 

\begin{remark}\label{noe} There is no separable indecomposable Banach space which admits 
as subspaces all Banach spaces which are subspaces of separable indecomposable Banach spaces.
This follows from the fact that subspaces of separable indecomposable Banach spaces
include all separable reflexive spaces by \cite{argyros-r} but a separable Banach space $\mathcal X$  which
contains all separable reflexive Banach spaces must contain all separable Banach spaces by \cite{bourgain},
hence $\mathcal X$ must contain $c_0$ and so is not indecomposable.
\end{remark}

\section{Notation and terminology}

All topological spaces in this paper are Hausdorff. In particular, compact spaces or regular spaces are
meant to be Hausdorff.
If $X$ is a topological space, $C(X)$ will denote the vector space 
of real-valued continuous functions on $X$, 
$C_b(X)$ will denote the Banach space of bounded real-valued continuous functions 
on $X$ with the supremum norm, $C_1(X)$ will denote its subset consisting
 of functions whose range is a subset of $[0,1]$, 
 and $C_0(X)$ will be the norm closure of the set of 
 compactly supported functions in $C_b(X)$. 
 If $K$ is compact Hausdorff, we clearly have 
 $C_b(K) = C_0(K) = C(K)$. 
 Each of the sets $C(X)$, $C_b(X)$, 
 $C_0(X)$ and $C_1(X)$ is a lattice when equipped with the 
 pointwise maximum and minimum, 
 i.e. $(f \vee g) (t) = \max(f(t),g(t))$ and $(f \wedge g)(t) = \min(f(t),g(t))$. Moreover,
  the lattice $C_1(X)$ has the least element $0$. If $\A\subseteq C(X)$, then
  $(\A)_1=\A\cap C_1(X)$.
  
  For a real-valued function $f$ we denote its cozero set by $\coz f = f^{-1}[\R \setminus \{0\}]$ 
and its support by $\supp f = \overline{\coz f}$. If $\A$ is a family of functions, 
we set $\coz (\A) = \{\coz f: f \in \A\}$. For a compact Hausdorff space $K$ 
we write $\coz(K) = \coz(C(K))$. Note that $\coz (K)$ is closed under countable unions.
We say that two functions $f_1$ and $f_2$
  with the same domain are disjoint if 
  $\coz f_1 \cap \coz f_2 = \emptyset$. 
  Note that we only require disjointness of the cozero sets,
  not of the supports. It is easy to see that if we have finitely many
   pairwise disjoint non-negative functions $f_1,\dots,f_k$ then $f_1 \vee \cdots \vee f_n = \sum_{i=1}^n f_i$.
  
  If $X$ is a Hausdorff completely regular space (a Tichonov space), 
we write $\beta X$ for the \v Cech-Stone compactification of $X$, 
and $X^* = \beta X \setminus X$ for the \v Cech-Stone remainder of $X$. 
We also denote by $\beta: C_b(X) \rightarrow C(\beta X)$ the \v Cech-Stone 
extension operator which maps every bounded continuous function on $X$ to its 
(uniquely determined) extension on $\beta X$. The extension operator $\beta$ 
is a linear onto isometry and a lattice isomorphism. Indeed, for any $x \in X$ 
and $f,g \in C_b(X)$ we have $(\beta(f \vee g))(x) = (f \vee g)(x) = f(x) \vee g(x) = 
(\beta f)(x) \vee (\beta g)(x)$. By the continuity and by the density the same holds for 
all $x \in \beta X$ and $\beta(f \vee g) = (\beta f) \vee (\beta g)$. 
The other properties are proved analogically. 
For more information about \v Cech-Stone compactification see e.g. Chapter 6 of \cite{gj}.

We consider all algebras to be unital and over the field of real numbers
as we will be applying them in the Banach space theory. In particular, if $\A$ is a subset of $C(K)$ for some compact Hausdorff $K$,
 then the subalgebra of $C(K)$ generated by $\A$ will be always assumed to contain constant functions. 
 Unless specifically mentioned, we do not assume subalgebras of $C(K)$ to be closed  in the norm metric.
  If $\A$ is a subalgebra of $C(K)$ and $f \in C(K)$, we denote by $\A \langle f \rangle$ the
   algebra generated by $\A \cup \{f\}$. Any element of $\A \langle f \rangle$
    is of the form $g_0 + \sum_{i=1}^m g_i \cdot f^i$ for some $g_0,\dots,g_m \in \A$.

The cardinality of a set $A$ is denoted as $|A|$.
If $A$ is a set, we denote by $\wp(A)$ the power set of $A$, that is the set of all subsets of $A$. 
We use $\cc$ to denote the cardinality of continuum, that is the cardinality of $\wp(\N)$.
 If $f: A\rightarrow A'$ is a function
and $B\subseteq A$, then $f|B$ denotes the restriction of $f$ to the set $B$. 
We use both symbols $\omega$ and $\N$ to denote the smallest infinite ordinal. 
As usual we identify $k\in \N$ with the set $\{0, \dots, k-1\}$, so we will
sometimes use $A\cap k$ or $A\setminus k$ for $A\subseteq\N$ and $k\in \N$.

We will work with the sets $\{0,1\}^{<\omega} = \bigcup_{n \in \N} \{0,1\}^n$ and $\{0,1\}^\N$.
Note that if $n\in \N$ and $\sigma\in \{0,1\}^\N$ or $s\in \{0,1\}^{<\omega}$ satisfies $n\leq|s|$ ($|s|$
is the cardinality of $s$ as a set, i.e. the length of the sequence $s$),
then $s|n, \sigma|n\in \{0,1\}^n$.  By $(0)$ and $(1)$ we will denote 
elements of $\{0,1\}^1$ assuming values $0$ or $1$ respectively.
  If $s \in \{0,1\}^{n}$ and $t \in \{0,1\}^{m} $ for $n, m\in \N$, 
  we denote by $s^\frown t\in \{0,1\}^{n+m}$ the concatenation of $s$ and $t$, that is
  $(s^\frown t)(i)=s(i)$ if $i<n$ and $(s^\frown t)(i)=t(i-n)$ if $n\leq i<n+m$.
  In particular we will often use $s^\frown(0)$ or $s^\frown(1)$.

Banach spaces that will be of our interest are the space $C(K)$ of continuous functions on 
a compact Hausdorff space $K$ with the supremum norm, the space $\ell_\infty$ of bounded
 functions on $\N$ with the supremum norm, the space 
 $\ell_1(\kappa) = \{(x_i)_{i \in \kappa} \in \R^\kappa: \sum_{i \in \kappa} |x_i| <\infty\}$ 
 equipped with the norm $\|(x_i)_{i \in \kappa}\| = \sum_{i \in \kappa} |x_i|$ (where $\kappa$ is a cardinal), 
 and the spaces of bounded linear operators between these spaces with the operator norm. 
 We will use $\| \cdot \|$ to denote any norm, the ambient Banach space on which this norm is 
 defined should be understood from the context. If $A$ is a set and $f: A \rightarrow \R$
  is a function we write $\|f\|_\infty = \sup_{a \in A} |f(a)|$. 
  We use this notation even in the cases where $\|\cdot\|_\infty$ is not a norm.
If $\XX$ is a Banach space, we denote by $\XX^*$ its dual. 
The space $\XX^*$ can be equipped with the locally convex weak$^*$ topology.
We will be particularly interested in the dual of $C(K)$, for compact Hausdorff $K$, 
which can be identified with the space $M(K)$ of signed Radon measures on $K$ with the variation norm.
 Otherwise we use standard Banach space terminology and notation as in \cite{fabian-etal}.
 
 All operators from one Banach space to another are meant to be linear and bounded.
 If $T:\XX\rightarrow\Y$ is such an operator for Banach spaces $\XX, \Y$, by
 $T^*:\Y^*\rightarrow\XX^*$ we denote the adjoint of $T$.
 If the Banach spaces are closed algebras of functions we consider
 also multiplicative operators, i.e., the ones which preserve the multiplication of functions.
 
 Note that following standard conventions the symbol $*$ is abused: $X^*$, $\XX^*$ and $T^*$
 denote the \v Cech-Stone compactification of the completely regular topological space $X$, 
 the dual space of the Banach space $\XX$ and
 the adjoint of the operator $T$ respectively. As usual we hope to avoid confusion by using
 different fonts and diferent collections of letters.

\section{Algebras of functions and their Gelfand spaces}

In this section, $K$ will denote a compact Hausdorff space.
 Let us recall that we use the convention that every algebra is assumed to be unital
 and over the field of real numbers.

\subsection{Gelfand spaces and maximal sets}

The aim of this subsection is
to summarize some information about the Gelfand space $\nabla \A$ associated
with a subalgebra $\A\subseteq C(K)$, where $K$ is compact and Hausdorff. This corresponds to a general construction
in the theory of $C^*$-algebras, but we present it in the context of real-valued
algebras $C(K)$ as we will use it in this paper.

\begin{proposition}\label{stone}$ $
\begin{enumerate}
\item  {\rm [4.2.2, 4.2.3 \cite{semadeni}]} If $K, L$ are compact and Hausdorff and $\phi: K\rightarrow L$ is
a continuous surjection, then $T_\phi: C(L)\rightarrow C(K)$ defined
by $T_\phi(f)=f\circ \phi$ for $f\in C(L)$ is a multiplicative isometric linear
embedding.
\item {\rm [M. H. Stone (7.5.2 \cite{semadeni})]} Suppose that $K$ is a compact Hausdorff space and $\A$ is a
closed subalgebra of $C(K)$. Then there is a compact Hausdorff $L$ and
a continuous surjection $\phi: K\rightarrow L$ such that 
$$\A=\{f\circ\phi: f\in C(L)\}.$$
In particular, by (1), the algebras $\A$ and $C(L)$ are isomorphic.
\end{enumerate}
\end{proposition}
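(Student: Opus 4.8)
The plan is to handle (1) by a routine verification and to reduce (2) to the Stone--Weierstrass theorem, producing $L$ explicitly as a continuous image of $K$ sitting inside a product of compact intervals. For (1) I would just check the three properties. Linearity and multiplicativity of $T_\phi$ follow from the pointwise identities $(f+g)\circ\phi=f\circ\phi+g\circ\phi$, $(\lambda f)\circ\phi=\lambda(f\circ\phi)$ and $(fg)\circ\phi=(f\circ\phi)(g\circ\phi)$, while $f\circ\phi$ is continuous as a composition of continuous maps, so $T_\phi$ indeed sends $C(L)$ into $C(K)$. Surjectivity of $\phi$ gives $\phi[K]=L$, hence $\|T_\phi f\|_\infty=\sup_{x\in K}|f(\phi(x))|=\sup_{y\in L}|f(y)|=\|f\|_\infty$, so $T_\phi$ is an isometry and in particular injective, which is all that ``embedding'' requires.

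For (2), given a closed (unital) subalgebra $\A\subseteq C(K)$, I would first build $L$ and $\phi$. For $g\in\A$ put $I_g=[-\|g\|_\infty,\|g\|_\infty]$ and let $e\colon K\to\prod_{g\in\A}I_g$ be the evaluation map $e(x)=(g(x))_{g\in\A}$, which is continuous because each of its coordinates is. Since $\prod_{g\in\A}I_g$ is compact Hausdorff, the image $L:=e[K]$ is a compact Hausdorff space, and the corestriction $\phi\colon K\to L$ of $e$ is a continuous surjection. Denoting by $\pi_g\colon L\to\R$ the restriction to $L$ of the $g$-th coordinate projection, one has $\pi_g\circ\phi=g$, so $g=T_\phi(\pi_g)\in T_\phi[C(L)]$ for every $g\in\A$; this already gives the inclusion $\A\subseteq T_\phi[C(L)]$.

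The heart of the proof is the reverse inclusion, and here I would use Stone--Weierstrass. The functions $\pi_g$ ($g\in\A$) separate the points of $L$, since $e(x)\ne e(x')$ forces $g(x)\ne g(x')$ for some $g\in\A$; hence the unital subalgebra $\mathcal D\subseteq C(L)$ generated by $\{\pi_g:g\in\A\}$ is dense in $C(L)$. Since $T_\phi$ is a unital algebra homomorphism sending each $\pi_g$ into $\A$, it sends $\mathcal D$ into $\A$; and since $T_\phi$ is an isometry of the complete space $C(L)$ into $C(K)$, its range is closed, so $T_\phi[C(L)]=T_\phi[\overline{\mathcal D}]=\overline{T_\phi[\mathcal D]}\subseteq\overline{\A}=\A$. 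Combining the two inclusions yields $\A=\{f\circ\phi:f\in C(L)\}$, and the concluding ``in particular'' is then immediate from (1), by which $T_\phi$ restricts to a linear multiplicative isometric bijection of $C(L)$ onto $\A$.

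I do not anticipate a genuine obstacle: the only points that really matter are that the closedness hypothesis on $\A$ is essential (dropping it leaves only $\overline{\A}=T_\phi[C(L)]$) and that the standing convention that subalgebras contain the constant functions is exactly what makes $\mathcal D$ a unital point-separating subalgebra, so that Stone--Weierstrass applies. If one prefers to avoid embedding $L$ into a product, an equivalent route is to take $L=K/\!\sim$, where $x\sim y$ if and only if $g(x)=g(y)$ for all $g\in\A$, equipped with the quotient topology and with $\phi$ the quotient map; the very same Stone--Weierstrass argument then identifies $C(L)$ with $\A$.
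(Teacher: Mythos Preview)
Your proof is correct. The paper does not actually prove this proposition; it merely cites the relevant sections of Semadeni's book, so there is no ``paper's own proof'' to compare with beyond the reference. Your argument is the standard one, and in fact your evaluation map $e\colon K\to\prod_{g\in\A}I_g$ with image $L$ is precisely the construction the paper introduces immediately afterwards as $\Pi_\A\colon K\to\R^\A$ with image $\nabla\A$ (Definition~\ref{definition-nabla}), and your Stone--Weierstrass step is exactly what underlies Proposition~\ref{proposition-Pi}(1).
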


\begin{definition} \label{definition-nabla}
Suppose that $K$ is a compact Hausdorff space and $\A$ is a nonempty subfamily of $C(K)$. We define a mapping
$\Pi_\A: K \rightarrow \R^{\A}$ by
$$(\Pi_\A(x))(f)=f(x)$$
for all $f\in \A$ and $x\in K$ and we define $\nabla\A$ to be the image of
$K$ under $\Pi_\A$.
\end{definition}

In the following proposition we point out some properties of the mapping $\Pi_\A$.

\begin{proposition} \label{proposition-Pi}
    Suppose that $K$ is a compact Hausdorff space and $\A$ is a nonempty subset of $C(K)$. Then:
    \begin{enumerate}
        \item The induced composition operator  $T_{\A}: C( \nabla \A) \rightarrow C(K)$, 
        $(T_{\A} (f))(x) = (f\circ\Pi_\A) (x)$ is an isometric multiplicative linear embedding 
        whose image is the closed Banach subalgebra of $C(K)$ generated by $\A$.
        \item If $\A$ is a closed subalgebra of $C(K)$, then $\A$ is multiplicatively
        isometrically isomorphic to $C(\nabla\A)$.
        \item If $\mathcal{B}$ is the closed algebra generated by $\A$, then the canonical
        projection $\pi_{\A, \B}: \R^\B\rightarrow \R^\A$ restricted to $\nabla\B$ is
        a homeomorphism between
        $\nabla \B$ and $\nabla \A$.
    \end{enumerate}
\end{proposition}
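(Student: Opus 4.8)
The plan is to establish the three parts in order, using the Stone-Weierstrass theorem and the basic universal property of the product map $\Pi_\A$. First I would treat $\nabla\A$ carefully as a topological space: since $K$ is compact and each $f\in\A$ is continuous, $\Pi_\A\colon K\to\R^\A$ is continuous, hence $\nabla\A=\Pi_\A[K]$ is a compact Hausdorff subspace of $\R^\A$ (Hausdorffness is automatic in a product of Hausdorff spaces). For part (1), I would observe that $T_\A(f)=f\circ\Pi_\A$ is clearly linear and multiplicative, and it is an isometry because $\Pi_\A$ is onto $\nabla\A$, so $\|f\circ\Pi_\A\|_\infty=\sup_{x\in K}|f(\Pi_\A(x))|=\sup_{y\in\nabla\A}|f(y)|=\|f\|_\infty$. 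In particular $T_\A$ has closed range, being an isometric image of the complete space $C(\nabla\A)$. It remains to identify that range. For each $f\in\A$, the $f$-th coordinate projection $\rho_f\colon\R^\A\to\R$ restricted to $\nabla\A$ is continuous, and $T_\A(\rho_f|\nabla\A)=\rho_f\circ\Pi_\A=f$; thus $\A\subseteq\mathrm{ran}(T_\A)$, and since $\mathrm{ran}(T_\A)$ is a closed subalgebra (it contains the constants because $C(\nabla\A)$ does), it contains the closed subalgebra $\B$ generated by $\A$. Conversely, the coordinate functions $\rho_f|\nabla\A$ ($f\in\A$) separate the points of $\nabla\A$ by the very definition of $\Pi_\A$: if $\Pi_\A(x)\neq\Pi_\A(x')$ then $f(x)\neq f(x')$ for some $f\in\A$. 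So by the Stone-Weierstrass theorem the closed subalgebra of $C(\nabla\A)$ generated by $\{\rho_f|\nabla\A: f\in\A\}$ together with the constants is all of $C(\nabla\A)$; applying the isometric algebra homomorphism $T_\A$ shows $\mathrm{ran}(T_\A)\subseteq\B$. Hence $\mathrm{ran}(T_\A)=\B$, as required.

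For part (2), if $\A$ is already a closed subalgebra of $C(K)$, then $\B=\A$ in the notation of part (1), so $T_\A\colon C(\nabla\A)\to\A$ is a surjective isometric multiplicative linear map, i.e. a multiplicative isometric isomorphism of $C(\nabla\A)$ onto $\A$. For part (3), let $\B$ be the closed subalgebra of $C(K)$ generated by $\A$ and let $\pi_{\A,\B}\colon\R^\B\to\R^\A$ be the canonical projection onto the coordinates indexed by $\A\subseteq\B$. A direct check of definitions gives $\pi_{\A,\B}\circ\Pi_\B=\Pi_\A$, so $\pi_{\A,\B}$ maps $\nabla\B=\Pi_\B[K]$ onto $\Pi_\A[K]=\nabla\A$, and the restriction $\pi_{\A,\B}|\nabla\B\colon\nabla\B\to\nabla\A$ is a continuous surjection between compact Hausdorff spaces. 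It remains to see it is injective. Suppose $y,y'\in\nabla\B$ with $\pi_{\A,\B}(y)=\pi_{\A,\B}(y')$; choosing $x,x'\in K$ with $\Pi_\B(x)=y$, $\Pi_\B(x')=y'$, this says $f(x)=f(x')$ for all $f\in\A$. The set of $g\in C(K)$ with $g(x)=g(x')$ is a closed subalgebra of $C(K)$ containing $\A$, hence contains $\B$; therefore $g(x)=g(x')$ for all $g\in\B$, i.e. $\Pi_\B(x)=\Pi_\B(x')$, so $y=y'$. A continuous bijection between compact Hausdorff spaces is a homeomorphism, which completes the proof.

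The only genuine content here is the use of Stone-Weierstrass to pin down the range in part (1); everything else is bookkeeping with the universal property of product maps. The one point that needs a little care — and the closest thing to an obstacle — is making sure the separation-of-points argument is applied on the correct space: the coordinate functions separate points of $\nabla\A$ by construction (this is exactly what collapsing $K$ along $\Pi_\A$ achieves), whereas they need not separate points of $K$ itself, so Stone-Weierstrass must be invoked on $C(\nabla\A)$ and then transported back to $C(K)$ via the isometric homomorphism $T_\A$ rather than applied to $C(K)$ directly.
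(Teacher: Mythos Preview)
Your proof is correct and follows essentially the same route as the paper. The paper's own proof of (1) simply cites external references (Proposition 3.11 of \cite{superfew} and 7.5.2 of \cite{semadeni}), which amount to exactly the Stone--Weierstrass argument you spell out, and your (2) matches the paper's one-line derivation from (1). For (3) there is a small stylistic difference: the paper argues via the composition operators, noting $T_\B(f\circ\pi_{\A,\B})=T_\A(f)$ and that $T_\A$ and $T_\B$ have the same image by (1), whereas you argue pointwise that $\{g\in C(K):g(x)=g(x')\}$ is a closed subalgebra containing $\A$ and hence $\B$; these are equivalent and equally short.
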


\begin{proof}
    The item  (1) is essentially Proposition \ref{stone}  (2) and is shown
     in Proposition 3.11 of \cite{superfew} (cf. 7.5.2. of \cite{semadeni}). The item (2) is
     a consequence of item  (1).
    For (3) note that by the definitions $\pi_{\A, \B}$ restricted to $\nabla\B$ is onto $\nabla\A$ and
     $T_\B(f\circ \pi_{\A, \B})=T_\A(f)$ for
    all $f\in C(\nabla \A)$. 
   On the other hand by (1) the images of
    $T_\A$ and $T_\B$ are the same, so $\pi_{\A, \B}$ restricted to $\nabla\B$ is also
    injective and so a homeomorphism onto $\nabla\A$.
\end{proof}

\begin{lemma}\label{homeo} Suppose that $K, L$ are compact Hausdorff spaces and $\phi: K\rightarrow L$
is a continuous surjection, then $\nabla\A$ is homeomorphic to $L$, where
$\A=\{f\circ \phi: f\in C(L)\}$. In particular $\nabla C(K)$ is homeomorphic to $K$.
\end{lemma}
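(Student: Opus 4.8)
The plan is to use Proposition \ref{proposition-Pi}(1) together with Stone's theorem as isolated in Proposition \ref{stone}. First I would observe that $\A=\{f\circ\phi:f\in C(L)\}$ is, by Proposition \ref{stone}(1) applied to $\phi$, precisely the image of the isometric multiplicative linear embedding $T_\phi:C(L)\to C(K)$; in particular $\A$ is a \emph{closed} subalgebra of $C(K)$, since it is the isometric image of the complete space $C(L)$, and it contains the constant functions (it contains $1\circ\phi=1$). Thus Proposition \ref{proposition-Pi}(2) applies to $\A$, giving a multiplicative isometric isomorphism between $\A$ and $C(\nabla\A)$.

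Next I would combine this with the isomorphism $T_\phi:C(L)\to\A$ to conclude that $C(L)$ and $C(\nabla\A)$ are multiplicatively isometrically isomorphic as unital real algebras. It then remains to pass from an algebra isomorphism $C(L)\cong C(\nabla\A)$ to a homeomorphism $L\cong\nabla\A$. The cleanest route is again Proposition \ref{stone}: the composition $T_\phi^{-1}\circ T_\A:C(\nabla\A)\to C(L)$ is a multiplicative isometric linear embedding which is moreover onto, so its image is the closed subalgebra $C(L)$ of itself, i.e.\ all of $C(L)$; applying Proposition \ref{stone}(2) to this situation (or the standard fact that a surjective unital algebra isomorphism between $C(L)$ and $C(L')$ is induced by a homeomorphism $L'\to L$) yields a continuous surjection $L\to\nabla\A$ which, being induced by an \emph{isomorphism}, is injective, hence a homeomorphism by compactness. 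Alternatively, and perhaps more in the spirit of the preceding material, I would argue directly that $\Pi_\A:K\to\nabla\A\subseteq\R^\A$ factors through $\phi$: since every coordinate function $f\circ\phi\in\A$ is constant on the fibers of $\phi$, the map $\Pi_\A$ is constant on fibers of $\phi$ and therefore induces a continuous bijection $\psi:L\to\nabla\A$ with $\Pi_\A=\psi\circ\phi$; as $L$ is compact and $\nabla\A$ Hausdorff, $\psi$ is a homeomorphism.

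The ``in particular'' clause is the special case $L=K$ and $\phi=\mathrm{id}_K$, for which $\A=C(K)$, so $\nabla C(K)$ is homeomorphic to $K$. I do not expect a genuine obstacle here; the only point requiring a little care is the verification that $\Pi_\A$ separates points of $L$ (equivalently that $\psi$ is injective), which follows because $C(L)$ separates points of $L$ and every $g\in C(L)$ appears, via $g\circ\phi$, as a coordinate of $\Pi_\A$. One should also note explicitly that $\A$ is nonempty (it contains the constants) so that Definition \ref{definition-nabla} and Proposition \ref{proposition-Pi} are applicable.
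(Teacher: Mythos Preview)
Your proposal is correct and follows essentially the same route as the paper: use Proposition~\ref{stone}(1) to identify $\A$ isometrically with $C(L)$, then Proposition~\ref{proposition-Pi}(2) to identify $\A$ with $C(\nabla\A)$, and finally pass from the isometry $C(L)\cong C(\nabla\A)$ to a homeomorphism $L\cong\nabla\A$. The paper simply invokes the Banach--Stone theorem for this last step, whereas your alternative direct argument (factoring $\Pi_\A$ through $\phi$ to get a continuous bijection $\psi:L\to\nabla\A$) is a slightly more elementary way to reach the same conclusion without appealing to Banach--Stone.
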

\begin{proof} By Proposition \ref{stone} (1) the algebra  $C(L)$ is linearly isometric to
$\A$ and by Proposition \ref{proposition-Pi} (2) the algebra $\A$ is linearly isometric to
$C(\nabla\A)$. So, by the Banach-Stone theorem $L$ and $\nabla\A$ are homeomorphic.
\end{proof}

\begin{definition}\label{def-maximal}
Suppose that $K$ is a compact Hausdorff space and 
 $\A$ is a subalgebra of $C(K)$. We say that $D \subseteq K$ is $\A$-maximal if $\Pi_\A^{-1} [\Pi_\A [D]] = D$.
\end{definition}

The following lemma lists some properties of $\A$-maximal sets:

\begin{lemma} \label{lemma-A-maximal}
    Suppose that $K$ is a compact Hausdorff space.  Let $\A$ be a subalgebra of $C(K)$
    \begin{enumerate}
        \item A set $D \subseteq K$ is $\A$-maximal if and only if there is $E \subseteq \nabla \A$ such that $D = \Pi_\A^{-1}[E]$.
        \item Taking preimages under $\Pi_\A$ is a bijection between the family of all subsets of $\nabla \A$
        and the family of $\A$-maximal sets. Its inverse is taking the images under $\Pi_\A$.
        
        \item The family of $\A$-maximal sets is closed under unions, intersections and complements.
        \item An $\A$-maximal set $V$ is open (resp. closed) if and only if $\Pi_\A [V]$ is open (resp. closed).
        \item If $E \subseteq K$ is $\A$-maximal and $D \subseteq K$ is arbitrary, 
        then $$\Pi_\A [E] \cap \Pi_\A[D] = \Pi_\A [E \cap D].$$
        \item If $f \in \A$ and $D \subseteq \R$, then $f^{-1}[D]$ is an $\A$-maximal set.
    \end{enumerate}
\end{lemma}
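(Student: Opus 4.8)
The whole lemma rests on two elementary facts about the map $\Pi_\A\colon K\to\R^\A$: it is continuous and its image is $\nabla\A$, so that $\Pi_\A[\Pi_\A^{-1}[E]]=E$ for every $E\subseteq\nabla\A$, while $\Pi_\A^{-1}$ commutes with arbitrary unions and intersections and turns relative complements in $\nabla\A$ into complements in $K$. In this language the $\A$-maximal sets are exactly the sets saturated with respect to the equivalence relation ``$\Pi_\A(x)=\Pi_\A(y)$'', and most of the claims are bookkeeping with these saturations.

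For (1): if $D$ is $\A$-maximal, put $E=\Pi_\A[D]\subseteq\nabla\A$; then $D=\Pi_\A^{-1}[E]$ by the definition of $\A$-maximality. Conversely, if $D=\Pi_\A^{-1}[E]$ with $E\subseteq\nabla\A$, then $\Pi_\A[D]=\Pi_\A[\Pi_\A^{-1}[E]]=E\cap\nabla\A=E$, so $\Pi_\A^{-1}[\Pi_\A[D]]=\Pi_\A^{-1}[E]=D$. Part (2) is then immediate: $E\mapsto\Pi_\A^{-1}[E]$ maps $\wp(\nabla\A)$ onto the family of $\A$-maximal sets by (1), it is injective because $\Pi_\A[\Pi_\A^{-1}[E]]=E$, and this last identity together with $\A$-maximality shows that $D\mapsto\Pi_\A[D]$ is its two-sided inverse. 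Part (3) follows by writing $D_i=\Pi_\A^{-1}[E_i]$ with $E_i\subseteq\nabla\A$ and using that $\Pi_\A^{-1}$ preserves unions and intersections, together with $K\setminus\Pi_\A^{-1}[E_i]=\Pi_\A^{-1}[\nabla\A\setminus E_i]$ (because $\Pi_\A^{-1}[\nabla\A]=K$), and then invoking (1). For (5), the inclusion $\Pi_\A[E\cap D]\subseteq\Pi_\A[E]\cap\Pi_\A[D]$ is automatic; conversely, if $y=\Pi_\A(e)=\Pi_\A(d)$ with $e\in E$ and $d\in D$, then $d\in\Pi_\A^{-1}[\Pi_\A[E]]=E$ by $\A$-maximality of $E$, hence $y\in\Pi_\A[E\cap D]$. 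Finally (6): if $f\in\A$ and $\pi_f\colon\R^\A\to\R$ is the $f$-th coordinate projection, then $f=\pi_f\circ\Pi_\A$, so $f^{-1}[D]=\Pi_\A^{-1}[\pi_f^{-1}[D]\cap\nabla\A]$ is $\A$-maximal by (1).

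The one point that needs a little topology is (4), so I would first record that $\Pi_\A\colon K\to\nabla\A$ is a continuous surjection from a compact space onto the Hausdorff space $\nabla\A\subseteq\R^\A$, hence a closed map. If $\Pi_\A[V]$ is closed, then $V=\Pi_\A^{-1}[\Pi_\A[V]]$ is closed by continuity and $\A$-maximality; if $V$ is closed, it is compact, so $\Pi_\A[V]$ is compact and thus closed in $\nabla\A$. Likewise, if $\Pi_\A[V]$ is open, then $V=\Pi_\A^{-1}[\Pi_\A[V]]$ is open. The remaining implication, that $V$ open implies $\Pi_\A[V]$ open, is where the pieces combine: $K\setminus V$ is closed and $\A$-maximal by (3), so $\Pi_\A[K\setminus V]$ is closed by the previous observation, while (5) applied to the $\A$-maximal set $V$ gives $\Pi_\A[V]\cap\Pi_\A[K\setminus V]=\Pi_\A[\emptyset]=\emptyset$, and trivially $\Pi_\A[V]\cup\Pi_\A[K\setminus V]=\nabla\A$; hence $\Pi_\A[V]=\nabla\A\setminus\Pi_\A[K\setminus V]$ is open. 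I expect this last step, the observation that images of complementary $\A$-maximal sets are genuinely complementary in $\nabla\A$ (which fails for sets that are not $\A$-maximal), to be the only place in the proof where one has to be careful rather than purely formal.
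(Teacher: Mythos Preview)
Your proof is correct and follows essentially the same approach as the paper's. The only cosmetic differences are that in (4) you argue the open case directly via (5) (showing $\Pi_\A[V]$ and $\Pi_\A[K\setminus V]$ partition $\nabla\A$) whereas the paper argues by contrapositive through complements, and in (6) you use the coordinate projection $\pi_f$ directly rather than invoking the operator $T_\A$ from Proposition~\ref{proposition-Pi}; both variations are equivalent to the paper's reasoning.
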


\begin{proof}
    The point (1) is clear.
    The point (2) follows from (1).
    Point (3) also follows from (2) and from the fact that 
    preimages respect set-theoretic operations of intersection, union and complements.

    For (4) note that for $\A$-maximal $V$ if $\Pi_\A [V]$ is open (resp. closed) then
     $V = \Pi_\A^{-1} [\Pi_\A [V]]$ is open (resp. closed) by continuity of $\Pi_\A$. 
     On the other hand, if $V$ is closed, then $\Pi_\A [V]$ is closed as $\Pi_\A$ 
     is a closed map (as its domain is compact) -- this does not require $V$ to be $\A$-maximal. 
     $\Pi_\A$ is not, however, an open map, 
     so we need to use $\A$-maximality to prove the last part about openness. 
     But a nonclosed subset of $\nabla\A$ cannot have a closed preimage under $\Pi_\A$ by
     the above. So a nonopen subset of $\nabla\A$ cannot have an open preimage under $\Pi_\A$
     because preimages preserve the complements.

    To show (5) we first note that clearly $\Pi_\A [E] \cap \Pi_\A[D] \supseteq \Pi_\A [E \cap D]$. 
    On the other hand, if $x \in \Pi_\A [E] \cap \Pi_\A [D]$, 
    we can find $d \in D$ such that $\Pi_\A (d) = x$. 
    As $E$ is $\A$-maximal, we get by definition that $d \in \Pi_\A^{-1}[\Pi_\A [E]] = E$. 
    Hence, $d \in E \cap D$ and $x \in \Pi_\A [E \cap D]$.
    
    By Proposition \ref{proposition-Pi} (1) for any $f \in \A$ there is $g = T_{\A}^{-1} (f) \in C(\nabla \A)$
     such that $f = g \circ \Pi_\A$. Hence $f^{-1}[D] = \Pi_\A^{-1} [g^{-1} [D]]$
      is an $\A$-maximal set by (1). We have thus proved (6).
\end{proof}

\subsection{Separation of sets by algebras of functions}

\begin{definition} Suppose that  $K$ is a compact Hausdorff space and $\A \subseteq C(K)$ and $D, E$ are subsets of $K$. 
We say that
$\A$ separates $D$ and $E$ if and only if
there is $f\in \A$ such that
$$\overline{f[D]} \cap \overline{f[E]}=\emptyset.$$
Otherwise we say that $\A$ does not separate $D$ and $E$.

\end{definition}

\begin{lemma} \label{lemma-separation-equivalence} Suppose that $K$ is a compact Hausdorff space 
    and $\A\subseteq C(K)$ and $D, E \subseteq K$. The following are equivalent:
    \begin{enumerate}
        \item $\A$ separates $D$ and $E$;
        \item The norm closure $\overline{\A}$ of $\A$ separates $D$ and $E$.
    \end{enumerate}
    If $\A$ is a subalgebra of $C(K)$, then the preceding points are also equivalent to
    \begin{enumerate}
        \item[(3)] The sets $\Pi_\A[D]$ and $\Pi_\A[E]$ have disjoint closures in $\nabla \A$.
    \end{enumerate}
\end{lemma}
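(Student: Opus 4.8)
The plan is to prove the chain $(1)\Leftrightarrow(2)$ first, and then add $(3)$ under the hypothesis that $\A$ is a subalgebra. The key observation throughout is that $\A$ separates $D$ and $E$ precisely when there is $f\in\A$ for which the images $f[D]$ and $f[E]$ have positive distance in $\R$, i.e. $\overline{f[D]}$ and $\overline{f[E]}$ are disjoint compact subsets of $\R$ (compactness coming from $f$ being bounded); equivalently there are disjoint open intervals $U,V\subseteq\R$ with $f[D]\subseteq U$ and $f[E]\subseteq V$.

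For $(1)\Rightarrow(2)$ the implication is trivial since $\A\subseteq\overline{\A}$. For $(2)\Rightarrow(1)$, suppose $f\in\overline{\A}$ witnesses separation, so $\overline{f[D]}\cap\overline{f[E]}=\emptyset$; since these are compact subsets of $\R$ there is $\varepsilon>0$ with $\mathrm{dist}(\overline{f[D]},\overline{f[E]})>3\varepsilon$. Pick $g\in\A$ with $\norm{f-g}_\infty<\varepsilon$. Then $g[D]$ lies within $\varepsilon$ of $f[D]$ and $g[E]$ within $\varepsilon$ of $f[E]$, so $\overline{g[D]}$ and $\overline{g[E]}$ are at distance at least $\varepsilon>0$ and in particular disjoint; thus $g\in\A$ witnesses that $\A$ separates $D$ and $E$. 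I expect this $\varepsilon$-perturbation argument to be the only place any real estimate is needed, and it is routine.

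For the equivalence with $(3)$ under the assumption that $\A$ is a subalgebra, I would use Proposition \ref{proposition-Pi}(1): the composition operator $T_\A\colon C(\nabla\A)\to C(K)$ is an isometric multiplicative embedding whose image is the closed subalgebra $\overline{\A}$ generated by $\A$, and $T_\A(h)=h\circ\Pi_\A$. Assume first $(3)$: $\Pi_\A[D]$ and $\Pi_\A[E]$ have disjoint closures in the compact space $\nabla\A$, hence by normality of $\nabla\A$ (as a compact Hausdorff space — it is compact, being a continuous image of $K$) there is $h\in C(\nabla\A)$ with $h$ equal to $0$ on $\overline{\Pi_\A[D]}$ and $1$ on $\overline{\Pi_\A[E]}$. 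Then $f:=T_\A(h)=h\circ\Pi_\A\in\overline{\A}$ satisfies $f[D]\subseteq\{0\}$ and $f[E]\subseteq\{1\}$, so $\overline{\A}$ separates $D$ and $E$, and by the already-proved equivalence so does $\A$. Conversely, assume $(1)$, so by $(2)$ there is $f\in\overline{\A}$ with $\overline{f[D]}\cap\overline{f[E]}=\emptyset$; write $f=h\circ\Pi_\A$ with $h\in C(\nabla\A)$. Since $f[D]=h[\Pi_\A[D]]$ and $h$ is continuous on the compact space $\nabla\A$, we get $h\bigl[\overline{\Pi_\A[D]}\bigr]\subseteq\overline{h[\Pi_\A[D]]}=\overline{f[D]}$ and similarly for $E$; as these closures are disjoint, $h$ separates the compact sets $\overline{\Pi_\A[D]}$ and $\overline{\Pi_\A[E]}$, which forces them to be disjoint, giving $(3)$.

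The main subtlety — though not really an obstacle — is bookkeeping the distinction between $f[D]$ and $\overline{f[D]}$ and making sure that continuity of $h$ on the compact space $\nabla\A$ lets one pass freely between $h\bigl[\overline{\Pi_\A[D]}\bigr]$ and $\overline{h[\Pi_\A[D]]}$; the inclusion $h\bigl[\overline{S}\bigr]\subseteq\overline{h[S]}$ holds by continuity, and the reverse inclusion $\overline{h[S]}\subseteq h\bigl[\overline{S}\bigr]$ holds because $h\bigl[\overline S\bigr]$ is compact hence closed. With this identification, all three implications reduce to the single fact that compact Hausdorff spaces are normal together with the isometric multiplicative identification of $\overline{\A}$ with $C(\nabla\A)$ from Proposition \ref{proposition-Pi}.
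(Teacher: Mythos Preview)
Your proof is correct and follows essentially the same approach as the paper: the $\varepsilon$-perturbation argument for $(1)\Leftrightarrow(2)$ and the identification $\overline{\A}\cong C(\nabla\A)$ via $T_\A$ together with normality of $\nabla\A$ for $(2)\Leftrightarrow(3)$. Your write-up is slightly more explicit about the closure bookkeeping (the equality $h\bigl[\overline{\Pi_\A[D]}\bigr]=\overline{h[\Pi_\A[D]]}$ via continuity and compactness), but the argument is the same.
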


\begin{proof}
    The nontrivial implication of the equivalence (1) and (2) 
    follows from the fact that if $f \in \overline{\A}$ is such that $\overline{f[D]} 
    \cap \overline{f[E]}=\emptyset$, then $\overline{f[D]}$ and $\overline{f[E]}$
     are two disjoint compact subsets of $\R$, and hence have positive 
     distance $\varepsilon > 0$. We can then take $f' \in \A$
      such that $\norm{f - f'}< \varepsilon/3$, 
      and then clearly $\overline{f'[D]} \cap \overline{f'[E]} = \emptyset$.
    
    Note that by Proposition \ref{proposition-Pi} (1)  for
     every $f \in  C( \nabla \A)$ the function $T_{\A}(f)$ is an element of $\overline{\A}$ (as $\A$ is an algebra) and
    \begin{align*}
        (T_{\A}(f))[D] &= f \left[\Pi_\A[D] \right], \\
        (T_{\A}(f))[E] &= f \left[\Pi_\A[E] \right].
    \end{align*}
    Hence, by the normality of $\nabla\A$, the sets $\Pi_\A[D]$ and $\Pi_\A[E]$ have 
    disjoint closures in $\nabla \A$ if and only if they are separated by some $f \in C( \nabla \A)$
     if and only if the function $T_{\A} (f) \in \overline{\A}$ 
     witnesses that $\overline{\A}$ separates $D$ and $E$. Hence, we have $(2) \Leftrightarrow (3)$.
\end{proof}

\begin{lemma} \label{piecewise-separation} Suppose that $K$ is a compact Hausdorff space,
   $\A$ is a subalgebra of $C(K)$ and $\V$ is
    a cover of $K$ consisting of open $\A$-maximal sets. Suppose further that $D , E \subseteq K$ and 
     that for every $V \in \V$, $\A$ 
    separates $D \cap V$ and $E \cap V$. Then $\A$ separates $D$ and $E$.
\end{lemma}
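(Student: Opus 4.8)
The plan is to transport the problem to the Gelfand space $\nabla\A$, where the hypothesis becomes a statement about an open cover, and then deduce the conclusion from the elementary fact that closure commutes with restriction to an open set. Throughout write $\pi=\Pi_\A$ and put $D'=\pi[D]$, $E'=\pi[E]$. By Lemma \ref{lemma-separation-equivalence} (the equivalence of (1) and (3), valid since $\A$ is a subalgebra), proving that $\A$ separates $D$ and $E$ is the same as proving that $\overline{D'}$ and $\overline{E'}$ are disjoint subsets of $\nabla\A$, so that is what I would aim for.

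First I would record that $\{\pi[V]:V\in\V\}$ is an \emph{open} cover of $\nabla\A$: each $\pi[V]$ is open by Lemma \ref{lemma-A-maximal}(4), since $V$ is open and $\A$-maximal, and the family covers $\nabla\A=\pi[K]$ because $\V$ covers $K$. Next, suppose toward a contradiction that some point $x$ lies in $\overline{D'}\cap\overline{E'}$, and pick $V\in\V$ with $x\in W:=\pi[V]$. Since $W$ is an open neighbourhood of $x$ and $x\in\overline{D'}$, every neighbourhood of $x$ contained in $W$ meets $D'$, hence meets $D'\cap W$; therefore $x\in\overline{D'\cap W}$, and by the same reasoning $x\in\overline{E'\cap W}$. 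Now invoke Lemma \ref{lemma-A-maximal}(5) with the $\A$-maximal set $V$: it gives $D'\cap W=\pi[D]\cap\pi[V]=\pi[D\cap V]$ and likewise $E'\cap W=\pi[E\cap V]$. By hypothesis $\A$ separates $D\cap V$ and $E\cap V$, so Lemma \ref{lemma-separation-equivalence} yields $\overline{\pi[D\cap V]}\cap\overline{\pi[E\cap V]}=\emptyset$, contradicting the fact that $x$ belongs to both closures. Hence $\overline{D'}\cap\overline{E'}=\emptyset$, and Lemma \ref{lemma-separation-equivalence} again gives that $\A$ separates $D$ and $E$.

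I do not expect a genuine obstacle in this argument. The one point that needs care is methodological: one should resist trying to ``glue together'' on $K$ the functions $f_V\in\A$ witnessing the separations $\overline{f_V[D\cap V]}\cap\overline{f_V[E\cap V]}=\emptyset$, since there is no reason such a patch would lie in $\A$; instead, passing to $\nabla\A$ makes $\A$-maximality available (so that $\pi$ commutes with the relevant intersections, Lemma \ref{lemma-A-maximal}(5)) and replaces the gluing by the triviality that $x\in\overline{D'}$ and $x\in W$ open imply $x\in\overline{D'\cap W}$. It is worth noting as a remark that compactness of $K$ plays no role beyond what is already used in the cited lemmas: no finite subcover is needed, an arbitrary open cover $\V$ suffices.
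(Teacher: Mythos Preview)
Your proof is correct and follows essentially the same route as the paper's: both pass to $\nabla\A$ via Lemma~\ref{lemma-separation-equivalence}, use Lemma~\ref{lemma-A-maximal}(4) to see that the images $\Pi_\A[V]$ form an open cover, pick a hypothetical common closure point, use openness of $\Pi_\A[V]$ to reduce to the closure of the intersection, and invoke Lemma~\ref{lemma-A-maximal}(5) to identify that intersection with $\Pi_\A[D\cap V]$, reaching the contradiction. The only cosmetic difference is that the paper phrases the step ``$x\in\overline{D'}$ and $x\in W$ open $\Rightarrow x\in\overline{D'\cap W}$'' via $\overline{A\cup B}=\overline A\cup\overline B$, while you argue it directly with neighbourhoods.
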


\begin{proof}
    Suppose that $\A$ does not separate $D$ and $E$.
     Then by Lemma \ref{lemma-separation-equivalence} (3) 
     we have that there is some $y \in \nabla \A$ such that 
     $y \in \overline{\Pi_\A [D]} \cap \overline{ \Pi_\A [E]} \neq \emptyset$. 
     As $\{ \Pi_\A [V]: V \in \V\}$ is an open cover of $\nabla \A$
     by Lemma \ref{lemma-A-maximal} (4), there is $V \in \V$ such that $y \in \Pi_\A [V]$. 
      Since  $\Pi_\A [V]$ is open, $y\not\in \overline{\Pi_\A [D]\setminus \Pi_\A [V]}$,
      so $y\in \overline{\Pi_\A [D]\cap \Pi_\A [V]}$ as we can apply ${\overline{A\cup B}}=\overline{A}\cup\overline{B}$
      for $A=\Pi_\A [D]\setminus \Pi_\A [V]$ and $B=\Pi_\A [D]\cap \Pi_\A [V]$. 
      Similarly  $y\in \overline{\Pi_\A [E]\cap \Pi_\A [V]}$.
      
      As $V$ is $\A$-maximal, we get by Lemma \ref{lemma-A-maximal} (5) 
      that $y\in \overline{\Pi_\A [D\cap V]}$ and that $y\in \overline{\Pi_\A [E\cap V]}$. Hence 
      by Lemma \ref{lemma-separation-equivalence} (3)
       we get that $\A$ does not separate $D \cap V$ and $E \cap V$ -- a contradiction.    
\end{proof}

\begin{lemma}\label{nonseparation-zero} Suppose that $K$ is a compact Hausdorff space 
    $D, E\subseteq K$ and $\A$ is a subalgebra of $C(K)$ which does 
    not separate $D$ and $E$. Let $f \in C(K)$ be a function such that $f|D = 0$ and $f|E = 0$. 
    Then the algebra $\A \langle f \rangle$ generated by $\A$ and $f$ does not separate $D$ and $E$.
\end{lemma}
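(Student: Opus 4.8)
The plan is to argue by contradiction, using the explicit normal form for elements of $\A\langle f\rangle$ recalled in Section 2: every $h\in\A\langle f\rangle$ can be written as $h=g_0+\sum_{i=1}^m g_i\cdot f^i$ for some $g_0,\dots,g_m\in\A$. So suppose, contrary to the claim, that $\A\langle f\rangle$ separates $D$ and $E$, and fix a witness $h\in\A\langle f\rangle$, i.e.\ $\overline{h[D]}\cap\overline{h[E]}=\emptyset$. Write $h$ in the above form.

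The key observation is that $f$ vanishes identically on $D\cup E$, so for every $i\ge 1$ and every $x\in D\cup E$ we have $g_i(x)f(x)^i=0$; hence $h$ and $g_0$ agree on $D\cup E$. In particular $h[D]=g_0[D]$ and $h[E]=g_0[E]$, and therefore
$$\overline{g_0[D]}\cap\overline{g_0[E]}=\overline{h[D]}\cap\overline{h[E]}=\emptyset.$$
Thus $g_0\in\A$ witnesses that $\A$ separates $D$ and $E$, contradicting the hypothesis. Consequently $\A\langle f\rangle$ does not separate $D$ and $E$.

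Since this computation is entirely elementary, there is no real obstacle here; the only point requiring a little care is to invoke the normal form for elements of $\A\langle f\rangle$ and to note that the vanishing of $f$ on $D$ and on $E$ annihilates all the higher-order terms $g_i f^i$ simultaneously, so that the separation data for $h$ are literally the separation data for its degree-zero part $g_0$. One could alternatively phrase the argument through the Gelfand-space criterion of Lemma \ref{lemma-separation-equivalence}(3), comparing $\Pi_\A$ with $\Pi_{\A\langle f\rangle}$, but the direct restriction argument above is shorter and self-contained.
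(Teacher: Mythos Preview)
Your proof is correct and follows essentially the same approach as the paper: use the normal form $h=g_0+\sum_{i\geq 1}g_i f^i$ for elements of $\A\langle f\rangle$, observe that the higher-order terms vanish on $D\cup E$, and conclude that $h[D]=g_0[D]$ and $h[E]=g_0[E]$, so $g_0\in\A$ would separate $D$ and $E$. The paper phrases this directly rather than by contradiction, but the argument is identical.
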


\begin{proof}
    Any element $g$ of $A \langle f \rangle$ is of the form 
    $g  = g_0 + \sum_{k=1}^m g_k \cdot f^k$ for some $m \in \N$ and $g_0,\dots,g_m \in \A$.
     It follows from the assumptions that $g[D] = g_0[D]$ and $g[E] = g_0[E]$. 
     Hence, as $g_0$ cannot separate $D$ and $E$, neither can $g$.
\end{proof}

The following lemma is clear.

\begin{lemma}\label{nonseparation-limit} Suppose that $K$ is a compact Hausdorff space 
 $D, E \subseteq K$ and $\gamma$ is an ordinal. 
If $(\A_\alpha)_{\alpha < \gamma}$ is an increasing family 
of subalgebras of $C(K)$, none of which separate $D$ and $E$, then $\bigcup_{\alpha < \gamma} \A_\alpha$ 
does not separate $D$ and $E$.
\end{lemma}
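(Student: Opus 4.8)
The plan is to observe that the property ``$\A$ separates $D$ and $E$'' is witnessed by a single function, and hence is automatically inherited by any family of functions that happens to contain such a witness. So I would argue by contradiction. Suppose that $\bigcup_{\alpha<\gamma}\A_\alpha$ separates $D$ and $E$. By the definition of separation there is a function $f\in\bigcup_{\alpha<\gamma}\A_\alpha$ such that $\overline{f[D]}\cap\overline{f[E]}=\emptyset$.

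Next I would use the hypothesis that the family $(\A_\alpha)_{\alpha<\gamma}$ is increasing in order to locate this witness at a single stage: since $f$ lies in the union, there is an index $\alpha_0<\gamma$ with $f\in\A_{\alpha_0}$. The condition $\overline{f[D]}\cap\overline{f[E]}=\emptyset$ depends only on $f$, $D$ and $E$ and not on the ambient algebra, so the same $f$ witnesses that $\A_{\alpha_0}$ separates $D$ and $E$. This contradicts the assumption that none of the $\A_\alpha$ separates $D$ and $E$, and the lemma follows.

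There is essentially no obstacle here, which is why the statement is flagged as clear. The only point worth noting is that we do not need $\bigcup_{\alpha<\gamma}\A_\alpha$ to be a subalgebra of $C(K)$ for the argument (though of course it is, being an increasing union of subalgebras), since the definition of (non)separation is meaningful for an arbitrary subset of $C(K)$; the ``increasing'' hypothesis is used solely to ensure that every element of the union already belongs to one of the algebras $\A_\alpha$.
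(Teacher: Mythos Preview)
Your argument is correct and is exactly the trivial observation the paper has in mind when it declares the lemma ``clear'' without proof. One small remark: the ``increasing'' hypothesis is in fact not needed even for the step you attribute it to---any element of a union of sets lies in one of the sets, whether or not the family is increasing---so the lemma holds for an arbitrary family $(\A_\alpha)_{\alpha<\gamma}$ of subsets of $C(K)$.
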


\subsection{Copies of $\beta\N$ in Gelfand spaces}

We will often use the following:

\begin{proposition}[Talagrand \cite{talagrand}]\label{talagrand}
Suppose that $\XX$ is a Banach space and $B_{\XX^*}$ is  its dual unit ball with the
weak$^*$ topology. The following are equivalent:
\begin{enumerate}
    \item $\XX$ admits a subspace isomorphic to $\ell_1(\cc)$;
    \item $\XX$ admits a quotient isomorphic to $\ell_\infty$;
    \item $\XX$ admits a quotient isometric to $\ell_\infty$;
    \item $B_{\XX^*}$ contains a homeomorphic copy of $\beta \N$.
\end{enumerate}
\end{proposition}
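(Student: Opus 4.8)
The plan is to establish the cycle of implications $(3)\Rightarrow(2)\Rightarrow(1)\Rightarrow(4)\Rightarrow(3)$, since $(3)\Rightarrow(2)$ is trivial and the substantive content lies in $(2)\Rightarrow(1)$, $(1)\Rightarrow(4)$, and $(4)\Rightarrow(3)$. I would first record that this is a classical result and then sketch the four steps. For $(2)\Rightarrow(1)$: if $q:\XX\to\ell_\infty$ is a bounded surjection, then by the open mapping theorem it is a quotient map up to a constant. One uses the fact that $\ell_\infty=C(\beta\N)$ contains an isometric copy of $\ell_1(\cc)$-worth of structure only in its dual; the right approach is rather to lift: $\ell_\infty$ has a family $(e_A)_{A\subseteq\N}$ of the characteristic functions of subsets of $\N$, but more usefully one uses that $\ell_\infty$ has a quotient isomorphic to $\ell_\infty/c_0$ which contains an isometric copy of $c_0(\cc)$, and dualizing an appropriate surjection $\XX\to\ell_\infty\to X$ with $X^*\supseteq\ell_1(\cc)$; concretely, $\ell_\infty^*\supseteq\ell_1$ but we need density $\cc$, so instead I would pick an uncountable almost disjoint family $\{A_\xi:\xi<\cc\}$ on $\N$ and the associated sequence of disjointly supported norm-one functionals, then pull back through $q^*$ to get a copy of $\ell_1(\cc)$ inside $\XX^*$; since $\ell_1(\cc)$ is separably injective-free the pullback is again an isomorphic embedding after a perturbation argument. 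Rosenthal's theorem on operators from $\ell_\infty$ (every non-weakly-compact operator on $\ell_\infty$ fixes a copy of $\ell_\infty$) is the clean tool here: a quotient map $\XX\to\ell_\infty$ forces, by a standard biorthogonal-system argument, a copy of $\ell_1(\cc)$ in $\XX$.

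For $(1)\Rightarrow(4)$: if $\XX\supseteq\ell_1(\cc)$ isomorphically, then $B_{\XX^*}$ maps continuously and weak$^*$-to-weak$^*$ onto a weak$^*$-dense-in-the-ball subset of $B_{\ell_1(\cc)^*}=B_{\ell_\infty(\cc)}$ by restriction; since $\ell_\infty(\cc)\supseteq\ell_\infty$, the unit ball $B_{\ell_\infty}=[-1,1]^\N$ is a continuous weak$^*$ image of $B_{\ell_1(\cc)^*}$. But $[-1,1]^\N$ is a separable compact space while we want $\beta\N$. The correct route: use the extreme points, namely $\{-1,1\}^\N$ sits inside $B_{\ell_1^*}$, and the weak$^*$-closure in $\XX^*$ of the images of the standard unit vectors of $\ell_1(\cc)$ (viewed via Hahn–Banach extensions of the coordinate functionals) is, after a Rosenthal-type argument, homeomorphic to $\beta\kappa$ for some uncountable $\kappa\le\cc$, hence contains $\beta\N$. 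More carefully, one shows that an isomorphic copy of $\ell_1(\cc)$ in $\XX$ yields a bounded sequence $(x_n)$ in $\XX$ equivalent to the $\ell_1$-basis with the property that the weak$^*$-closure of $\{x_n^{**}:n\in\N\}\subseteq\XX^{**}$ restricted to $B_{\XX^*}$ (via evaluation) produces a copy of $\beta\N$ — this is the Rosenthal $\ell_1$-theorem dichotomy mechanism: a sequence spanning $\ell_1$ has no weakly Cauchy subsequence, so its evaluations on $B_{\XX^*}$ form a relatively weak$^*$-compact set in which the sequence has the oscillation property characterizing $\beta\N$.

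For $(4)\Rightarrow(3)$: if $\beta\N\hookrightarrow B_{\XX^*}$ homeomorphically with image $L$, restriction gives a continuous surjection $r:B_{\XX^*}\to$ (image in $\R^L$), but more directly: the embedding $L\hookrightarrow B_{\XX^*}\subseteq (\XX^*,w^*)$ means each point of $L$ is a norm-$\le 1$ functional, and since $L\cong\beta\N$ with $\N$ dense, the map $\XX\ni x\mapsto (x(\ell))_{\ell\in\N}\in\ell_\infty$ is bounded; I would then verify surjectivity (in fact a metric surjection onto $\ell_\infty$) using that the $\N\subseteq L$ are "independent" functionals — precisely, because any bounded function on $\N$ extends continuously to $\beta\N=L\subseteq B_{\XX^*}$, and conversely by Hahn–Banach / weak$^*$-density of $\N$ in $L$ one produces preimages, so the operator $T:\XX\to\ell_\infty$, $Tx=(x(n))_{n\in\N}$, has $\|T\|\le 1$ and image containing the dense subalgebra of eventually-simple sequences, and a closure/completeness argument upgrades this to an isometric quotient onto all of $\ell_\infty$. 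The main obstacle I anticipate is $(4)\Rightarrow(3)$ done isometrically rather than just isomorphically: getting the quotient to be exactly $\ell_\infty$ and exactly norm-one requires care with the weak$^*$-compactness of $L$ and the fact that $C(\beta\N)=\ell_\infty$ with its sup norm, so one must check that separation of points of $\N\subseteq L$ inside $B_{\XX^*}$ is "uniform," i.e. realized by elements of $B_{\XX}$; this is where the full strength of $L\cong\beta\N$ (as opposed to an arbitrary compactification of $\N$) enters, via the universal property of $\beta\N$ applied to the bounded map $\N\to\R$.
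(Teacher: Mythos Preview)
Your proposal has genuine gaps in three of the four implications, and the overall route differs from the paper's.

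\textbf{$(2)\Rightarrow(1)$:} Your argument drifts between putting $\ell_1(\cc)$ in $\XX^*$ (via $q^*$) and in $\XX$, and the appeal to ``Rosenthal's theorem on operators from $\ell_\infty$'' does not apply, since you have an operator \emph{into} $\ell_\infty$, not from it. The paper's argument is short and clean: $\ell_1(\cc)$ embeds isomorphically into $\ell_\infty$; pull back under $q$ to get a surjection from a subspace of $\XX$ onto $\ell_1(\cc)$; then use that $\ell_1(\cc)$ is projective (every surjection onto it splits) to obtain a lifted copy of $\ell_1(\cc)$ inside $\XX$.

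\textbf{$(1)\Rightarrow(4)$:} Your direct approach via a single $\ell_1$-sequence and the Rosenthal dichotomy cannot work: take $\XX=\ell_1$ itself, whose dual ball $B_{\ell_\infty}=[-1,1]^\N$ in the weak$^*$ topology is metrizable and certainly contains no copy of $\beta\N$. The density-$\cc$ information is essential and your sketch loses it. The paper avoids a direct $(1)\Rightarrow(4)$ entirely and instead proves $(1)\Rightarrow(2)$ (extend any surjection $\ell_1(\cc)\to\ell_\infty$ to all of $\XX$ by the injectivity of $\ell_\infty$) and then $(2)\Rightarrow(4)$: the adjoint of a surjection $Q:\XX\to C(\beta\N)$ is a weak$^*$-homeomorphic embedding of $\beta\N$ (the point masses) into a bounded subset of $\XX^*$.

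\textbf{$(4)\Rightarrow(3)$:} This is the substantive step, and your sketch does not close the gap. From $\beta\N\subseteq B_{\XX^*}$ the map $T:\XX\to\ell_\infty$, $Tx=(\langle x,n\rangle)_{n\in\N}$, is indeed a norm-$\le 1$ operator, but there is no ``closure/completeness argument'' that makes it surjective: the image is just the set of restrictions to $\beta\N$ of evaluation functionals from $\XX$, which a priori is only a point-separating linear subspace of $C(\beta\N)$, not an algebra, so Stone--Weierstrass is unavailable, and Hahn--Banach produces functionals, not elements of $\XX$. Showing surjectivity here is precisely Talagrand's theorem and requires a nontrivial argument. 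The paper, accordingly, does not attempt this step but simply cites \cite{talagrand} for $(4)\Rightarrow(3)$ and $(4)\Rightarrow(2)$.
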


\begin{proof}
    The equivalence of (1) and (2) is well known, 
    but we include the short proof for the sake of completeness. 
    Suppose that $\XX$ contains a subspace isomorphic to $\ell_1(\cc)$. 
    Let $Q:\ell_1(\cc) \rightarrow \ell_\infty$ be  a surjective operator
    (which exists as $\ell_1(\cc)$ can be continuously linearly mapped onto
     any Banach space of density at most $\cc$). By the injectivity of $\ell_\infty$
     we can extend $Q$ to a surjective operator $\tilde{Q}:\XX \rightarrow \ell_\infty$, 
     which, by the first isomorphism theorem proves (1) $\Rightarrow$ (2). 
     
     On the other hand suppose that $Q: \XX \rightarrow \ell_\infty$
      is a surjective operator. As $\ell_1(\cc)$ is isomorphic to a subspace of 
      $\ell_\infty$, we can consider the subspace
       $\Y = Q^{-1}[\ell_1(\cc)]$ of $\XX$. 
       Then $Q|\Y:\Y \rightarrow \ell_1(\cc)$ is a surjective operator and we can 
       use the projectivity of $\ell_1(\cc)$ to find a subspace of $\Y$ (and thus a subspace of $\XX$) 
       which is isomorphic to $\ell_1(\cc)$, proving that (2) $\Rightarrow$ (1).
    
    The implications from (4) to (3) and (2) were proved by Talagrand \cite{talagrand}.
    The implication from (2) to (4) follows from the fact that $\ell_\infty$
    is isometric to $C(\beta\N)$ and so any linear surjection $Q:\XX\rightarrow \ell_\infty$
    induces weak$^*$ continuous  and hence bounded $Q^*|\beta\N: \beta\N\rightarrow \XX^*$. 
\end{proof}

\begin{lemma}\label{betaN-embedding} Suppose that a Banach space $\XX$  of density not bigger than $\cc$
does not admit $\ell_\infty$ as a quotient. Then $\XX$ isometrically
embeds into a Banach space $C(K)$, where $K$ is a compact Hausdorff space of  weight at most $\cc$
which does not admit $\beta\N$ as a subspace.
\end{lemma}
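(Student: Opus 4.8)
The plan is to take $K$ to be the dual unit ball $B_{\XX^*}$ equipped with the weak$^*$ topology and to check directly that it has all the required properties. By the Banach--Alaoglu theorem $K=(B_{\XX^*},w^*)$ is compact and Hausdorff. The canonical evaluation map $x\mapsto\hat x$, where $\hat x(x^*)=x^*(x)$ for $x^*\in B_{\XX^*}$, sends each $x\in\XX$ to a weak$^*$-continuous function on $K$, and $\|\hat x\|_\infty=\sup_{x^*\in B_{\XX^*}}|x^*(x)|=\|x\|$ by the Hahn--Banach theorem; hence $x\mapsto\hat x$ is a linear isometric embedding of $\XX$ into $C(K)$.

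Next I would bound the weight of $K$. Fix a norm-dense subset $D\subseteq\XX$ whose cardinality equals the density of $\XX$, which by hypothesis is at most $\cc$. The functions $\{\hat x:x\in D\}$ separate the points of $B_{\XX^*}$, and since on the bounded set $B_{\XX^*}$ one has $|x^*(x)-x^*(y)|\le\|x-y\|$, approximating arbitrary elements of $\XX$ by members of $D$ shows that $\{\hat x:x\in D\}$ generates the weak$^*$ topology of $K$. Thus the map $(\hat x)_{x\in D}$ embeds $K$ homeomorphically into $\R^D$, so the weight of $K$ is at most $\max(|D|,\omega)\le\cc$.

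Finally, the requirement that $K$ not admit $\beta\N$ as a subspace is precisely the negation of condition (4) in Talagrand's Proposition \ref{talagrand}, which by the equivalence (2)$\Leftrightarrow$(4) stated there is equivalent to the negation of condition (2), that is, to the assumption that $\XX$ does not admit a quotient isomorphic to $\ell_\infty$. Combining the four verifications completes the proof.

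As for the main obstacle: there is essentially none of substance, since the lemma is a repackaging of Talagrand's theorem together with the standard facts about dual balls. The only point requiring a (routine) argument rather than a citation is the weight estimate; everything else is Banach--Alaoglu, Hahn--Banach, or a direct appeal to Proposition \ref{talagrand}.
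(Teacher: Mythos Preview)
Your proof is correct and takes exactly the same approach as the paper: set $K=(B_{\XX^*},w^*)$ and invoke Proposition~\ref{talagrand}. The paper's proof is a two-line sketch that omits the routine verifications (compactness, the isometric embedding, and the weight bound) which you have spelled out in full.
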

\begin{proof} Take $K=B_{\XX^*}$ with the weak$^*$ topology
and use Proposition \ref{talagrand}.
\end{proof}

\begin{lemma}\label{betaN-product} Suppose that  $K$ is a compact Hausdorff space and  $\A, \B\subseteq C(K)$
are subalgebras of $C(K)$ such that neither
$\nabla\A$ nor $\nabla \B$ admits a homeomorphic copy of $\beta\N$ as a subspace.
Then $\nabla \C$ does not admit $\beta\N$ as a subspace,
where $\C$ is the subalgebra of $C(K)$ generated by $\A$ and $\B$.
\end{lemma}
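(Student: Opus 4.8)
The plan is to realize $\nabla\C$ as a closed subspace of the product $\nabla\A\times\nabla\B$ and then reduce everything to a purely topological statement: $\beta\N$ cannot be embedded into a product of two compact spaces without being embeddable into one of the factors.

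First I would show that the map $\Phi=(\pi_{\A,\C},\pi_{\B,\C}):\nabla\C\to\nabla\A\times\nabla\B$, where $\pi_{\A,\C},\pi_{\B,\C}$ are the canonical coordinate projections restricted to $\nabla\C$ (continuous and onto $\nabla\A$, $\nabla\B$ respectively, directly from the definitions, cf.\ Proposition~\ref{proposition-Pi}), is a homeomorphic embedding. It is continuous, and since $\nabla\C$ is compact and the target Hausdorff, it is enough to see that it is injective. A point of $\nabla\C$ is $\Pi_\C(x)$ for some $x\in K$, i.e.\ the evaluation $f\mapsto f(x)$ on $\C$; as $\C$ is generated as an algebra by $\A\cup\B$, every $f\in\C$ is a polynomial in finitely many members of $\A\cup\B$, so $f(x)$ is determined by the values at $x$ of the elements of $\A$ and of $\B$, that is by $\Phi(\Pi_\C(x))$. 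Hence $\Phi$ is injective and $\nabla\C$ is homeomorphic to the closed set $\Phi[\nabla\C]\subseteq\nabla\A\times\nabla\B$. So it suffices to prove: if $Z\subseteq X\times Y$ is a copy of $\beta\N$ with $X,Y$ compact Hausdorff, then $\beta\N$ embeds into $X$ or into $Y$; here $X=\nabla\A$ and $Y=\nabla\B$.

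For this I would use the standard fact that every infinite closed subspace of $\beta\N$ contains a homeomorphic copy of $\beta\N$ (an infinite compact space contains an infinite relatively discrete subspace $D$, and in the $F$-space $\beta\N$ such a $D$ is automatically $C^{*}$-embedded, so $\overline{D}\cong\beta\N$ and $\overline D$ lies in the given closed set). Consider $p=\pi_X|Z\colon Z\to X$. If some fibre $p^{-1}(x_{0})$ is infinite, then, being closed in $Z\cong\beta\N$, it contains a copy $W\cong\beta\N$; since $W\subseteq\{x_{0}\}\times Y$ and the second projection is a homeomorphism on this slice, $Y=\nabla\B$ contains a copy of $\beta\N$, contradicting the hypothesis on $\B$. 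Therefore $p$ must be finite-to-one.

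The crux is then the topological assertion that a finite-to-one continuous image of $\beta\N$ must contain a copy of $\beta\N$: applied to $p\colon Z\to X$ this puts a copy of $\beta\N$ inside $X=\nabla\A$, contradicting the hypothesis on $\A$ and finishing the proof. To establish this assertion I would first pass to an infinite $M\subseteq\N$ on which $p$ is injective on the canonical copy of $\N\subseteq Z\cong\beta\N$ — possible since finite-to-oneness forces this image to be infinite — so we may assume $p$ is the \v Cech-Stone extension of an injective sequence $(p(n))_{n}$; then, assuming for contradiction that $p$ is non-injective on $\overline M$ for every infinite $M\subseteq\N$, I would exploit that each fibre of $p$ is finite — so that only countably many free ultrafilters ever collide (under $p$) with a point of $\N$, and each such collision involves only finitely many naturals — to diagonalize against this countable family of finite constraints and build an infinite $M_{0}$ with $\overline{M_{0}}$ meeting every fibre of $p$ in at most one point, i.e.\ $p|\overline{M_{0}}$ injective, a contradiction. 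I expect this last diagonalization (simultaneously making $p$ injective on the remainder $M_{0}^{*}$ and avoiding collisions between $M_{0}$ and $M_{0}^{*}$) to be the real obstacle; the rest is routine.
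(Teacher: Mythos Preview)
Your reduction is exactly the paper's: embed $\nabla\C$ into $\nabla\A\times\nabla\B$ (the paper does this by observing that $\nabla(\A\cup\B)$ is visibly a subspace of the product and then invoking Proposition~\ref{proposition-Pi}(3); your explicit map $\Phi$ achieves the same thing) and then appeal to the purely topological fact that $\beta\N$ cannot sit inside a product $X\times Y$ of compact Hausdorff spaces unless it embeds into one of the factors. The paper does not reprove this fact; it simply cites it as a result of Talagrand (via Corollary~3.5 of \cite{negrepontis}).

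Your attempt to prove the product fact by hand is where the genuine gap lies, and it is precisely at the step you yourself flag. The infinite-fibre case is fine: an infinite closed subset of $\beta\N$ contains a copy of $\beta\N$, so an infinite fibre of $p=\pi_X|Z$ forces a copy of $\beta\N$ into $Y$. In the finite-to-one case, however, your diagonalization does not do what you need. You correctly observe that only countably many free ultrafilters $u$ satisfy $p(u)=p(n)$ for some $n\in\N$, and a countable diagonalization can indeed produce an infinite $M_0\subseteq\N$ with $p[M_0]\cap p[M_0^{*}]=\emptyset$. What this does \emph{not} give is injectivity of $p$ on $M_0^{*}$: there may be $2^{\cc}$ pairs $u\neq v$ of free ultrafilters on $M_0$ with $p(u)=p(v)$, and the choice of a countable set $M_0$ cannot be diagonalized against uncountably many such constraints one by one. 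Nothing in ``each fibre is finite'' bounds the number of bad fibres over $p[M_0^{*}]$. So the sketch, as written, does not establish that a finite-to-one continuous image of $\beta\N$ contains a copy of $\beta\N$; this is essentially the content of Talagrand's theorem and requires a different idea. For this lemma you should just cite the result, as the paper does.
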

\begin{proof} By Proposition \ref{proposition-Pi} (3)
the natural projections from $\nabla \overline{\C}\subseteq \R^{\overline{\C}}$ onto $\nabla \C\subseteq \R^\C$
or $\nabla(\A\cup\B)\subseteq \R^{\A\cup \B}$ are homeomorphisms, so
$\nabla \C$ and $\nabla(\A\cup\B)$ are homeomorphic.
It is clear that $\nabla(\A\cup \B)$ is a subspace of $\nabla\A\times\nabla\B$.
Hence, if $\nabla\C$ admits a copy of $\beta\N$, then so does $\nabla\A\times\nabla\B$.
However, by a result of \cite{talagrand} (cf. Corollary 3.5 of \cite{negrepontis}) this
means that one of $\nabla\A$ or $\nabla \B$ must admit a copy of $\beta\N$.
\end{proof}

\begin{lemma}\label{projectivity}
Suppose that $K, L$ are compact Hausdorff spaces and $\phi: K\rightarrow L$ is 
a continuous surjection. If $L$ has a subspace homeomorphic to
$\beta\N$, then so does $K$
\end{lemma}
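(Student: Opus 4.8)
The plan is to build the copy of $\beta\N$ inside $K$ by lifting, point by point, a copy of $\N$ that sits densely in a copy of $\beta\N$ inside $L$. So I would fix a homeomorphism $h\colon\beta\N\to Z$ onto a subspace $Z\subseteq L$ and set $z_n=h(n)$ for $n\in\N$. Using that $\phi$ is onto, I would choose for each $n$ a point $x_n\in K$ with $\phi(x_n)=z_n$, and put $D=\{x_n:n\in\N\}$. The claim to establish is then that the closure $\overline D$ of $D$ in $K$ is homeomorphic to $\beta\N$, which is exactly what the lemma asks for.

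The technical core is a standard criterion that I would state and prove first: if $X$ is compact Hausdorff and $D=\{x_n:n\in\N\}\subseteq X$ has pairwise distinct elements and satisfies $\overline{\{x_n:n\in A\}}\cap\overline{\{x_n:n\in B\}}=\emptyset$ (closures in $X$) for every pair of disjoint $A,B\subseteq\N$, then $n\mapsto x_n$ extends to a homeomorphism of $\beta\N$ onto $\overline D$. Indeed, $n\mapsto x_n$ is a continuous map of the discrete space $\N$ into the compact Hausdorff space $\overline D$, so by the universal property of $\beta\N$ it extends to a continuous $\psi\colon\beta\N\to\overline D$; its image is compact and contains the dense set $D$, hence equals $\overline D$. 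For injectivity, recall that the clopen subsets of $\beta\N$ are precisely the sets $\overline A^{\,\beta\N}$ for $A\subseteq\N$, with $\overline A^{\,\beta\N}\cap\overline{\N\setminus A}^{\,\beta\N}=\emptyset$; so given $p\neq q$ in $\beta\N$ one finds $A$ with $p\in\overline A^{\,\beta\N}$ and $q\in\overline{\N\setminus A}^{\,\beta\N}$, whence by continuity $\psi(p)\in\overline{\{x_n:n\in A\}}$ and $\psi(q)\in\overline{\{x_n:n\notin A\}}$, two disjoint sets, so $\psi(p)\neq\psi(q)$. A continuous bijection from a compact space to a Hausdorff space is a homeomorphism.

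It then remains to check the separation hypothesis for our particular $D$. The $x_n$ are distinct because the $z_n$ are. Given disjoint $A,B\subseteq\N$, continuity of $\phi$ gives $\phi\bigl[\overline{\{x_n:n\in A\}}\bigr]\subseteq\overline{\{z_n:n\in A\}}$ and likewise for $B$. Since $Z$ is compact it is closed in $L$, so these closures computed in $L$ coincide with the closures computed in $Z$, namely $h\bigl[\overline A^{\,\beta\N}\bigr]$ and $h\bigl[\overline B^{\,\beta\N}\bigr]$, which are disjoint. Hence $\overline{\{x_n:n\in A\}}$ and $\overline{\{x_n:n\in B\}}$ have disjoint images under $\phi$ and are therefore themselves disjoint, so the criterion applies and $\overline D\cong\beta\N$.

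I do not expect a genuine obstacle here. The only points needing a little care are computing closures in the correct space (handled by observing that $Z$ is closed in $L$ because it is compact, so $Z$ need not be assumed clopen) and the folklore criterion above, whose short proof is the heart of the matter. As an alternative one could argue abstractly: $Z\cong\beta\N$ is extremally disconnected, hence projective among compact Hausdorff spaces, so the restriction $\phi|\phi^{-1}[Z]\colon\phi^{-1}[Z]\to Z$ admits a continuous section $s$, and $s[Z]\subseteq K$ is then a homeomorphic copy of $Z\cong\beta\N$; I would prefer the elementary route since it avoids invoking Gleason's projectivity theorem.
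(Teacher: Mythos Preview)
Your proof is correct. The paper, however, takes a different route: it simply cites Gleason's result that $\beta\N$ is projective among compact Hausdorff spaces, which you yourself mention as an alternative at the end of your proposal. So the paper's ``proof'' is a one-line reference, whereas you give a self-contained elementary argument via the lifting-and-closure criterion for recognising copies of $\beta\N$. Your approach has the advantage of avoiding the machinery of extremal disconnectedness and projective objects (and of being completely explicit about which subspace of $K$ witnesses the conclusion), at the cost of a few paragraphs of standard but carefully checked details about closures and the universal property of $\beta\N$. Both arguments are entirely standard; the paper simply opts for brevity.
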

\begin{proof}
This is the projectivity of $\beta\N$ among compact Hausdorff spaces (\cite{gleason}).
\end{proof}

\begin{lemma}\label{small-algebra} Let $K$ be a compact Hausdorff space.
Suppose that $\XX$ is a Banach space 
which does not admit a quotient isomorphic to $\ell_\infty$ and suppose 
that there is a continuous surjection of $K$ onto the
 dual ball $B_{\XX^*}$ with the weak$^*$ topology. Then there is an isometric linear embedding
of $T: \XX \rightarrow C(K)$ such that 
$\nabla\A$ does not admit a copy of $\beta\N$, where $\A$ is the closed subalgebra of $C(K)$ generated by $T[\XX]$.
\end{lemma}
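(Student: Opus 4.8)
The plan is to combine the previous lemmas of this subsection. First I would recall that, by Proposition \ref{stone} (1), a continuous surjection $\phi\colon K\to B_{\XX^*}$ induces a multiplicative isometric linear embedding $T_\phi\colon C(B_{\XX^*})\to C(K)$; composing with the canonical isometric embedding $\XX\hookrightarrow C(B_{\XX^*})$ (evaluation of functionals on the weak$^*$-compact ball) yields an isometric linear embedding $T\colon\XX\to C(K)$. Let $\A$ denote the closed subalgebra of $C(K)$ generated by $T[\XX]$. It remains to show that $\nabla\A$ has no subspace homeomorphic to $\beta\N$.

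The key point is that one may take $\A$ to be contained in a manageable closed subalgebra. Consider the closed subalgebra $\B=T_\phi[C(B_{\XX^*})]$ of $C(K)$; since $T[\XX]\subseteq\B$ and $\B$ is closed, we have $\A\subseteq\B$. By Proposition \ref{stone} (1) and Lemma \ref{homeo} (or directly Proposition \ref{proposition-Pi} (2)), $\B$ is isometrically isomorphic to $C(B_{\XX^*})$, so $\nabla\B$ is homeomorphic to $B_{\XX^*}$. By Proposition \ref{talagrand}, the hypothesis that $\XX$ does not admit $\ell_\infty$ as a quotient means precisely that $B_{\XX^*}$ — hence $\nabla\B$ — contains no homeomorphic copy of $\beta\N$.

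Finally I would transfer this from $\B$ down to $\A$. Since $\A\subseteq\B$, the canonical projection $\pi_{\A,\B}\colon\R^{\B}\to\R^{\A}$ restricts (using Proposition \ref{proposition-Pi} (3), applied with $\B$ in place of the closed algebra generated and noting $\overline{\A}=\A$ sits inside $\B$) to a continuous surjection from $\nabla\B$ onto $\nabla\A$; more concretely, $\nabla\A$ is a continuous image of $\nabla\B$ via restriction of coordinates. Then Lemma \ref{projectivity} (projectivity of $\beta\N$) applies in the contrapositive: if $\nabla\A$ contained a copy of $\beta\N$, then so would $\nabla\B\cong B_{\XX^*}$, contradicting the previous paragraph. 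Hence $\nabla\A$ admits no copy of $\beta\N$, as required.

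I expect the only genuinely delicate point to be the bookkeeping in the last paragraph — namely producing the continuous surjection $\nabla\B\to\nabla\A$ cleanly. The safest route is to observe that $\A\subseteq\B$ gives a restriction map $C(\nabla\B)\cong\B\twoheadrightarrow\A\cong C(\nabla\A)$ which is a surjective algebra $^*$-homomorphism, and a surjection between $C(L)$ and $C(L')$ of this form is dual to a topological embedding $\nabla\A\hookrightarrow\nabla\B$; or, even more directly, $\Pi_\A=\pi_{\A,\B}\circ\Pi_\B$ as maps on $K$, so $\nabla\A=\pi_{\A,\B}[\nabla\B]$ is a continuous image of $\nabla\B$, and Lemma \ref{projectivity} finishes the argument. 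Everything else is a direct citation of the lemmas already established.
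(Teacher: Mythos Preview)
Your argument is correct, but it takes a slightly longer route than the paper. The paper observes that the canonical image $J[\XX]\subseteq C(B_{\XX^*})$ separates the points of $B_{\XX^*}$, so by Stone--Weierstrass the algebra it generates is dense in $C(B_{\XX^*})$; since $T_\phi$ is a multiplicative isometry, this forces $\A=T_\phi[C(B_{\XX^*})]=\B$ outright, whence $\nabla\A$ is actually \emph{homeomorphic} to $B_{\XX^*}$ (via Banach--Stone or Proposition \ref{proposition-Pi} (2)), and Proposition \ref{talagrand} finishes immediately. You instead only use $\A\subseteq\B$, pass to the continuous surjection $\nabla\B\to\nabla\A$ given by $\Pi_\A=\pi_{\A,\B}\circ\Pi_\B$, and invoke the projectivity of $\beta\N$ (Lemma \ref{projectivity}) in the contrapositive. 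This is valid and has the mild advantage of not needing Stone--Weierstrass, but it is less sharp: it proves only that $\nabla\A$ is a quotient of $B_{\XX^*}$, whereas in fact they coincide. One caution: your aside about a ``surjective algebra $^*$-homomorphism $C(\nabla\B)\cong\B\twoheadrightarrow\A$'' is garbled --- the inclusion $\A\hookrightarrow\B$ is injective, not surjective, and there is no natural surjection in that direction --- but your ``more direct'' alternative via $\Pi_\A=\pi_{\A,\B}\circ\Pi_\B$ is the right way to produce the map and is all you need.
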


\begin{proof} Let $\phi: K \rightarrow B_{\XX^*}$ be a continuous surjection.
 The induced composition operator  $T_\phi : C(B_{\XX^*}) \rightarrow C(K)$, $T_\phi(f) = f\circ \phi$, 
is a multiplicative linear  isometry onto its image  by Proposition \ref{stone}.
Let $J:\XX\rightarrow C(B_{\XX^*})$ be the canonical embedding. 
Let $T=  T_\phi\circ J$ and $\A$ be the closed subalgebra of $C(K)$ 
generated by $T[\XX]$. As $J[\XX]$ separates points of $B_{\XX^*}$, 
the algebra it generates in $C(B_{\XX^*})$ is dense in $C(B_{\XX^*})$ by the Stone-Weierstrass theorem. 
It follows from the fact that $T_\phi$ 
is a multiplicative linear isometry  
that $\A = T_\phi [C(B_{\XX^*})]$,  and so $C(\nabla\A)$ is
linearly isometric to $C(B_{\XX^*})$ by Proposition \ref{proposition-Pi} (2).
Hence by the Banach-Stone theorem  $B_{\XX^*}$ must be homeomorphic
 to $\nabla \A$. 
 By Proposition \ref{talagrand}, $B_{\XX^*}$
  does not contain a homeomorphic copy of $\beta \N$, and hence neither does $\nabla \A$.
\end{proof}

\begin{lemma}\label{new-promise}
Suppose  that $K$ is a compact Hausdorff space, $\A$ is a subalgebra of $C(K)$ and
$\{V_n: n\in \N\} \subseteq \coz(\A)$ are pairwise disjoint and nonempty.
If $\nabla\A$ does not contain a copy of $\beta\N$, then there is an infinite $B\subseteq\N$
and a countable $D\subseteq \bigcup_{n \in B} V_n$ and a countable
$E\subseteq \bigcup_{n \in \N\setminus B} V_n$ such that $D$ and  $E$ are not separated by $\A$ and
$|D\cap V_n|=|E\cap V_{n'}|=1$ for every $n\in B$ and every $n'\in \N\setminus B$.
\end{lemma}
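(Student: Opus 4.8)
The plan is to pick one representative $x_n \in V_n$ for each $n\in\N$ (possible as each $V_n$ is nonempty), put $y_n = \Pi_\A(x_n) \in \nabla\A$, and then partition $\N$ into two pieces so that the $\Pi_\A$-images of the corresponding sets of representatives have intersecting closures in $\nabla\A$; by Lemma~\ref{lemma-separation-equivalence}(3) this intersecting-closures condition is exactly non-separation by $\A$. Note that, since the $V_n$ are pairwise disjoint and $x_n \in V_n$, every subset of $\{x_n : n\in\N\}$ meets each $V_m$ in at most one point, so the conditions $|D \cap V_n| = |E \cap V_{n'}| = 1$ hold automatically once $D$ and $E$ are taken to be the appropriate sets of representatives. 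Also recall that $\nabla\A = \Pi_\A[K]$ is compact Hausdorff, hence normal.

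First I would dispose of the case in which some $z \in \nabla\A$ satisfies $y_n = z$ for all $n$ in an infinite set $S$: splitting $S = S_1 \sqcup S_2$ into two infinite sets and setting $B = S_1 \cup (\N\setminus S)$ (so that $\N\setminus B = S_2$), both $\Pi_\A[\{x_n : n\in B\}]$ and $\Pi_\A[\{x_n : n\in \N\setminus B\}]$ contain $z$, so $D = \{x_n : n\in B\}$ and $E = \{x_n : n\in\N\setminus B\}$ finish the proof. Hence we may assume each value is attained only finitely often among the $y_n$, and a straightforward recursion yields an infinite $\N' \subseteq \N$ along which the $y_n$ are pairwise distinct. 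At the end, once some $B' \subseteq \N'$ has been chosen, we set $B = B'$ and throw the leftover representatives $x_{n'}$, $n' \in \N\setminus\N'$, into $E$; this only enlarges $\Pi_\A[E]$, so non-separation is preserved, and the cardinality conditions are unaffected.

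The core is a dichotomy for the distinct sequence $(y_n)_{n\in\N'}$. If there is an infinite $B' \subseteq \N'$ with infinite complement in $\N'$ such that $\overline{\{y_n : n\in B'\}} \cap \overline{\{y_n : n\in \N'\setminus B'\}} \neq \emptyset$, then $D = \{x_n : n\in B'\}$ and $E = \{x_n : n\in\N\setminus B'\}$ work, again by Lemma~\ref{lemma-separation-equivalence}(3). Otherwise, I claim $\nabla\A$ contains a copy of $\beta\N$, contradicting the hypothesis. From the failure of the above one first gets that $\{y_n : n\in\N'\}$ is relatively discrete: if some $y_k$ were in the closure of $\{y_n : n\in\N',\ n\neq k\}$, split $\N'\setminus\{k\}$ into infinite sets $T_1 \sqcup T_2$ with $y_k \in \overline{\{y_n : n\in T_1\}}$, and the partition $B' = \{k\}\cup T_2$ contradicts the assumption. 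Using relative discreteness and normality of $\nabla\A$ one then checks that any two disjoint subsets of $\{y_n : n\in\N'\}$ have disjoint closures in $\nabla\A$ (if both are infinite, pad with the leftover indices of $\N'$ to get a genuine partition and apply the assumption; if one is finite, invoke relative discreteness directly). Thus $\{y_n : n\in\N'\}$ is a relatively discrete, $C^*$-embedded copy of $\N$ in the compact space $\nabla\A$, so its closure is homeomorphic to $\beta\N$ (see Chapter~6 of \cite{gj}); this is the contradiction.

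I expect the delicate point to be the second horn of the dichotomy, specifically upgrading ``complementary infinite subsets of $\{y_n : n\in\N'\}$ have disjoint closures'' to ``all pairs of disjoint subsets have disjoint closures'': the partition hypothesis needs both sides infinite, so the finite case must be routed through the separately-established relative discreteness, and relative discreteness itself must be squeezed out of the partition hypothesis by the padding trick above. The remaining ingredients — the reduction to a distinct sequence, the bookkeeping of leftover representatives, and the identification of non-separation with intersecting $\Pi_\A$-images via Lemma~\ref{lemma-separation-equivalence}(3) — are routine.
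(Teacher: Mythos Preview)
Your argument is correct, but the paper's proof is considerably shorter and avoids most of the case analysis. Two remarks on the comparison.

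First, the reduction to pairwise distinct $y_n$ is unnecessary: since $V_n\in\coz(\A)$, there are $f_n\in\A$ with $\coz f_n=V_n$, so $f_n(x_n)\neq 0$ while $f_m(x_n)=0$ for $m\neq n$; hence $\Pi_\A(x_n)$ already lie in pairwise disjoint cozero sets $W_n\subseteq\nabla\A$ and are automatically distinct. This eliminates your first case and the bookkeeping with $\N'$.

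Second, in place of your dichotomy the paper argues directly via the \v Cech--Stone extension. The map $n\mapsto y_n$ extends to $\beta\phi:\beta\N\to\nabla\A$; since $\nabla\A$ contains no copy of $\beta\N$ and $\N^*$ does, $\beta\phi$ cannot be injective on such a copy, so there exist distinct $u,v\in\N^*$ with $\beta\phi(u)=\beta\phi(v)$. Any $B\subseteq\N$ with $u\in\overline{B}$ and $v\in\overline{\N\setminus B}$ (automatically both infinite, as $u,v\in\N^*$) then satisfies $\overline{\{y_n:n\in B\}}\cap\overline{\{y_n:n\in\N\setminus B\}}\neq\emptyset$, and one concludes by Lemma~\ref{lemma-separation-equivalence}(3). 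This replaces your entire second horn --- the relative-discreteness argument, the padding trick, and the appeal to the $C^*$-embedding characterisation of $\beta\N$ --- by a single line using the universal property of $\beta\N$. Your route has the virtue of being self-contained (it essentially re-derives the fact that a countable discrete set whose disjoint subsets have disjoint closures has $\beta\N$ as its closure), but the paper's is decidedly quicker.
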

\begin{proof}
For $n\in \N$ let $f_n\in C(\nabla\A)$ be such that
$(f_n\circ \Pi_\A )^{-1}[\R\setminus\{0\}]=V_n$.  The existence of these objects follows from Proposition \ref{proposition-Pi} 
(1).
Let $W_n\subseteq \nabla\A$ be given by $f_n^{-1}[\R\setminus\{0\}]$.
It follows that $W_n$'s are pairwise disjoint as well.
Pick for all $n\in \N$ any element $x_n$ of $V_n$.
Consider $\phi: \N\rightarrow\nabla\A$ given by $\phi(n)=\Pi_\A (x_n)\in W_n$.
It follows that $\phi(n)$'s are all distinct for $n \in \N$. Consider the continuous extension
$\beta\phi:\beta\N\rightarrow\nabla\A$ of $\phi$. By the hypothesis of the lemma $\beta\phi$
cannot be a homeomorphism (equivalently injective) on any copy of $\beta\N$.
As $\N^*$ contains copies of $\beta\N$, we can find distinct $u, v\in \N^*$ such that
$\beta\phi(u)=\beta\phi(v)$. Let $B\subseteq \N$ be such that
$u\in {\overline{\{n: n\in B\}}}$ and $v\in {\overline{\{n: n\in \N\setminus B\}}}$, where the closures are
taken in $\beta\N$.
It follows that ${\overline{\{\Pi_\A(x_n): n\in B\}}}$ intersects ${\overline{\{\Pi_\A(x_n): n\in \N\setminus B\}}}$.
So $D=\{x_n: n\in B\}$  and $E=\{x_n: n\in \N\setminus B\}$ 
satisfy the conclusion of the lemma by Lemma \ref{lemma-separation-equivalence} (3).
\end{proof}

\section{\v Cech-Stone reminders and submorphisms}

\subsection{\v Cech-Stone reminders}

Let us recall that for every completely regular topological space $X$ 
we denote by $\beta X$ its \v Cech-Stone compactification and 
by $X^*$ its \v Cech-Stone remainder $X^* = \beta X \setminus X$.

\begin{lemma} \label{weight-remainder}
    Let $X$ be a $\sigma$-compact Hausdorff space of weight at most $\cc$. 
    Then the cardinality of $C(X)$ and the weight of $X^*$ is at most $\cc$.
\end{lemma}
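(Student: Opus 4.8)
The plan is to reduce the whole statement to the single assertion that $|C(K)|\le\cc$ for every compact Hausdorff $K$ of weight at most $\cc$, and then to prove that assertion by transporting everything to the model space $[0,1]^\cc$.

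First I would carry out the reduction. Write $X=\bigcup_{n\in\N}K_n$ with each $K_n$ compact; each $K_n$, being a subspace of $X$, has weight at most $\cc$. The restriction map $f\mapsto(f|K_n)_{n\in\N}$ is an injection of $C(X)$ into $\prod_{n\in\N}C(K_n)$ (injective precisely because $X=\bigcup_n K_n$), so $|C(X)|\le\prod_n|C(K_n)|\le\cc^{\aleph_0}=\cc$ as soon as $|C(K_n)|\le\cc$ for each $n$. For the remainder, recall that $X^*$ is a closed subspace of the compact space $\beta X$, so it is enough to bound the weight of $\beta X$; and $\beta X$ may be taken to be $\overline{e[X]}\subseteq[0,1]^{C_1(X)}$, where $e$ is the evaluation embedding $x\mapsto(g(x))_{g\in C_1(X)}$. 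Since $|C_1(X)|\le|C(X)|\le\cc$, the product $[0,1]^{C_1(X)}$ has weight at most $\cc$, hence so does $\beta X$ and so does its subspace $X^*$. Thus both halves of the lemma follow from $|C(K)|\le\cc$ for compact $K$ of weight $\le\cc$.

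To prove that, note that such a $K$ embeds into $[0,1]^\cc$ (a standard embedding theorem for compact Hausdorff spaces of weight $\le\cc$), and since $[0,1]^\cc$ is normal, Tietze's extension theorem makes the restriction map $C([0,1]^\cc)\to C(K)$ surjective; so it suffices to show $|C([0,1]^\cc)|\le\cc$. Here I would invoke the classical fact that every continuous real-valued function on $[0,1]^\cc$ depends on only countably many coordinates, i.e. factors as $\bar g\circ\pi_A$ for some countable $A\subseteq\cc$ and some $\bar g\in C([0,1]^A)$ (citable from standard texts, or derived from the separability of $[0,1]^\cc$). There are only $\cc^{\aleph_0}=\cc$ countable subsets $A\subseteq\cc$, and for each such $A$ the cube $[0,1]^A$ is compact metrizable, hence separable, so a continuous function on it is determined by its values on a fixed countable dense set and $|C([0,1]^A)|\le\cc^{\aleph_0}=\cc$. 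Hence $|C([0,1]^\cc)|\le\cc\cdot\cc=\cc$, finishing the proof. The only non-formal ingredient is this countable-support property of continuous functions on $[0,1]^\cc$; everything else is bookkeeping with the identity $\cc^{\aleph_0}=\cc$, the Tietze theorem, and standard embedding facts, so I expect the only (very mild) obstacle to be pinning down a clean reference for that fact.
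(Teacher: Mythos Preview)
Your proof is correct and the overall skeleton matches the paper's exactly: both reduce to the compact case via the injection $C(X)\hookrightarrow\prod_n C(K_n)$, and both bound the weight of $X^*$ by bounding $|C_b(X)|$ and hence the weight of $\beta X$.

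The only difference is in the compact step. The paper handles $|C(K)|\le\cc$ for compact $K$ of weight $\le\cc$ by taking a family of at most $\cc$ functions in $C(K)$ that separates points (from a basis of that size) and invoking Stone--Weierstrass: the closed subalgebra generated by $\cc$ functions has cardinality at most $\cc\cdot\cc^\omega=\cc$. This is a bit more direct than your route through an embedding into $[0,1]^\cc$, Tietze, and the countable-support fact for continuous functions on $[0,1]^\cc$; in particular it sidesteps exactly the one ingredient you flagged as needing a reference. Your argument is perfectly valid, but if you want to keep the proof self-contained, the Stone--Weierstrass count is the cleaner choice.
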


\begin{proof}
Note that in general if $Y$ is a compact Hausdorff space of weight not bigger than
an infinite cardinal $\kappa$, then $|C(Y)|\leq \cc\kappa^\omega$. This is because
using the open basis of cardinality not bigger than  $\kappa$
one can construct a family of the same cardinality in $C(Y)$ which separate points of $Y$.
So by the Stone-Weierstrass theorem the closed algebra generated by it is $C(Y)$ and
clearly such an algebra has cardinality not bigger than $\cc\kappa^\omega$. 

Let $X=\bigcup_{n\in \N}X_n$, where $X_n$ is compact. The above observation
gives us that $|C(X)|\leq |\Pi_{n\in\N}C(X_n)|\leq (\cc^\omega)^\omega=\cc$.
It follows that $|C(\beta X)|\leq \cc$ and so the weight of $\beta X$ is not bigger than $\cc$.
Hence, the weight of its subspace $X^*$ is not bigger than $\cc$.

\end{proof}

\begin{lemma}\label{closure-remainder} 
Suppose that $X$ is completely regular Hausdorff and    $U\subseteq\beta X$ is open. Then
$U\subseteq \overline{U\cap \X}$, where the closure is taken in $\beta\X$.
\end{lemma}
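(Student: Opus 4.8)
The claim is that if $X$ is completely regular Hausdorff and $U\subseteq\beta X$ is open, then $U\subseteq\overline{U\cap X}$, closure taken in $\beta X$. The plan is to fix an arbitrary point $p\in U$ and show it lies in the closure of $U\cap X$, i.e. that every open neighbourhood $W$ of $p$ in $\beta X$ meets $U\cap X$. Shrinking if necessary, we may assume $W\subseteq U$, so it suffices to show that every nonempty open $W\subseteq\beta X$ meets $X$. This last statement is exactly the density of $X$ in $\beta X$: by definition of the \v Cech--Stone compactification $X$ is a dense subspace of $\beta X$, so any nonempty open subset of $\beta X$ has nonempty intersection with $X$.

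Spelling out the first reduction: given $p\in U$ and an arbitrary open neighbourhood $W$ of $p$, the set $W\cap U$ is a nonempty (it contains $p$) open subset of $\beta X$, hence by density of $X$ meets $X$; a point of $W\cap U\cap X$ witnesses that $W$ meets $U\cap X$. Since $W$ was an arbitrary neighbourhood of $p$, we conclude $p\in\overline{U\cap X}$, and since $p\in U$ was arbitrary, $U\subseteq\overline{U\cap X}$.

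There is essentially no obstacle here; the only thing to be careful about is that the statement genuinely uses only density of $X$ in $\beta X$ together with openness of $U$, and not, say, any separation or compactness property beyond what complete regularity already guarantees for the existence and density properties of $\beta X$. In fact the same one-line argument shows more generally that in any topological space, a dense subset $X$ and an open set $U$ satisfy $U\subseteq\overline{U\cap X}$; the \v Cech--Stone context is only relevant insofar as it supplies the density of $X$.
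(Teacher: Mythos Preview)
Your proof is correct and essentially the same as the paper's: both rely solely on the density of $X$ in $\beta X$ and the openness of $U$. The paper phrases it by setting $V=U\setminus\overline{U\cap X}$, observing that $V$ is open and disjoint from $X$, and concluding $V=\emptyset$ by density; your neighbourhood-by-neighbourhood argument is the contrapositive of the same observation.
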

\begin{proof}
Consider $V=U\setminus \overline{U\cap \X}$ which is an open subset of $U$
disjoint from $U\cap \X$ and  hence disjoint from $X$. As 
$X$ is dense in $\beta\X$ we have $V=\emptyset$, as required.
\end{proof}

\begin{definition}[cf. 14N4 of \cite{gj}]
    A compact Hausdorff space $X$ is an $F$-space if for every  disjoint  open
    $F_\sigma$ sets $F,G$ in $X$ we have $\overline{F} \cap \overline{G} = \emptyset$.
\end{definition}

\begin{lemma}\label{X*-Fspace} If $X$ is a locally compact, $\sigma$-compact Hausdorff space,
 then $X^*$ is a compact F-space.
\end{lemma}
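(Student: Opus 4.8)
The plan is to prove that if $X$ is locally compact, $\sigma$-compact and Hausdorff, then $X^*$ is a compact $F$-space. Compactness of $X^*$ is automatic since it is a closed subspace of $\beta X$. The real content is the $F$-space property: given two disjoint open $F_\sigma$ subsets $F$ and $G$ of $X^*$, we must show $\overline{F} \cap \overline{G} = \emptyset$ (closures taken in $X^*$, equivalently in $\beta X$ since $X^*$ is closed).

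First I would use a standard reformulation of the $F$-space property: it suffices to find, for the disjoint open $F_\sigma$ sets $F, G \subseteq X^*$, a continuous function $h \colon X^* \to [0,1]$ with $h|F = 0$ and $h|G = 1$; then $\overline F \subseteq h^{-1}[\{0\}]$ and $\overline G \subseteq h^{-1}[\{1\}]$ are disjoint. To build such an $h$, write $F = \bigcup_n F_n$ and $G = \bigcup_n G_n$ as countable increasing unions of sets that are, say, cozero sets of $X^*$ (open $F_\sigma$ sets in a compact space are cozero sets; or one works directly with relatively compact open pieces). The key structural fact I would invoke is that $X$, being locally compact and $\sigma$-compact, is the union of an increasing sequence of open sets $U_n$ with $\overline{U_n}$ compact and $\overline{U_n} \subseteq U_{n+1}$; consequently $X$ is itself $\sigma$-compact and locally compact, and $\beta X \setminus X = X^*$ behaves like a "remainder at infinity."

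The main technical step — and the one I expect to be the principal obstacle — is transferring the separation problem from $X^*$ back to $X$ and exhausting it along the sequence $(U_n)$. Concretely: the sets $F, G$ are traces on $X^*$ of open $F_\sigma$ sets $\widetilde F, \widetilde G$ of $\beta X$; by shrinking I may assume $\widetilde F \cap \widetilde G \cap X^* = \emptyset$, though $\widetilde F$ and $\widetilde G$ may still meet inside $X$. Using the exhaustion $X = \bigcup_n U_n$, one inductively constructs continuous functions on larger and larger compact pieces of $X$, arranging that on the "collar" $X \setminus \overline{U_n}$ the function is close to $0$ on $\widetilde F$ and close to $1$ on $\widetilde G$; because $F$ and $G$ are disjoint in the remainder, their traces outside every compact set are eventually disjoint in a controlled way, which is exactly what lets the inductive construction converge. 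Taking a suitable uniform limit produces a bounded continuous function on $X$ whose Čech–Stone extension, restricted to $X^*$, separates $\overline F$ from $\overline G$. One should cite the classical fact (essentially 14.27 of \cite{gj}, that $\beta X \setminus X$ is an $F$-space when $X$ is locally compact $\sigma$-compact, or equivalently 14N of \cite{gj}) rather than redo it in full; but the sketch above is how the argument runs.

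Alternatively, and perhaps more cleanly, I would reduce to the known case $X = \mathbb{N}$ is replaced by the observation that a locally compact $\sigma$-compact non-compact space maps properly onto $[0,\infty)$ or admits $[0,\infty)$-like exhaustion, so $X^*$ is a continuous image-related quotient of, or closely tied to, the remainder of $[0,\infty)$; but since we do not need that level of precision, the direct route via Lemma~\ref{closure-remainder} and the cozero-set description of open $F_\sigma$ sets, combined with the exhaustion, is the approach I would write up, flagging the convergence of the inductively built functions as the one place where care with the estimates is genuinely required.
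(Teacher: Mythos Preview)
Your proposal is essentially aligned with the paper: the paper's proof consists of the single sentence ``This is Theorem 14.27 of \cite{gj},'' and you yourself suggest citing exactly that result rather than reproducing the argument. So in the end you and the paper agree.

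That said, your surrounding sketch is looser than it should be if you actually intended to write it out. The reduction ``open $F_\sigma$ sets in a compact space are cozero sets'' is false in general (it holds in normal spaces only when the space is in addition perfectly normal, which $X^*$ typically is not); what is true and sufficient is that disjoint cozero sets are completely separated in an $F$-space, and that open $F_\sigma$ sets are countable unions of zero sets, which one separates stage by stage. Your inductive construction along the exhaustion $(U_n)$ is the right picture, but the passage ``their traces outside every compact set are eventually disjoint in a controlled way'' hides the actual work: one needs that any bounded continuous function on $X$ extends to $\beta X$, and that two cozero sets of $X^*$ lift to cozero sets of $\beta X$ whose intersection with $X$ can be made disjoint outside some compact set. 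If you ever need to present the argument rather than cite it, tighten these points; otherwise the bare citation, as the paper does, is the correct choice.
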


\begin{proof}
This is Theorem 14.27 of \cite{gj}. 
\end{proof}

We note that the \v Cech-Stone extension operator $\beta : C_b(X) \rightarrow C(\beta X)$
 that maps a bounded continuous function on $X$ to its
  \v Cech-Stone extension is an onto isometry and both a lattice and algebra isomorphism. 
  We also remark that if $f \in C_0(X)$ then $(\beta f)|X^* = 0$.

\subsection{Algebras of functions on almost $P$-spaces}

\begin{definition}\label{def-pspace-realc} A topological space is called an almost $P$-space if
all nonempty $G_\delta$ subsets of it have nonempty interior. A
topological space is realcompact if it can be homeomorphically embedded as
a closed subspace of a product of real lines (cf. 11.12 of \cite{gj}). 
\end{definition}

For more information on almost $P$-spaces see \cite{levy}.

\begin{lemma} \label{remainder-P-space}
    Let $X$ be a locally compact, $\sigma$-compact noncompact Hausdorff space. Then $X^*$ is an almost $P$-space
\end{lemma}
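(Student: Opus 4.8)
The plan is to show that every nonempty $G_\delta$ subset of $X^*$ has nonempty interior. So let $G = \bigcap_{n \in \N} U_n$ be a nonempty $G_\delta$ in $X^*$, where each $U_n$ is open in $X^*$, and fix a point $p \in G$. First I would replace the $U_n$ by cozero sets: since $X^*$ is compact Hausdorff, hence normal, and $p \in U_n$, there is a cozero set (equivalently an open $F_\sigma$) $W_n$ in $X^*$ with $p \in W_n \subseteq U_n$; moreover I can arrange $W_{n+1} \subseteq W_n$ by intersecting finitely many, and cozero sets are closed under finite intersection. Then $\bigcap_n W_n \subseteq G$, so it suffices to find nonempty interior inside $\bigcap_n W_n$; in particular I have reduced to the case where the $U_n$ are a decreasing sequence of cozero sets of $X^*$.

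The key point is that a decreasing sequence of cozero sets of $X^*$ is itself, in a suitable sense, controlled by the $F$-space property. Write $W_n = \coz(g_n)$ for $g_n \in C(X^*)$ with $0 \le g_n \le 1$, which after scaling I may take to satisfy $g_{n+1} \le g_n$. Form $g = \sum_{n} 2^{-n} g_n \in C(X^*)$; then $\coz(g) = W_0$ and I want to understand $\bigcap_n W_n = \bigcap_n \coz(g_n)$. Here I would use Lemma \ref{X*-Fspace}: $X^*$ is a compact $F$-space. The relevant consequence of being an $F$-space is that for a decreasing sequence of cozero sets $W_n$ with $\bigcap_n W_n \ne \emptyset$, the set $\bigcap_n \overline{W_n}$ is a neighbourhood of $\bigcap_n W_n$, or more directly: in a compact $F$-space, disjoint cozero sets have disjoint closures, and by an induction/diagonal argument one shows that if $\bigcap_n W_n \ne\emptyset$ then $\mathrm{int}\big(\bigcap_n \overline{W_n}\big) \ne \emptyset$ and this interior is contained in $\bigcap_n W_n$ after a small shrinking. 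Concretely, consider $F = \coz(g) \cap \{x : g_n(x) > 0 \text{ for all } n\}$; the complement of $\bigcap_n W_n$ in $\overline{W_0}$ is the union of the open $F_\sigma$ sets $\overline{W_0}\setminus \overline{W_n}$, and one uses the $F$-space property to separate this countable union from $p$.

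The cleaner route, which I expect to be the one actually used, is this. The hypothesis that $X$ is locally compact, $\sigma$-compact and noncompact gives a partition of unity / exhaustion: write $X = \bigcup_n K_n$ with $K_n$ compact, $K_n \subseteq \mathrm{int}(K_{n+1})$, and $K_0 = \emptyset$, and pick $h_n \in C_0(X)$, $0 \le h_n \le 1$, with $h_n = 1$ on $K_{n+1}\setminus \mathrm{int}(K_n)$ and $\supp h_n \subseteq \mathrm{int}(K_{n+2})\setminus K_{n-1}$. Given the decreasing cozero sets $W_n = \coz(g_n)$ of $X^*$ with common point $p$, lift the $g_n$ to bounded continuous functions $\tilde g_n$ on $X$ (using that $C(\beta X) \to C(X^*)$ is onto), and build a single function $f \in C_b(X)$ by patching: on the ``block'' between $K_n$ and $K_{n+1}$ use $\tilde g_n$, so that $f$ agrees on larger and larger tails with each $\tilde g_n$ modulo $C_0(X)$; then $(\beta f)|X^*$ equals $g_n$ on $X^*$ up to something vanishing — more precisely $(\beta f)|X^* = g_n|X^*$ is false in general, but $\coz((\beta f)|X^*) \subseteq W_n$ fails too, so one instead arranges $\coz((\beta f)|X^*) = \bigcap_n W_n$ by making the block-$n$ piece equal to $\min(\tilde g_0,\dots,\tilde g_n)$. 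Then $\coz((\beta f)|X^*)$ is a cozero subset of $\bigcap_n W_n \subseteq G$, and in an almost-$P$-space-to-be the point is that a nonempty cozero set of $X^*$ has nonempty interior — but that is what we want to prove, so instead one directly checks $\coz((\beta f)|X^*)$ is open in $X^*$ with $p$ in its interior because it is the trace of an open set of $\beta X$. Since $p\in \coz((\beta f)|X^*)$ and this set is relatively open in $X^*$ and contained in $G$, we are done: $G$ has nonempty interior.

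The main obstacle is the patching construction and verifying that the trace on $X^*$ of the cozero set of $\beta f$ is exactly $\bigcap_n W_n$ (or at least a nonempty open subset of it containing $p$): one must check that the ``diagonal'' function $f$ built from the $\tilde g_n$ along the exhaustion $(K_n)$ has the property that, modulo $C_0(X)$ — which is where $\beta X$ and $X^*$ see no difference — $f$ behaves on every tail like every $g_n$ simultaneously. This is exactly the kind of argument that makes $X^*$ an almost $P$-space and is the analogue, for general locally compact $\sigma$-compact noncompact $X$, of the classical fact that $\N^* = \beta\N\setminus\N$ is an almost $P$-space. I would carry it out by induction on $n$, at stage $n$ defining $f$ on $\mathrm{int}(K_{n+2})\setminus K_n$ and checking that the already-defined part plus the new part stays continuous and stays equal to $\min_{k\le n}\tilde g_k$ outside a compact set.
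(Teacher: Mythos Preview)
Your patching construction has a real gap at exactly the point you flag as ``the main obstacle'': you assert $p\in\coz((\beta f)|X^*)$, but nothing in the construction forces this. On block $n$ of the exhaustion you set $f=\min_{k\le n}\tilde g_k$; even after normalising so that $g_k(p)=1$ for every $k$, the value $(\beta f)(p)$ is a limit along a net $x_\alpha\to p$ with $x_\alpha$ in block $n(\alpha)$ and $n(\alpha)\to\infty$, so you are taking minima over unboundedly many lifts and you have no uniform control. A toy case makes this concrete: take $X=\N$, $W_n=A_n^*$ with $A_n=\{0\}\cup\{n+1,n+2,\dots\}$; then your $f$ is the indicator of $\{m:m\in A_m\}=\{0\}$, hence $(\beta f)|\N^*=0$ identically, even though $\bigcap_n W_n=\N^*$. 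So the constructed cozero set can be empty, let alone fail to contain $p$. Your $F$-space paragraph does not help here either: the $F$-space property of $X^*$ by itself does not yield that a countable intersection of cozero sets has nonempty interior.

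The paper's argument sidesteps all of this. It first reduces, as you do, to a nonempty zero set $Z\subseteq X^*$ containing $p$ (take $f_n\in C_1(X^*)$ with $f_n(y)=0$ and $f_n|(X^*\setminus U_n)=1$, and put $Z$ equal to the zero set of $\sum_n 2^{-n}f_n$). Then it simply invokes a result of Fine and Gillman: if $X$ is locally compact and realcompact (and a locally compact $\sigma$-compact space is Lindel\"of, hence realcompact), then every zero set of $X^*$ is the closure of its interior, so a nonempty zero set has nonempty interior. That is the missing ingredient your diagonal construction is trying to reproduce; if you want to avoid the citation you would need to prove this Fine--Gillman statement, and the argument for it is not the block-by-block patching you describe.
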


\begin{proof}
First we note that any such $X$ is realcompact because it is Lindel\"of and Lindel\"of spaces 
are realcompact by 8.2 of \cite{gj}.
Let $Y\subseteq X^*$ be a nonempty $G_\delta$ set i.e., there are open 
$U_n\subseteq X^*$ such that $Y=\bigcap_{n\in \N} U_n$. Let $y\in Y$ and $f_n\in C_1(X^*)$
be such that $f_n|(X^* \setminus U_n)=1$ and $f_n(y)=0$ for every $n\in \N$. Then 
the zero set of $\sum_{n\in \N} 2^{-n}f_n$ is included in $Y$ and is nonempty as it contains $y$.
Lemma 3.1 of \cite{fineg} says that if $X$ is realcompact and locally compact, then any zero set in $\beta X\setminus X$
is the closure of its interior. In particular, it has a nonempty interior if it is nonempty. 
So, $Y$ has a nonempty interior and we are done.
\end{proof}

\begin{lemma}\label{dense-P}
Suppose that $K$ is a compact Hausdorff almost $P$-space with
an open basis $\B$ and suppose that $\A\subseteq C(K)$
is a closed subalgebra of $C(K)$ such that for every $B\in \B$ there is
$f\in \A\setminus\{0\}$ such that $\coz f \subseteq B$. Then $\nabla\A$ is an almost $P$-space.
\end{lemma}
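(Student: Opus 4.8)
The plan is to show directly that every nonempty $G_\delta$ subset of $\nabla\A$ has nonempty interior, using the correspondence between $\A$ and $C(\nabla\A)$ from Proposition \ref{proposition-Pi} and the almost $P$-space property of $K$ together with the richness hypothesis on $\A$. Recall that by Proposition \ref{proposition-Pi} (1)--(2) the operator $T_\A\colon C(\nabla\A)\to C(K)$ is an isometric multiplicative embedding with image $\A$, and by Lemma \ref{lemma-A-maximal} preimages under $\Pi_\A$ give a bijection between subsets of $\nabla\A$ and $\A$-maximal sets, which carries open sets to open $\A$-maximal sets and closed sets to closed $\A$-maximal sets.

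First I would reduce to zero sets. A nonempty $G_\delta$ set $Y\subseteq\nabla\A$ contains a nonempty zero set of some $g\in C(\nabla\A)$: pick $y\in Y$, write $Y=\bigcap_n U_n$ with $U_n$ open, choose $g_n\in C_1(\nabla\A)$ with $g_n(y)=0$ and $g_n|(\nabla\A\setminus U_n)=1$, and let $g=\sum_n 2^{-n}g_n$; then $g^{-1}[\{0\}]\subseteq Y$ is a nonempty zero set. So it suffices to show every nonempty zero set in $\nabla\A$ has nonempty interior. Let $f=T_\A(g)\in\A$, so $f=g\circ\Pi_\A$ and $f^{-1}[\{0\}]=\Pi_\A^{-1}[g^{-1}[\{0\}]]$ is an $\A$-maximal set; it is nonempty since $g^{-1}[\{0\}]$ is. Now $f^{-1}[\{0\}]$ is a nonempty zero set in $K$, hence a nonempty $G_\delta$ in $K$, so by the almost $P$-space hypothesis on $K$ it has nonempty interior; pick $B\in\B$ with $\emptyset\neq B\subseteq \operatorname{int}(f^{-1}[\{0\}])\subseteq f^{-1}[\{0\}]$.

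Next I would use the hypothesis on $\A$: there is $h\in\A\setminus\{0\}$ with $\coz h\subseteq B\subseteq f^{-1}[\{0\}]$. Write $h=\tilde h\circ\Pi_\A$ for $\tilde h=T_\A^{-1}(h)\in C(\nabla\A)$, which is nonzero, so its cozero set $\coz\tilde h$ is a nonempty open subset of $\nabla\A$. I claim $\coz\tilde h\subseteq g^{-1}[\{0\}]$, which finishes the proof since then the nonempty open set $\coz\tilde h$ witnesses that the zero set $g^{-1}[\{0\}]$, and hence $Y$, has nonempty interior. To verify the claim, note $\coz h=\Pi_\A^{-1}[\coz\tilde h]$ is an $\A$-maximal set contained in the $\A$-maximal set $f^{-1}[\{0\}]$; taking images under $\Pi_\A$ and using that $\Pi_\A$ restricted to these $\A$-maximal sets is the inverse bijection (Lemma \ref{lemma-A-maximal} (2)), we get $\coz\tilde h=\Pi_\A[\coz h]\subseteq\Pi_\A[f^{-1}[\{0\}]]=g^{-1}[\{0\}]$, where the last equality holds because $f^{-1}[\{0\}]$ is $\A$-maximal and $g=f\circ\Pi_\A^{-1}$ on the relevant points — more precisely, $x\in\coz h$ implies $f(x)=0$, i.e.\ $g(\Pi_\A(x))=0$.

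The step I expect to require the most care is the bookkeeping with $\A$-maximal sets: one must make sure that the set $B\subseteq f^{-1}[\{0\}]$ obtained from the almost $P$-space property of $K$ (which need not itself be $\A$-maximal) is only used as a conduit to produce $h\in\A$ with $\coz h\subseteq f^{-1}[\{0\}]$, after which one works entirely with the $\A$-maximal sets $\coz h$ and $f^{-1}[\{0\}]$ and pushes forward under $\Pi_\A$. The almost $P$-space property of $K$ and the density-type hypothesis on $\A$ are each used exactly once, and the rest is the dictionary of Section 3.
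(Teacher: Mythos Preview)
Your proof is correct and follows essentially the same route as the paper's: pull back the $G_\delta$ to $K$, use the almost $P$-space property there to find a basic open set inside, invoke the hypothesis to get a nonzero $h\in\A$ with cozero set inside, and push its cozero set forward via $T_\A^{-1}$ to get a nonempty open subset of the original $G_\delta$. The only cosmetic difference is that you first reduce to a zero set in $\nabla\A$ before pulling back, whereas the paper pulls back the open sets $U_n$ directly and works with $\bigcap_n \Pi_\A^{-1}[U_n]$; this extra step is harmless but unnecessary.
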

\begin{proof}
Let $F=\bigcap_{n\in \N}U_n\subseteq\nabla\A$ be a nonempty $G_\delta$ set and $x\in F$, where
$U_n$'s are open. Consider $y\in K$ and $V_n\subseteq K$ such that $\Pi_\A(y)=x$
and $V_n=\Pi_\A^{-1}[U_n]$. The set $\bigcap_{n\in \N} V_n$ is a nonempty $G_\delta$ 
set in the almost $P$-space $K$, and hence
has nonempty interior. Let $B\in \B$ be such that 
$B \subseteq \operatorname{int}(\bigcap_{n \in \N} V_n)$ and let $f\in \A\setminus\{0\}$ be such that $\coz f \subseteq B$.
Let $g\in C(\nabla\A)$ be such that  $T_{\A} (g)=f$ (that is $f=g\circ \Pi_\A$) which exists by
Proposition \ref{proposition-Pi} (1). If $g(x')\not=0$ for some 
$x'=\Pi_\A(y')$ for $y'\in K$, then $y'\in V_n$ for each $n\in \N$, and so
$x'\in F$, and hence $\coz g$ is included in $F$ which proves
that $F$ has nonempty interior as needed to check that $K$ is an almost $P$-space.
\end{proof}

\begin{lemma}\label{disjoint-P} Suppose that $K$ is
a compact Hausdorff almost $P$-space with no isolated points and $V\subseteq K$ is open and nonempty.
Then $V$ contains a pairwise disjoint family of cardinality $\cc$ of open nonempty sets. 
\end{lemma}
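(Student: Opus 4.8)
The plan is to build the pairwise disjoint family of cardinality $\cc$ by a transfinite recursion of length $\cc$, at each step peeling off a nonempty open set disjoint from all previously chosen ones. So suppose $\alpha < \cc$ and we have already constructed pairwise disjoint nonempty open sets $(W_\xi)_{\xi < \alpha}$ inside $V$; I claim we can find a nonempty open $W_\alpha \subseteq V$ disjoint from $\overline{\bigcup_{\xi<\alpha} W_\xi}$. Granting this claim, the recursion produces $(W_\xi)_{\xi<\cc}$, which is the desired family.

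To prove the claim, I would use that $K$ has no isolated points, so $V$ is infinite, and in fact $V$ contains a countably infinite discrete (in itself) subset, hence a countable pairwise disjoint family $\{O_n : n \in \N\}$ of nonempty open subsets of $V$. More usefully, I would first build \emph{inside the recursion} not a single open set but enough room: the point is that $V$ itself contains a pairwise disjoint family of $\cc$ many nonempty open sets once we know $V$ contains \emph{some} infinite pairwise disjoint family of open sets, because in an almost $P$-space we can split. Concretely, pick a pairwise disjoint sequence $(O_n)_{n\in\N}$ of nonempty open subsets of $V$ and, for each $n$, a function $f_n \in C_1(K)$ with $f_n \not\equiv 0$ and $\coz f_n \subseteq O_n$. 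For each $A \subseteq \N$ put $g_A = \sum_{n \in A} 2^{-n} f_n \in C(K)$; its zero set $Z_A = g_A^{-1}[\{0\}] \supseteq \bigcup_{n\notin A}\coz f_n$. For $A \ne B$ the sets $\coz f_n$ ($n \in A \setminus B$) are disjoint from $Z_A$'s complement in a way that lets us separate: more simply, consider $h_A = \sum_{n\in A} 2^{-n} f_n - \sum_{n\notin A} 2^{-n} f_n$; these are continuous, and for $A\ne B$ the functions $h_A, h_B$ differ on some $\coz f_n$. The cleanest route: the family $\{A \subseteq \N : A \text{ infinite, co-infinite}\}$ contains an almost disjoint family $\{A_i : i < \cc\}$; then the open sets $U_i = \{x \in K : g_{A_i}(x) > 0, \ g_{\N\setminus A_i}(x) = 0\}$ — wait, these need not be open. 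Let me instead take the almost disjoint family and set, for each $i$, $U_i = \operatorname{int}\big(\bigcap_{n \in \N\setminus A_i} f_n^{-1}[\{0\}]\big) \cap \coz f_{m(i)}$ for a suitable $m(i) \in A_i$ chosen so that $m(i) \notin A_j$ for the relevant $j$'s; by the almost disjointness and the almost $P$-space property (the $G_\delta$ set $\bigcap_{n\notin A_i} f_n^{-1}[\{0\}]$ has nonempty interior and contains $\coz f_{m}$ for every $m\in A_i$, since $\coz f_m$ is disjoint from each $\coz f_n$ with $n\ne m$), each $U_i$ is open and nonempty, and $U_i \cap U_j = \emptyset$ when $m(i) \ne m(j)$ and $m(i)\notin A_j$, $m(j)\notin A_i$.

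I expect the main obstacle to be exactly this last bookkeeping: an almost disjoint family of subsets of $\N$ gives pairwise \emph{finite} intersections, not empty ones, so the naive cozero sets $\coz f_{m}$ for $m$ ranging over the $A_i$'s will overlap. The fix is to use the almost $P$-space hypothesis twice: once to see that for an \emph{infinite} $A \subseteq \N$ the $G_\delta$ set $Z_A := \bigcap_{n\in\N\setminus A} f_n^{-1}[\{0\}]$ has nonempty interior $W_A$, and again to note $W_A \supseteq \coz f_n$ whenever $n \in A$ — no, that inclusion is false in general. The genuinely correct approach, which I would carry out, is the transfinite recursion of the first paragraph combined with the observation that at stage $\alpha < \cc$ the closure $\overline{\bigcup_{\xi<\alpha}W_\xi}$ cannot be all of $V$: indeed $V \setminus \overline{\bigcup_{\xi<\alpha} W_\xi}$ is a nonempty open set provided $\bigcup_{\xi<\alpha}W_\xi$ is not dense in $V$, and density would force (since each $W_\xi$ is open and they are disjoint, so $\alpha < \mathfrak{c}$ many of them) — here one uses that $V$, being a nonempty open subset of a compact almost $P$-space with no isolated points, cannot be covered by the closure of fewer than $\cc$ many of its open subsets: this is where the almost $P$-space property is essential, via the fact that $V$ contains a copy of a nontrivial closed subspace behaving like $\N^*$-style spaces, or more elementarily via a cardinality/character argument on $G_\delta$'s. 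I would then conclude that the recursion runs for all $\alpha < \cc$, and $\{W_\xi : \xi < \cc\}$ is the required pairwise disjoint family of nonempty open subsets of $V$. $\square$
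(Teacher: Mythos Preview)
Your proposal has a genuine gap. The transfinite recursion you frame the argument around hinges on the claim that at each limit stage $\alpha < \cc$ the union $\bigcup_{\xi<\alpha} W_\xi$ fails to be dense in $V$, and you never prove this; the justification you offer (``$V$ \ldots cannot be covered by the closure of fewer than $\cc$ many of its open subsets'', or a vague ``cardinality/character argument on $G_\delta$'s'') is hand-waving. Note that nothing prevents a maximal pairwise disjoint family of nonempty open subsets of $V$ from being small unless you already know the cellularity of $V$ is at least $\cc$ --- which is essentially the content of the lemma. So the recursion as written is circular: the step you need is the conclusion you are trying to reach.

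Your middle attempt via an almost disjoint family also does not go through. Once you unwind it, the set you write down reduces to $U_i = \coz f_{m(i)}$, and disjointness of the $U_i$'s forces $i \mapsto m(i)$ to be injective from $\cc$ into $\N$. More generally, almost disjoint families give \emph{finite} overlaps, and the corresponding $G_\delta$ sets $\bigcap_{n\notin A_i} f_n^{-1}[\{0\}]$ for $i\ne j$ share all points lying outside every $\coz f_n$, so their interiors need not be disjoint.

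The paper's proof avoids the limit-stage problem entirely by a direct Cantor-tree construction: using regularity and the absence of isolated points, build nonempty open sets $U_s$ for $s\in\{0,1\}^{<\omega}$ with $\overline{U_{s^\frown(0)}}, \overline{U_{s^\frown(1)}}\subseteq U_s$ and $\overline{U_{s^\frown(0)}}\cap\overline{U_{s^\frown(1)}}=\emptyset$. For each branch $\sigma\in\{0,1\}^\N$ the set $H_\sigma=\bigcap_n U_{\sigma|n}$ is a nonempty (by compactness) $G_\delta$, hence has nonempty interior by the almost $P$-space hypothesis, and the tree structure makes the $H_\sigma$'s pairwise disjoint. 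This yields the $\cc$ many disjoint open sets in one stroke, with no transfinite induction and no limit stages to worry about.
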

\begin{proof}
Since $K$ has no isolated points and is compact (and so regular), every open set contains
 two open subsets whose closures are disjoint.
This allows us to built a family of nonempty open subsets $\{U_s: s\in \{0,1\}^{<\omega}\}$ of $V$
such that  $U_\emptyset=V$, $\overline{U_{s^\frown(0)}}, \overline{U_{s^\frown(1)}}\subseteq U_s$
and $\overline{U_{s^\frown(0)}}\cap\overline{U_{s^\frown(1)}}=\emptyset$ for every  $s\in \{0,1\}^{<\omega}$.
Using the  compactness of $K$ we have the existence of some
 $x_\sigma \in \bigcap_{n\in \N}\overline{U_{\sigma|n}}\subseteq  \bigcap_{n\in \N}U_{\sigma|n}=H_\sigma$
for any $\sigma\in \{0,1\}^\N$.
Since $K$ is an almost $P$-space, this means that the $G_\delta$ sets $H_\sigma$ have nonempty interiors.
Clearly they must be pairwise disjoint, since for every distinct $\sigma, \sigma'\in  \{0,1\}^\N$
there is $n\in \N$ such that $\sigma|n=\sigma'|n$ and $\sigma|(n+1)=\sigma|n^\frown (i)$
 and $\sigma'|(n+1)=\sigma|n^\frown (1-i)$
for some $i\in \{0,1\}$. Then $H_\sigma\subseteq U_{\sigma|n^\frown (i)}$ and 
$H_{\sigma'}\subseteq U_{\sigma|n^\frown (1-i)}$
however $\overline{U_{s^\frown(0)}}\cap\overline{U_{s^\frown(1)}}=\emptyset$ for every  $s\in \{0,1\}^{<\omega}$.

\end{proof}

\subsection{Cantor systems of functions and abundant families}

\begin{definition} \label{cantor-system} Let $\X$ be a locally compact, $\sigma$-compact Hausdorff topological space.
    A system $(f_s)_{s \in \{0,1\}^{< \omega}}$ of functions in $C_1(\X)$ is called a Cantor system if
    \begin{enumerate}
        \item $\supp f_s$ is a compact subset of $\X$ for any $s \in \{0,1\}^{< \omega}$,
        \item $(f_s)_{s \in \{0,1\}^{< \omega}}$ is pairwise disjoint,
        \item for every $x\in X$ there is an open $x\in U\subseteq X$ such that  the set 
        $$\{s \in \{0,1\}^{< \omega}: \supp f_s \cap U \neq \emptyset\}$$ is finite.
    \end{enumerate}
    For a Cantor system $(f_s)_{s \in \{0,1\}^{< \omega}}$ and $\sigma \in \{0,1\}^{\N}$ we define
    \begin{align*}
        f_\sigma = \sum_{n \in \N} f_{\sigma|n}.
    \end{align*}
\end{definition}

Note that the functions $f_\sigma$'s are well defined elements of $C_1(\X)$ 
as the sum in their definition is locally finite by (3) 
and for every $x \in \X$ there is at most one $n \in \N$
such that $f_{\sigma|n}(x) \neq 0$ by (2).
In the next lemma we show that $(\beta f_\sigma) | \X^*$'s are moreover disjoint.

\begin{lemma} \label{cantor-disjoint} Let $\X$ be a locally compact, $\sigma$-compact Hausdorff topological space.
    Let $(f_s)_{s \in \{0,1\}^{<\omega}}$ be a Cantor system
     of functions in $C_1(\X)$. Then $((\beta f_\sigma) | \X^*)_{\sigma \in \{0,1\}^{\N}}$
      are pairwise disjoint in $\X^*$.
\end{lemma}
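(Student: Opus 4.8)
The statement asserts that for a Cantor system $(f_s)_{s\in\{0,1\}^{<\omega}}$ on a locally compact $\sigma$-compact $\X$, the functions $(\beta f_\sigma)|\X^*$ for distinct $\sigma\in\{0,1\}^\N$ have pairwise disjoint cozero sets in $\X^*$. The plan is to fix distinct $\sigma,\sigma'\in\{0,1\}^\N$, let $n\in\N$ be least with $\sigma(n)\neq\sigma'(n)$, and write $s=\sigma|n=\sigma'|n$. Then $f_\sigma=\sum_{k\le n}f_{\sigma|k}+\sum_{k>n}f_{\sigma|k}$, where the first sum is supported inside $\bigcup_{k\le n}\supp f_{\sigma|k}$ and, for $k>n$, every $\sigma|k$ extends $s^\frown(\sigma(n))$, while every $\sigma'|k$ (for $k>n$) extends $s^\frown(1-\sigma(n))$. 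So the ``tails'' of $f_\sigma$ and $f_{\sigma'}$ live on functions indexed by incomparable nodes; combined with the finite initial parts this must be turned into a statement about $\X^*$.

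The key point is that disjointness on $\X$ itself is not enough (the finitely many initial $f_{\sigma|k}$, $f_{\sigma'|k}$ for $k\le n$ may overlap), but these finitely many functions have \emph{compact} support by condition (1) of Definition \ref{cantor-system}, hence lie in $C_0(\X)$, and therefore their $\beta$-extensions vanish on $\X^*$ (as noted after Lemma \ref{X*-Fspace}). Concretely, I would set $g_\sigma=\sum_{k>n}f_{\sigma|k}$ and $g_{\sigma'}=\sum_{k>n}f_{\sigma'|k}$; then $f_\sigma-g_\sigma$ and $f_{\sigma'}-g_{\sigma'}$ are finite sums of compactly supported functions, so $(\beta f_\sigma)|\X^*=(\beta g_\sigma)|\X^*$ and likewise for $\sigma'$. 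Thus it suffices to show $(\beta g_\sigma)|\X^*$ and $(\beta g_{\sigma'})|\X^*$ are disjoint in $\X^*$, and for this it is enough that $g_\sigma$ and $g_{\sigma'}$ are disjoint as functions on $\X$, since $\beta$ is an algebra (equivalently lattice) isomorphism and hence preserves disjointness: $\coz(\beta g_\sigma)=\overline{\coz g_\sigma}^{\,\beta\X}$ up to the behaviour on $\X^*$, and more simply $\beta(g_\sigma\cdot g_{\sigma'})=(\beta g_\sigma)(\beta g_{\sigma'})$, so if $g_\sigma\cdot g_{\sigma'}=0$ on $\X$ then $(\beta g_\sigma)(\beta g_{\sigma'})=0$ everywhere, in particular on $\X^*$.

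It then remains to check $g_\sigma$ and $g_{\sigma'}$ are disjoint on $\X$. Both are (locally finite) sums of nonnegative functions from the pairwise disjoint family, so $\coz g_\sigma=\bigcup_{k>n}\coz f_{\sigma|k}$ and $\coz g_{\sigma'}=\bigcup_{k>n}\coz f_{\sigma'|k}$; since for $k,k'>n$ the indices $\sigma|k$ and $\sigma'|k'$ are always distinct (they extend the incomparable nodes $s^\frown(\sigma(n))$ and $s^\frown(\sigma'(n))$ respectively), condition (2) of Definition \ref{cantor-system} gives $\coz f_{\sigma|k}\cap\coz f_{\sigma'|k'}=\emptyset$, whence $\coz g_\sigma\cap\coz g_{\sigma'}=\emptyset$.

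**Main obstacle.** The only subtle point is the interplay between the finite ``overlapping'' head and the \v Cech-Stone remainder: one must justify that modifying a bounded continuous function by a compactly supported one does not change the restriction of its $\beta$-extension to $\X^*$, and that disjointness of functions on $\X$ passes through $\beta$ to disjointness on all of $\beta\X$ (hence on $\X^*$). Both facts are recorded in the text surrounding Lemma \ref{X*-Fspace} and in the discussion of $\beta$ as a lattice/algebra isomorphism in Section 2, so the argument is short; the bookkeeping with prefixes of $\sigma$ and $\sigma'$ is the only thing to write carefully.
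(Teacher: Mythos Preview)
Your proposal is correct and follows essentially the same route as the paper's proof: split off the finite common head (which lies in $C_0(\X)$ and hence vanishes on $\X^*$), and observe that the remaining tails are disjoint on $\X$ by condition (2) of a Cantor system. The only cosmetic difference is that you pass disjointness through $\beta$ using multiplicativity ($\beta(g_\sigma g_{\sigma'})=(\beta g_\sigma)(\beta g_{\sigma'})$), whereas the paper argues via density of $\X$ in $\beta\X$ (Lemma~\ref{closure-remainder}); both are immediate.
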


\begin{proof}
    Let $\sigma_1,\sigma_2$ be two distinct elements of $\{0,1\}^{\N}$. 
    Let $n_0 \in \N$ be the least integer such that $\sigma_1(n_0) \neq \sigma_2(n_0)$.
     For $i = 1,2$ denote  $g_i = \sum_{n>n_0}^{\infty} f_{\sigma_i | n}$. 
     Then $(\beta f_{\sigma_i}) | \X^* = (\beta g_i) | \X^*$
      as $f_{\sigma_i} - g_i = \sum_{n=0}^{n_0} f_{\sigma_i | n} \in C_0(\X)$ by property (1) of a Cantor system. 
      Further, $g_1$ and $g_2$ are disjoint in $\X$ by property (2) of a Cantor system. 
      Let $U = \{x \in \beta \X: (\beta g_1)(x) \neq 0 \text{ and } (\beta g_2)(x) \neq 0\}$. 
      Then $U$ is open, and thus $U  \subseteq \overline{U \cap \X}$ by Lemma \ref{closure-remainder}.
       But $U \cap \X$ is empty as $g_1$ and $g_2$ are disjoint in $\X$, 
       and thus $U$ is empty as well. Hence, $\beta g_i$, $i=1,2$, 
       are disjoint in $\beta \X$ and $(\beta f_{\sigma_i}) | \X^* = (\beta g_i) | \X^*$, $i = 1,2$,
        are disjoint in $\X^*$ as required.    
\end{proof}

\begin{definition} \label{def-abundant}
Let $X$ be a locally compact, $\sigma$-compact Hausdorff topological space.
A family of functions $\F\subseteq C(\X^*)$ is called abundant if the following holds:
whenever $(f_s)_{s \in \{0,1\}^{< \omega}}$ 
is a Cantor system of functions in $C_1(\X)$, then there are distinct
 $\sigma_k \in \{0,1\}^\N$, $k \in \N$, such that $(\beta f_{\sigma_k})|\X^* \in \F$ for each $k \in \N$.
\end{definition}

\begin{lemma}\label{abundant-equi} Let $X$ be a locally compact, $\sigma$-compact Hausdorff topological space.
A family of functions $\F\subseteq C(\X^*)$ is  abundant if and only if the following holds:
whenever $(f_s)_{s \in \{0,1\}^{< \omega}}$ is a Cantor 
    system of functions in $C_1(\X)$, then there is $\sigma \in \{0,1\}^\N$ such that $(\beta f_\sigma)|\X^* \in \F$.
\end{lemma}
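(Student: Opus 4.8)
The forward direction is immediate: if $\F$ is abundant and $(f_s)_{s\in\{0,1\}^{<\omega}}$ is any Cantor system in $C_1(\X)$, Definition \ref{def-abundant} already supplies distinct $\sigma_k\in\{0,1\}^\N$ with $(\beta f_{\sigma_k})|\X^*\in\F$, and any single one of them witnesses the weaker condition. So the whole content is the reverse implication.

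For that, assume the weaker condition and fix a Cantor system $(f_s)_{s\in\{0,1\}^{<\omega}}$ in $C_1(\X)$. The plan is to split the binary tree $\{0,1\}^{<\omega}$ into infinitely many pairwise incomparable subtrees, turn each into its own Cantor system by grafting, and apply the hypothesis once to each in order to produce countably many distinct branches $\sigma$ with $(\beta f_\sigma)|\X^*\in\F$. Concretely, for $k\in\N$ let $t_k\in\{0,1\}^{k+1}$ be the node consisting of $k$ ones followed by a zero; these are pairwise incomparable since $t_j(j)=0\neq 1=t_k(j)$ whenever $j<k$. For each $k$ set $g^k_s=f_{t_k{}^\frown s}$ for $s\in\{0,1\}^{<\omega}$. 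Verifying that $(g^k_s)_{s\in\{0,1\}^{<\omega}}$ is a Cantor system is routine: property (1) of Definition \ref{cantor-system} is inherited directly; property (2) follows from the pairwise disjointness of $(f_s)_s$ together with the injectivity of $s\mapsto t_k{}^\frown s$; and for property (3), if $U$ is as in (3) for $(f_s)_s$ at a point $x$, then $\{s:\supp g^k_s\cap U\neq\emptyset\}$ is carried injectively by $s\mapsto t_k{}^\frown s$ into the finite set $\{s':\supp f_{s'}\cap U\neq\emptyset\}$.

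Now apply the assumed weaker condition to each $g^k$ to get $\tau_k\in\{0,1\}^\N$ with $(\beta g^k_{\tau_k})|\X^*\in\F$, and put $\sigma_k=t_k{}^\frown\tau_k\in\{0,1\}^\N$. Writing out the defining series of $f_{\sigma_k}$ and of $g^k_{\tau_k}$ and matching terms, one checks that $f_{\sigma_k}-g^k_{\tau_k}=\sum_{m=0}^{k}f_{\sigma_k|m}$, a finite sum of functions each with compact support, hence an element of $C_0(\X)$. Since $\beta$ is linear and $(\beta h)|\X^*=0$ for every $h\in C_0(\X)$, we conclude $(\beta f_{\sigma_k})|\X^*=(\beta g^k_{\tau_k})|\X^*\in\F$. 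Finally the $\sigma_k$ are distinct, because for $j<k$ we have $\sigma_j(j)=t_j(j)=0\neq 1=t_k(j)=\sigma_k(j)$. This yields the required countable family, showing that $\F$ is abundant.

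The one place needing a little care is the series bookkeeping in the last paragraph — getting exactly right which initial segments of $\sigma_k$ fail to appear in the series for $g^k_{\tau_k}$, so that the difference $f_{\sigma_k}-g^k_{\tau_k}$ is a finite sum of compactly supported functions and therefore invisible after restriction to $\X^*$. Everything else is a direct unravelling of the definitions of a Cantor system and of abundance, so I do not expect any genuine obstacle.
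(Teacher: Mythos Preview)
Your proof is correct and follows the same core idea as the paper's: graft the Cantor system onto a subtree above a chosen node $t$, apply the one-branch hypothesis there, and concatenate to recover a branch of the original tree. The execution differs in two minor but instructive ways. First, the paper proceeds recursively, at step $n$ choosing any $t$ that is not an initial segment of the previously found $\sigma_0,\dots,\sigma_{n-1}$; you instead fix all the nodes $t_k$ at once as a pairwise incomparable antichain, which is tidier and avoids the induction. Second, the paper defines $g_\emptyset=\sum_{k=0}^{|t|}f_{t|k}$ so that $g_{\sigma'}=f_{\sigma_n}$ holds \emph{exactly}, whereas you set $g^k_\emptyset=f_{t_k}$ and then argue that the discrepancy $f_{\sigma_k}-g^k_{\tau_k}$ lies in $C_0(\X)$ and hence vanishes on $\X^*$. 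Both manoeuvres are fine; your version trades a slightly artificial definition of $g_\emptyset$ for an appeal to the fact that compactly supported functions restrict to zero on the remainder, which is already used elsewhere in the paper (e.g.\ Lemma~\ref{cantor-disjoint}).
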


\begin{proof}
    Clearly the forward implication holds. Now suppose that $\F$ satisfies the condition from the lemma
     and let $(f_s)_{s \in \{0,1\}^{<\omega}}$ 
     be a Cantor system of functions in $C_1(\X)$. 
     We will recursively build a sequence $(\sigma_n)_{n \in \N}$ of distinct 
     elements of $\{0,1\}^\N$ such that $(\beta f_{\sigma_n})|\X^* \in \F$ for every $n \in \N$. 
     First we find $\sigma_0$ by applying the hypothesis to the 
     Cantor system $(f_s)_{s \in \{0,1\}^{<\omega}}$. 
     Now suppose $n \geq 1$ and we have already found $(\sigma_k)_{k < n}$. 
     Let $t \in \{0,1\}^{<\omega}$ be such that it is not an initial segment of 
     any $\sigma_k$ for $k < n$. We define a new Cantor system $(g_s)_{s \in \{0,1\}^{< \omega}}$ by setting
    \begin{align*}
        g_\emptyset &= \sum_{k=0}^{|t|} f_{t|k}, \\
        g_s &=  f_{t^\frown s}, \hspace{1cm} s \in \{0,1\}^{< \omega} \setminus \{\emptyset\}.
    \end{align*}
    That $(g_s)_{s \in \{0,1\}^{< \omega}}$ is a Cantor system follows 
    from the fact that $(f_s)_{s \in \{0,1\}^{< \omega}}$ is. 
    We now apply the hypothesis to $(g_s)_{s \in \{0,1\}^{< \omega}}$ 
    to get $\sigma'$ such that $(\beta g_{\sigma'})|\X^* \in \F$. 
    Set $\sigma_n$ to be the sequence created by $t$ followed by $\sigma'$
     and note that $g_{\sigma'} = f_{\sigma_n}$, 
    and hence $(\beta f_{\sigma_n})|\X^* \in \F$. Moreover, $\sigma_n \neq \sigma_k$ 
    for any $k < n$ as $t$ is an initial segment of $\sigma_n$ but is not an initial 
    segment of any $\sigma_k$, $k < n$. This finishes the recursive construction and the proof of the lemma.
\end{proof}

\begin{lemma}\label{abundant-zero} Let $X$ be a locally compact, $\sigma$-compact noncompact Hausdorff topological space.
Suppose that $\F\subseteq C(\X^*)$ is abundant and that $F_1, \dots, F_j\subseteq \X^*$ for
some $j\in \N$ are closed pairwise disjoint sets such that $f|F_i$ is constant for
every $f\in \F$ and $i=1,\dots, j$.  Suppose that $(f_s)_{s \in \{0,1\}^{< \omega}}$ is a Cantor system
of functions in $C_1(X)$. Then there is $\sigma\in \{0,1\}^\N$ such that
$$(\beta f_\sigma)|(F_1\cup \dots \cup F_j)=0.$$
\end{lemma}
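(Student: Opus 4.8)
The plan is to use the abundance of $\F$ together with the fact that each $f \in \F$ is constant on each $F_i$, by iterating the construction inside the Cantor system so as to force the values along a carefully chosen branch to lie outside the (finitely many) constant values realized on $F_1 \cup \dots \cup F_j$. First I would fix, for each $f \in \F$ and each $i$, the constant value $c_{f,i}$ that $f$ takes on $F_i$; but since $\F$ may be large this is too much data to control directly, so instead I would argue locally. The key observation is: if $(\beta f_\sigma)|\X^*$ is \emph{not} identically $0$ on $F_1 \cup \dots \cup F_j$, then $(\beta f_\sigma)|\X^*$ is a function in $\F$ (by abundance it would be, if we could pick $\sigma$; but we need it \emph{for our} $\sigma$) which is nonzero somewhere on some $F_i$ where it is constant, hence $(\beta f_\sigma)|F_i$ is a nonzero constant. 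The strategy is to show that only countably many branches $\sigma$ can have this property, so that among the infinitely many $\sigma_k$ produced by Definition \ref{def-abundant} at least one avoids it.

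Here is the construction in more detail. Apply Definition \ref{def-abundant} directly to the Cantor system $(f_s)_{s \in \{0,1\}^{<\omega}}$ to obtain distinct $\sigma_k \in \{0,1\}^\N$, $k \in \N$, with $(\beta f_{\sigma_k})|\X^* \in \F$ for every $k$. By Lemma \ref{cantor-disjoint} the functions $(\beta f_{\sigma_k})|\X^*$ are pairwise disjoint in $\X^*$, so their cozero sets $\coz\big((\beta f_{\sigma_k})|\X^*\big)$ are pairwise disjoint. Now fix $i \in \{1,\dots,j\}$. For each $k$, since $(\beta f_{\sigma_k})|\X^* \in \F$, it is constant on $F_i$; if that constant were nonzero, then $F_i \subseteq \coz\big((\beta f_{\sigma_k})|\X^*\big)$. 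Since $F_i \neq \emptyset$ and the cozero sets are pairwise disjoint, this can happen for at most one value of $k$. Therefore for each $i$ there is at most one $k$ with $(\beta f_{\sigma_k})|F_i \neq 0$, and hence there are at most $j$ indices $k$ for which $(\beta f_{\sigma_k})|(F_1 \cup \dots \cup F_j) \neq 0$. Since there are infinitely many $k$, pick any $k$ outside this finite exceptional set and set $\sigma = \sigma_k$; then $(\beta f_\sigma)|(F_1 \cup \dots \cup F_j) = 0$, as required.

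The main point to get right is the implication ``$(\beta f_{\sigma_k})|F_i$ a nonzero constant $\Rightarrow F_i \subseteq \coz\big((\beta f_{\sigma_k})|\X^*\big)$'', which is immediate from the definition of cozero set once one notes $F_i \subseteq \X^*$ and the restriction is taken in $\X^*$; combined with disjointness of the cozero sets from Lemma \ref{cantor-disjoint}, the pigeonhole argument does the rest. I do not expect a genuine obstacle here — the one thing to be careful about is that the $F_i$ are required to be \emph{nonempty} for the cozero-set containment to force uniqueness; if some $F_i = \emptyset$ it contributes no constraint and can simply be discarded, so there is no loss in assuming all $F_i \neq \emptyset$. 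The hypotheses that $X$ is noncompact (so $\X^*$ is genuinely a \v Cech-Stone remainder with the Cantor-system machinery available) and that $\F$ is abundant are exactly what is needed; realcompactness and the almost $P$-space structure play no direct role in this particular lemma beyond what is packaged into Lemma \ref{cantor-disjoint} and Definition \ref{def-abundant}.
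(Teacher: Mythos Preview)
Your proof is correct and follows essentially the same route as the paper: use abundance to get infinitely many $\sigma_k$ with $(\beta f_{\sigma_k})|\X^*\in\F$, invoke Lemma~\ref{cantor-disjoint} for pairwise disjointness, and observe that for each $F_i$ at most one $k$ can give a nonzero constant, leaving infinitely many good $k$. Your first paragraph's exploratory remarks about ``iterating inside the Cantor system'' and ``forcing values outside finitely many constants'' describe a more complicated strategy than the one you actually carry out, but the argument in your second paragraph is exactly the paper's proof (with the harmless extra care about possibly empty $F_i$).
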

\begin{proof}
Let $\{\sigma_k: k\in \N\}$ be such that $f_{\sigma_k}\in\F$ for every $k\in \N$.
As $(\beta f_{\sigma_k})|\X^*$ are pairwise disjoint by Lemma \ref{cantor-disjoint}
and constant on each $F_i$ by the hypothesis, for each $1\leq i\leq j$ there is at most one $k\in \N$
such that $(\beta f_{\sigma_k})|F_i$ is nonzero. So choose $k\in \N$ such that
 $(\beta f_{\sigma_k})|F_i=0$ for every $1\leq i\leq j$.
\end{proof}
  
  \subsection{Submorphisms}

\begin{definition}
    Let $\mathcal{L}$ and $\mathcal{L}'$ be two lattices with the least elements $0_\mathcal{L}$ and $0_{\mathcal{L}'}$.
    We say that a map $\phi: \mathcal{L} \rightarrow \mathcal{L}'$
     is a submorphism if it is a lattice homomorphism and $\phi(0_{\mathcal{L}}) = 0_{\mathcal{L}'}$.
\end{definition}

By a sublattice we mean a suborder of a bigger lattice which is a lattice and where the joints and the meets are
the same as in the bigger lattice. i.e., the inclusion is a lattice homomorphism.
Lattices with the least elements we will be working with are sublattices of $\wp(A)$  for
 some infinite $A \subseteq \N$ that 
 contain all finite subsets of $A$, and sublattices of $C_1(X)$ where $X$ is a topological space. 
 While the lattices $\wp(A)$ are complete, the lattices $C_1(X)$ we will be working with
 are not complete as $X$ will not be extremally disconnected. 
 
 \begin{lemma}\label{sublattice} Suppose that  $K$ is compact and Hausdorff  and
 $\phi: \wp(\N)\rightarrow C_1(K)$ is a submorphism and that $\XX\subseteq C(K)$ is
 a closed subalgebra of $C(K)$. Then $\phi^{-1}[\XX]$ is a sublattice of $\wp(\N)$ containing $\emptyset$.
 \end{lemma}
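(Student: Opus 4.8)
The plan is to reduce the statement to the classical fact that every norm-closed unital subalgebra of $C(K)$ is automatically a sublattice of $C(K)$ under the pointwise operations $\vee$ and $\wedge$. First I would dispose of the trivial part: since $\phi$ is a submorphism it sends the least element $\emptyset$ of $\wp(\N)$ to the least element $0$ of $C_1(K)$, and $0\in\XX$ because $\XX$ is a (in particular linear) subalgebra of $C(K)$; hence $\emptyset\in\phi^{-1}[\XX]$.

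The heart of the argument is the claim that $\XX$ is closed under the lattice operations of $C(K)$. Given $h\in\XX$, the function $t\mapsto |t|$ restricted to the compact interval $[-\norm{h},\norm{h}]$ is a uniform limit of polynomials by the Weierstrass approximation theorem, so substituting $h$ for $t$ exhibits $|h|$ as a norm-limit of polynomials in $h$; these polynomials lie in $\XX$ since it is a unital algebra, and therefore $|h|\in\XX$ as $\XX$ is norm-closed. Consequently, for any $f,g\in\XX$,
\[ f\vee g=\tfrac12\bigl(f+g+|f-g|\bigr)\in\XX,\qquad f\wedge g=\tfrac12\bigl(f+g-|f-g|\bigr)\in\XX. \]

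Finally I would combine this with the homomorphism property of $\phi$. If $A,B\in\phi^{-1}[\XX]$, i.e.\ $\phi(A),\phi(B)\in\XX$, then since $\phi$ is a lattice homomorphism $\phi(A\cup B)=\phi(A)\vee\phi(B)$ and $\phi(A\cap B)=\phi(A)\wedge\phi(B)$, and both belong to $\XX$ by the previous paragraph; hence $A\cup B,\,A\cap B\in\phi^{-1}[\XX]$. As the joins and meets in $\wp(\N)$ are precisely unions and intersections, this shows $\phi^{-1}[\XX]$ is a sublattice of $\wp(\N)$, and it contains $\emptyset$ by the first step. The only non-routine ingredient is the passage from ``norm-closed subalgebra'' to ``sublattice'', handled by the Weierstrass argument above; the rest is a direct unwinding of the definitions of submorphism and of sublattice.
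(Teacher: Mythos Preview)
Your proof is correct and follows essentially the same three-step structure as the paper: show $\emptyset\in\phi^{-1}[\XX]$, show that the closed subalgebra $\XX$ is a sublattice of $C(K)$, then pull back along the lattice homomorphism $\phi$. The only difference is in the middle step: the paper invokes Stone's representation of a closed subalgebra as $\{f\circ\psi:f\in C(L)\}$ for some continuous surjection $\psi:K\to L$ (Proposition~\ref{stone}(2)), from which closure under pointwise $\vee,\wedge$ is immediate, whereas you give the direct Weierstrass argument that $|h|\in\XX$; both are standard and equally valid.
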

 \begin{proof} Clearly $0\in \XX$ and so $\emptyset \in\phi^{-1}[\XX]$. 
Using   Proposition \ref{stone} (2) and the fact that suprema and infima in 
spaces of continuous functions are defined as pointwise maxima and minima
 we conclude that $\XX$ is a sublattice of $C(K)$. 
 Since $\phi$ is a lattice homomorphism it follows that $\phi^{-1}[\XX]$ is a sublattice of $\wp(\N)$.
 \end{proof}
 
 Recall that $a,b \in \mathcal{L}$ are disjoint if $a \wedge b = 0_{\mathcal{L}}$, this agrees 
 with the definition of disjointness of functions in $C_1(X)$ given before -- for $f_1,f_2 \in C_1(X)$
  we have that $\coz(f_1) \cap \coz(f_2) = \emptyset$ if and only if $f_1 \wedge f_2 = 0$.
   Further, if $f_1,\dots,f_k \in C_1(X)$ are disjoint, then $\bigvee_{i=1}^k f_i = \sum_{i=1}^k f_i$.
   We have the following.
   
   \begin{lemma}\label{disjoint-preserv} Let $\mathcal{L}$ and $\mathcal{L}'$ be two 
   lattices with the least elements $0_\mathcal{L}$ and $0_{\mathcal{L}'}$ respectively.
    Any submorphism $\phi: \mathcal{L} \rightarrow \mathcal{L}'$ preserves disjointness.
   \end{lemma}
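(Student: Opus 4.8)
The plan is to unwind the definitions: disjointness of $a, b \in \mathcal{L}$ means $a \wedge b = 0_{\mathcal{L}}$, and I want to conclude $\phi(a) \wedge \phi(b) = 0_{\mathcal{L}'}$. The only two facts available about $\phi$ are exactly the two clauses in the definition of a submorphism, namely that $\phi$ is a lattice homomorphism (so in particular it commutes with the meet operation $\wedge$) and that it sends the least element to the least element.

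So, given disjoint $a, b \in \mathcal{L}$, I would compute
\[
\phi(a) \wedge \phi(b) = \phi(a \wedge b) = \phi(0_{\mathcal{L}}) = 0_{\mathcal{L}'},
\]
where the first equality uses that $\phi$ preserves meets, the second uses $a \wedge b = 0_{\mathcal{L}}$, and the third uses that $\phi$ is a submorphism. This shows $\phi(a)$ and $\phi(b)$ are disjoint in $\mathcal{L}'$, which is exactly the claim. There is no real obstacle here: the statement is an immediate consequence of the definition of a submorphism, recorded for convenient reference in later arguments (e.g.\ where submorphisms $\wp(\N) \to C_1(X)$ are used).
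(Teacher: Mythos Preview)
Your proof is correct and is precisely the intended argument; the paper states this lemma without proof, treating it as an immediate consequence of the definition of a submorphism, which is exactly what your one-line computation $\phi(a)\wedge\phi(b)=\phi(a\wedge b)=\phi(0_{\mathcal L})=0_{\mathcal L'}$ verifies.
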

   
   \begin{lemma}\label{morphism-nice}
Suppose that $X$ is a topological Hausdorff space and and $\phi: \wp(\N) \rightarrow C_1(X)$ is a submorphism
such and $f_n=\phi(\{n\})$ for $n\in \N$.
Then whenever  $A\subseteq \N$ and $x\in X$ is such that $f_n(x)\not=0$ for $n\in A$, then 
$$\phi(A)(x)=f_n(x).$$
\end{lemma}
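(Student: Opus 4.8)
The statement to prove is: if $\phi:\wp(\N)\to C_1(X)$ is a submorphism, $f_n=\phi(\{n\})$, and $x\in X$ is such that $f_n(x)\neq 0$ for a given $n\in A$, then $\phi(A)(x)=f_n(x)$. The first observation is that $\{n\}\subseteq A$, so by monotonicity of a lattice homomorphism $\phi(\{n\})\le\phi(A)$ pointwise, hence $f_n(x)\le\phi(A)(x)$. In particular $\phi(A)(x)\ge f_n(x)>0$ if $f_n(x)>0$; since all values lie in $[0,1]$ we still need to pin down the exact value rather than just an inequality, so the reverse estimate is the real content.

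For the reverse inequality I would exploit that $A$ decomposes as the disjoint union of $\{n\}$ and $A\setminus\{n\}$, so in the lattice $\wp(\N)$ we have $A=\{n\}\vee(A\setminus\{n\})$ with $\{n\}\wedge(A\setminus\{n\})=\emptyset$. Applying $\phi$ and using that it is a lattice homomorphism preserving the least element, $\phi(A)=\phi(\{n\})\vee\phi(A\setminus\{n\})=f_n\vee g$, where $g=\phi(A\setminus\{n\})$, and by Lemma~\ref{disjoint-preserv} the functions $f_n$ and $g$ are disjoint, i.e. $\coz f_n\cap\coz g=\emptyset$. Evaluating at $x$: since $f_n(x)\neq 0$, we have $x\in\coz f_n$, so $x\notin\coz g$, i.e. $g(x)=0$. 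Therefore $\phi(A)(x)=(f_n\vee g)(x)=\max(f_n(x),g(x))=\max(f_n(x),0)=f_n(x)$, using that $f_n(x)\ge 0$ because $f_n\in C_1(X)$ takes values in $[0,1]$. This is precisely the claimed identity.

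The main — in fact only — subtlety is making sure the two invoked facts are legitimately available: that a submorphism is monotone (immediate, since $a\le b$ means $a\wedge b=a$, and a lattice homomorphism preserves $\wedge$), and that it preserves disjointness (this is exactly Lemma~\ref{disjoint-preserv}, which applies because $\wp(\N)$ and $C_1(X)$ are lattices with least elements $\emptyset$ and $0$). There is no hard step here; the proof is a two-line computation once one writes $A=\{n\}\cup(A\setminus\{n\})$ as a disjoint union and recalls that for disjoint non-negative functions the join is the pointwise maximum, which is $f_n(x)$ at any point outside $\coz g$. One minor point to state cleanly is that $f_n(x)\ge 0$, so that $\max(f_n(x),0)=f_n(x)$; this uses $f_n\in C_1(X)$.
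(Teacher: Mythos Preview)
Your proof is correct and follows essentially the same route as the paper: decompose $A=\{n\}\cup(A\setminus\{n\})$, use that $\phi$ preserves joins and disjointness to write $\phi(A)=f_n\vee g$ with $g=\phi(A\setminus\{n\})$ disjoint from $f_n$, and conclude $g(x)=0$ so $\phi(A)(x)=f_n(x)$. The monotonicity observation in your first paragraph is unnecessary (the join computation already gives the exact value), but it does no harm.
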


\begin{proof}
We have
$$\phi(A\setminus\{n\})\wedge\phi(\{n\})=\phi((A\setminus\{n\})\wedge\{n\})=\phi(\emptyset)=0.$$

This means that $\phi(A\setminus\{n\})$ and 
$f_n$ are disjoint, so $\phi(A\setminus\{n\})(x)=0$.
Since
$$\phi(A)=\phi((A\setminus\{n\})\vee(\{n\}))=\phi((A\setminus\{n\})\vee\phi(\{n\})$$
we conclude that $\phi(A)(x)=\phi(\{n\})(x)$ as $\phi(\{n\})$ is non-negative.
\end{proof}

\begin{lemma} \label{lemma-ADsubmorphism}
Suppose that $X$ is a topological Hausdorff space, $(f_n)_{n\in \N}\subseteq C_1(X)$
 and $\phi: \wp(\N)\rightarrow C_1(X)$ is a submorphism such that $\phi(\{n\})=f_n$
for every $n\in \N$. 
If $A, B \in \wp (\N)$ are such that $A\cap B$ is finite then 
$$\phi(A) \wedge \phi(B)=\sum_{n\in A\cap B}f_n.$$
In particular,
$$\phi(A)|Y=\phi(B)|Y=\Bigg(\sum_{n\in A\cap B}f_n\Bigg)|Y,$$
where $Y=\coz(\phi(A))\cap \coz(\phi(B))$.
\end{lemma}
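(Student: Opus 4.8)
The plan is to reduce everything to an identity about the submorphism $\phi$ evaluated at a finite set, using Lemma~\ref{disjoint-preserv} to handle the ``disjoint part'' and Lemma~\ref{morphism-nice} to pin down values on the overlap. Write $C=A\cap B$, which is finite by hypothesis. The first step is to observe that $A$ decomposes as a disjoint union $A=(A\setminus C)\sqcup C$, hence $\phi(A)=\phi(A\setminus C)\vee\phi(C)=\phi(A\setminus C)\vee\sum_{n\in C}f_n$, using that $\phi$ is a lattice homomorphism preserving $0$ together with the remark preceding the lemma that finitely many pairwise disjoint functions in $C_1(X)$ have join equal to their sum (the $f_n$ for $n\in C$ are pairwise disjoint because they are the $\phi$-images of the pairwise disjoint singletons). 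Similarly $\phi(B)=\phi(B\setminus C)\vee\sum_{n\in C}f_n$.

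Next I would exploit disjointness. The sets $A\setminus C$ and $B\setminus C$ are disjoint subsets of $\N$, and they are each disjoint from $C$; hence by Lemma~\ref{disjoint-preserv} the functions $\phi(A\setminus C)$, $\phi(B\setminus C)$ and $\phi(C)=\sum_{n\in C}f_n$ are pairwise disjoint in $C_1(X)$. Now compute
\begin{align*}
\phi(A)\wedge\phi(B)
&=\Bigl(\phi(A\setminus C)\vee\textstyle\sum_{n\in C}f_n\Bigr)\wedge\Bigl(\phi(B\setminus C)\vee\textstyle\sum_{n\in C}f_n\Bigr).
\end{align*}
Since the two ``new'' summands $\phi(A\setminus C)$ and $\phi(B\setminus C)$ are disjoint from each other and from $\sum_{n\in C}f_n$, the pointwise computation of $\vee$ and $\wedge$ (valid because these operations in $C_1(X)$ are pointwise $\max$ and $\min$) gives that at each $x\in X$ the minimum of the two displayed quantities equals $\bigl(\sum_{n\in C}f_n\bigr)(x)$: indeed if $x$ lies in the cozero set of $\phi(A\setminus C)$ then it lies outside the cozero sets of $\phi(B\setminus C)$ and of $\sum_{n\in C}f_n$, so the second factor is $0$ there and the minimum is $0=\bigl(\sum_{n\in C}f_n\bigr)(x)$; symmetrically for $\phi(B\setminus C)$; and off both those cozero sets both factors simply equal $\sum_{n\in C}f_n$. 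This yields $\phi(A)\wedge\phi(B)=\sum_{n\in C}f_n$, the first assertion.

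For the ``in particular'' clause, note that on $Y=\coz(\phi(A))\cap\coz(\phi(B))$ we have $\phi(A)\wedge\phi(B)$ agreeing pointwise with $\min(\phi(A),\phi(B))$, and both $\phi(A)$ and $\phi(B)$ are $\geq$ this minimum; but since by the first part the minimum is $\sum_{n\in C}f_n$, and on $Y$ neither $\phi(A)$ nor $\phi(B)$ vanishes, I would invoke Lemma~\ref{morphism-nice} to identify the actual values: for $x\in Y$, $x\in\coz(\phi(A))$ forces (by the decomposition above and disjointness) $x\in\coz\bigl(\sum_{n\in C}f_n\bigr)$, i.e. $f_n(x)\neq 0$ for exactly one $n\in C$, whence $\phi(A)(x)=f_n(x)=\bigl(\sum_{m\in C}f_m\bigr)(x)$ by Lemma~\ref{morphism-nice}, and likewise $\phi(B)(x)=\bigl(\sum_{m\in C}f_m\bigr)(x)$. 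So $\phi(A)|Y=\phi(B)|Y=\bigl(\sum_{n\in C}f_n\bigr)|Y$, as claimed. The only mildly delicate point — and the one I would be most careful about — is the bookkeeping that on $\coz(\phi(A))\cap\coz(\phi(B))$ the overlap term $\sum_{n\in C}f_n$ is actually nonzero and single-termed, so that Lemma~\ref{morphism-nice} applies cleanly; this is exactly where the finiteness of $A\cap B$ and the pairwise disjointness of the $f_n$ are used.
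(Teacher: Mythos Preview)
Your proof is correct, but for the first identity you take a noticeably longer route than the paper. The paper observes in one line that since $\phi$ is a lattice homomorphism and $A\wedge B=A\cap B$ in $\wp(\N)$, one has directly
\[
\phi(A)\wedge\phi(B)=\phi(A\cap B)=\phi\Bigl(\bigcup_{n\in A\cap B}\{n\}\Bigr)=\bigvee_{n\in A\cap B}f_n=\sum_{n\in A\cap B}f_n,
\]
the last equality by disjointness of the $f_n$. You instead decompose $A=(A\setminus C)\sqcup C$, $B=(B\setminus C)\sqcup C$, invoke Lemma~\ref{disjoint-preserv} to get pairwise disjointness of $\phi(A\setminus C)$, $\phi(B\setminus C)$, $\phi(C)$, and then do a pointwise case analysis on the cozero sets. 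This works, but it rederives by hand what the homomorphism property gives for free.

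For the second part your argument and the paper's are essentially the same: both deduce from the first part that $\sum_{n\in C}f_n$ is nonzero on $Y$, pick the unique $n_y\in C$ with $f_{n_y}(y)\neq 0$, and apply Lemma~\ref{morphism-nice}. Your phrasing ``$x\in\coz(\phi(A))$ forces \dots\ $x\in\coz(\sum_{n\in C}f_n)$'' is slightly misleading, since membership in $\coz(\phi(A))$ alone does not suffice; what you actually use (and what the paper states more cleanly) is that on $Y$ the pointwise minimum $\phi(A)\wedge\phi(B)$ is strictly positive, hence $\sum_{n\in C}f_n$ is too.
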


\begin{proof} As $A\cap B$ is the finite disjoint union of singletons of $A\cap B$, we have
\begin{align*}
    \phi(A) \wedge \phi(B) &= \phi(A\cap B) = 
    \phi \left( \bigcup_{n\in A\cap B} \{n\} \right) = \bigvee_{n \in A \cap B} \phi(\{n\}) \\
    &= \bigvee_{n \in A \cap B} f_n = \sum_{n\in A\cap B}f_n.
\end{align*}
The last equality above follows  the fact that $f_n$'s are pairwise disjoint  in $C_1(X)$
by Lemma \ref{disjoint-preserv} as $\phi(\{n\})=f_n$ for $n\in \N$.

If $y\in Y$, then $(\phi(A) \wedge \phi(B))(y)>0$, since the infima in $C_1(X)$ are pointwise,
so by the first part of the lemma
$$0<\Bigg(\sum_{n\in A\cap B}f_n\Bigg)(y)=f_{n_y}(y)\leqno (*)$$
 for some unique $n_y\in A\cap B$ (as
$f_n$'s are disjoint). Hence Lemma \ref{morphism-nice}
implies that 
$$\phi(A)(y)=f_{n_y}(y)$$
which gives the second part of the lemma for $A$ by ($*$). The argument for $B$ is the same. 
\end{proof}

\begin{lemma}\label{morphism-existence}
    Let $X$ be a locally compact, $\sigma$-compact non-compact Hausdorff space.
    Let $(f_n)_{n\in\N} \subseteq C_1(X^*)$ be a pairwise disjoint sequence. 
    Then there exists a submorphism $\phi:\wp (\N) \rightarrow C_1(X^*)$ such that $\phi(\{n\}) = f_n$ for every $n \in \N$.
\end{lemma}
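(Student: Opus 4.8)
The plan is to exploit that $X^*$ is a compact $F$-space (Lemma \ref{X*-Fspace}), so that every cozero subset of $X^*$ is $C^*$-embedded in $X^*$, together with the fact that $X^*$ is an almost $P$-space (Lemma \ref{remainder-P-space}); the submorphism is then built by transfinite recursion over $\wp(\N)$, the role of Dedekind completeness (available when the underlying space is extremally disconnected) being played here by the $F$-space property, exactly as in the analogous constructions over $C(\N^*)$ in the literature.

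First I would pin down the only possible behaviour of $\phi$ on a dense set. Put $U:=\bigcup_{n\in\N}\coz f_n$, a cozero subset of $X^*$ since $\coz(X^*)$ is closed under countable unions; as the $\coz f_n$ are pairwise disjoint they are relatively clopen in $U$, so for each $A\subseteq\N$ the function $s_A\colon U\to[0,1]$ equal to $f_n$ on $\coz f_n$ for $n\in A$ and to $0$ on $\coz f_n$ for $n\notin A$ is continuous on $U$. If $\phi\colon\wp(\N)\to C_1(X^*)$ is any submorphism with $\phi(\{n\})=f_n$, then for every $A$ and every $x$, from $\phi(A)\wedge\phi(\N\setminus A)=\phi(\emptyset)=0$ and $\phi(A)\vee\phi(\N\setminus A)=\phi(\N)$ we get $\{\phi(A)(x),\phi(\N\setminus A)(x)\}=\{0,\phi(\N)(x)\}$; evaluating at $x\in\coz f_n$, where $\phi(\{n\})=f_n>0$ forces $\phi(\N)(x)=f_n(x)$, we see $\phi(A)|U=s_A$, which conversely recovers $\phi(\{n\})=f_n$ as $f_n$ vanishes off $\coz f_n\subseteq U$. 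By the $C^*$-embedding of $U$ in the $F$-space $X^*$, each $s_A$ does extend to some element of $C_1(X^*)$; the content of the lemma is thus to choose such extensions coherently in $A$, so that $A\mapsto\phi(A)$ is a lattice homomorphism with $\phi(\emptyset)=0$.

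To do this I would enumerate $\wp(\N)=\{A_\xi:\xi<\cc\}$ (finite sets first, $A_0=\emptyset$) and recursively define $\phi$ along an increasing chain $(\mathcal L_\xi)$ of countable sublattices of $\wp(\N)$ containing $\emptyset$, with union $\wp(\N)$, maintaining that $\phi$ is a submorphism with $\phi(A)|U=s_A$. Limit stages are unions. At a successor stage one must, using distributivity of $\wp(\N)$ and of $C_1(X^*)$, extend $\phi$ from $\mathcal L_\xi$ to $\mathcal L_\xi\langle A_\xi\rangle=\{(A_\xi\cap D)\cup C: C,D\in\mathcal L_\xi\}$ by choosing $t\in C_1(X^*)$ with $t|U=s_{A_\xi}$, with $\phi(C)\le t$ whenever $C\in\mathcal L_\xi$ and $C\subseteq A_\xi$, and with $t\le\phi(D)$ whenever $D\in\mathcal L_\xi$ and $A_\xi\subseteq D$, and then set $\phi((A_\xi\cap D)\cup C):=(t\wedge\phi(D))\vee\phi(C)$; monotonicity of $\phi$ on $\mathcal L_\xi$ places every eligible $\phi(C)$ below every eligible $\phi(D)$. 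The hard part is the existence of such a $t$: since $C_1(X^*)$ is not Dedekind $\sigma$-complete one cannot simply take a supremum of the $\phi(C)$'s and an infimum of the $\phi(D)$'s. This is where the $F$-space structure of $X^*$ is used — the disjoint cozero sets $\bigcup_{n\in A_\xi}\coz f_n$ and $\bigcup_{n\notin A_\xi}\coz f_n$ are completely separated — and, combined with the running invariants $\phi(C)|U\le s_{A_\xi}\le\phi(D)|U$, it lets one take for $t$ a finite lattice combination of a single $C^*$-extension of $s_{A_\xi}$ with finitely many of the already-chosen functions $\phi(C),\phi(D)$. The technical heart of the argument, which I expect to be the main obstacle, is verifying that the a priori infinitely many sandwiching conditions at each step collapse — thanks to the $F$-space and almost $P$-space properties of $X^*$ — to the finitely many relations already encoded in $\mathcal L_\xi$ together with the single requirement $t|U=s_{A_\xi}$.
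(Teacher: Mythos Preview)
Your approach is genuinely different from the paper's and, as written, has a real gap.

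The paper's proof is completely elementary and avoids any transfinite recursion or interpolation argument. It works on $X$ rather than on $X^*$: using the $F$-space property of $X^*$ and the normality of $\beta X$ together with the $\sigma$-compactness of $X$, each $f_n$ is extended to $\tilde f_n\in C_1(\beta X)$ so that the family $(\tilde f_n)_{n\in\N}$ is still pairwise disjoint and, crucially, has locally finite supports in $X$. Then for every $A\subseteq\N$ the sum $g_A=\sum_{n\in A}\tilde f_n|X$ is a well-defined element of $C_1(X)$, and one simply sets $\phi(A)=(\beta g_A)|X^*$. The lattice homomorphism conditions are verified pointwise on $X$ (where at each point at most one summand is nonzero) and pushed through $\beta$ and the restriction map. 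No choices, no recursion, no interpolation.

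Your recursion has two problems. First, the chain $(\mathcal L_\xi)_{\xi<\cc}$ cannot consist of \emph{countable} sublattices: once $\xi\ge\omega_1$ the lattice $\mathcal L_\xi$ generated by $\{A_\eta:\eta<\xi\}$ has size $|\xi|$, and your successor step then faces uncountably many sandwiching constraints $\phi(C)\le t\le\phi(D)$. The $F$-space property of $X^*$ gives you at best countable interpolation in $C(X^*)$ (Seever's theorem), not uncountable interpolation, so there is no reason such $t$ should exist; the almost $P$-space property does not help here either. Second, even at countable stages the constraints do not ``collapse to finitely many'' as you suggest: what one actually needs is precisely Seever's countable interpolation theorem for $C(K)$ with $K$ a compact $F$-space, which you do not invoke. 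So the hardest step is left both unproved and misdiagnosed, and beyond $\omega_1$ the available tool no longer applies.
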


\begin{proof}
    First we extend each $f_n$ to $\tilde{f}_n \in C_1(\beta X)$ such that
    \begin{enumerate}[(a)]
        \item $(\tilde{f}_n)_{n\in\N}$ is pairwise disjoint;
        \item Every compact subset of $X$ intersects at most finitely many supports of $\tilde{f}_n$'s.
    \end{enumerate}
    To see that this could be done let, for $n \in \N$, $F_n = \coz 
    f_n$ and $G_n = \bigcup_{m \neq n} \coz f_m$. 
    These are disjoint cozero sets, and so disjoint open $F_\sigma$ 
    sets in the F-space $X^*$ (see Lemma \ref{X*-Fspace}), and thus 
    $\overline{F_n} \cap \overline{G_n} = \emptyset$. As $X$ is locally 
    compact, $X^*$ is closed in $\beta X$, and these closures are disjoint in $\beta X$ as
    well. We can use its normality to find open disjoint sets $U_n, V_n\subseteq\beta X$ 
    such that $\overline{F_n} \subseteq U_n$ and $\overline{G_n} \subseteq V_n$. 
      
    Let $(X_n)_{n \in \N}$ be an increasing collection of compact subsets of $X$ 
    such that $X = \bigcup_{n \in \N} X_n$. Using the local compactness of $X$ we may assume that
    $X_n=\overline{W_n}$, where $W_n$ are open and increasing and cover $X$. This guarantees that
    every compact subset of $X$ is included in one of the $X_n$'s.
    
    By the Tietze extension theorem for every $n\in\N$ we find $\tilde{f}_n\in  C_1(\beta X)$ 
    such that $\tilde{f}_n | X^* = f_n$ and 
    $\coz \tilde{f}_n \subseteq U_n \cap \bigcap_{m<n} 
    V_m \cap (\beta X \setminus X_n)$. Then (a) holds as 
    $\coz \tilde{f}_n \cap \coz \tilde{f}_m \subseteq U_n \cap V_n = \emptyset$
    for $n < m$, and (b) holds as $\coz \tilde{f}_n \cap X_m$
    is nonempty only if $n < m$.

    For $A \in \wp (\N)$ define $g_A = \sum_{n \in A} \tilde{f_n}| X$. 
    Then $g_A$ is a well-defined continuous function on $X$ 
    as the sum in its definition is locally finite by (b) and the fact that $X$ is locally compact.
    Further, $g_A \in C_1(X)$ by (a). Set $\phi(A) = (\beta g_A)|X^*$. Then for $n \in \N$
    \begin{align*}
        \phi(\{n\}) = (\beta g_{\{n\}})|X^* = (\beta (\tilde{f}_n | X)) | X^* = \tilde{f}_n | X^* = f_n.
    \end{align*}
    
    It remains to prove that $\phi$ is a submorphism.
    It preserves the least element as $g_\emptyset=0$.
    As $\phi$ is the composition of the maps 
    $g_{({\cdot})}: A \mapsto g_A$, 
    $\beta: C_b(X) \rightarrow C(\beta X)$
    and the restriction map from $\beta X$ to $X^*$, 
    we are done if we show that each of these maps preserve suprema and infima. 
     
   Note that for $x\in X$ we have 
    $$\beta(f\wedge g)(x)=(f\wedge g)(x)=
    \min(f(x), g(x))= (\beta(f)\wedge \beta(g))(x)$$
    and similarly for $\vee$, so  the density of $X$ in $\beta X$ 
    implies that the extension operator $\beta$  preserves suprema and infima.
   The restriction map preserves suprema and infima because they are defined pointwise
   as maxima and minima of the values.
     So  we need to show is that $g_{({\cdot})}$ does as well. 
    But this follows from (a): for any $x \in X$ there is $n_x \in \N$
    such that $\tilde{f}_n(x) = 0$ for every $n \in \N \setminus \{n_x\}$
    (it could, of course, happen that $\tilde{f}_{n_x}(x) = 0$ as well). 
    Thus for any $C \in \wp (\N)$ and $x \in X$ we have $g_C(x) = \chi_C(n_x) \tilde{f}_{n_x}(x)$. 
    Hence, for any $A,B \in \wp (\N)$ and any $x \in X$, using the observation above and the
    non-negativity of $\tilde{f}_n$'s for we get

    \begin{align*}
        g_{A \cap B} (x) &= \chi_{A \cap B}(n_x) \tilde{f}_{n_x}(x) = (\chi_A(n_x) \wedge \chi_B(n_x)) \tilde{f}_{n_x}(x) \\
        &= (\chi_A(n_x) \tilde{f}_{n_x}(x)) \wedge (\chi_B(n_x) \tilde{f}_{n_x}(x)) = g_A(x) \wedge g_B(x), \\
        g_{A \cup B} (x) &= \chi_{A \cup B}(n_x) \tilde{f}_{n_x}(x) = (\chi_A(n_x) \vee \chi_B(n_x)) \tilde{f}_{n_x}(x) \\
        &= (\chi_A(n_x) \tilde{f}_{n_x}(x)) \vee (\chi_B(n_x) \tilde{f}_{n_x}(x)) = g_A(x) \vee g_B(x).
    \end{align*}
    Since the suprema and the infima of elements of pairs of elements of $C_b(X)$ are defined as pointwise
    suprema and infima this completes the proof.
\end{proof}

\begin{lemma}\label{morphism-monotone}
Suppose that $X$ is a topological Hausdorff space, $\mathcal P$ is sublattice
of $\wp(\N)$ which contains all finite sets and $\phi:\mathcal P\rightarrow C_1(X)$ is a submorphism.
Then $\phi$ is monotone, i.e., if $A\subseteq A' \in \mathcal{P}$ then $\phi(A)\leq \phi(A')$.
In particular, if $\N \in \mathcal{P}$ and $\phi(\N)|V=0$ for some $V\subseteq X$,
 then $\phi(A)|V=0$ for every $A \in \mathcal{P}$.
\end{lemma}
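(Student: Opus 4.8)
The plan is to exploit the elementary fact that every lattice homomorphism is order-preserving, and then to deduce the ``in particular'' clause from the non-negativity of the elements of $C_1(X)$.

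First I would fix $A\subseteq A'$ with $A,A'\in\mathcal P$ (note $A\in\mathcal P$ is needed for $\phi(A)$ to make sense). Since $\mathcal P$ is a \emph{sublattice} of $\wp(\N)$, the meet of $A$ and $A'$ computed in $\mathcal P$ is just their intersection, and $A\subseteq A'$ gives $A\wedge A'=A\cap A'=A$. Applying $\phi$, which is in particular a lattice homomorphism, yields
$$\phi(A)=\phi(A\wedge A')=\phi(A)\wedge\phi(A').$$
Because the meet in $C_1(X)$ is the pointwise minimum, for every $x\in X$ we get $\phi(A)(x)=\min(\phi(A)(x),\phi(A')(x))\leq\phi(A')(x)$, that is $\phi(A)\leq\phi(A')$ pointwise, which is the asserted monotonicity.

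For the last assertion, I would assume $\N\in\mathcal P$ and $\phi(\N)|V=0$ for some $V\subseteq X$. Any $A\in\mathcal P$ satisfies $A\subseteq\N$, so by the monotonicity just established $\phi(A)\leq\phi(\N)$. Since $\phi(A)$ takes values in $[0,1]$ and is therefore non-negative, we get $0\leq\phi(A)(x)\leq\phi(\N)(x)=0$ for every $x\in V$, hence $\phi(A)|V=0$.

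I do not expect any genuine obstacle: the lemma is essentially the statement that a submorphism is monotone. The only points requiring care — both covered by the standing conventions of Section 2 — are that $\mathcal P$ being a sublattice of $\wp(\N)$ makes $A\wedge A'$ coincide with $A\cap A'$, and that joins and meets in $C_1(X)$ are the pointwise maximum and minimum.
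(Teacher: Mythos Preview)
Your proof is correct and follows the same argument as the paper: both use $A\wedge A'=A$ and apply the lattice homomorphism property to get $\phi(A)=\phi(A)\wedge\phi(A')\leq\phi(A')$, with the ``in particular'' clause following immediately from monotonicity and non-negativity.
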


\begin{proof}
    If $A\subseteq A' \in \mathcal{P}$ then $A \wedge A' = A$, 
    and thus $\phi(A) = \phi(A) \wedge \phi(A') \leq \phi(A')$.
\end{proof}

\begin{lemma}\label{morphism-disjoint}
    Let $X$ be locally compact, $\sigma$-compact non-compact Hausdorff space. 
    Let $(f_n)_{n\in\N} \subseteq C_1(X^*)$ be  pairwise disjoint and $(V_n)_{n \in \N} \subseteq \coz(X^*)$ 
    be such that $f_n|V_m=0$ for every $n, m\in \N$. 
    Then there exists a submorphism $\phi:\wp (\N) \rightarrow C_1(X^*)$ such that $\phi(\{n\}) = f_n$ for every $n \in \N$
    and $\phi(\N)|V_m=0$ for every $m\in \N$.
\end{lemma}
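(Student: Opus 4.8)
The plan is to rerun the construction in the proof of Lemma \ref{morphism-existence}, adding a single extra constraint on the extensions that will force $\phi(\N)$ to vanish on each $V_m$. Put $F = \bigcup_{n \in \N} \coz f_n$ and $W = \bigcup_{m \in \N} V_m$. Both are open $F_\sigma$ subsets of $X^*$ (each $\coz f_n$ and each $V_m$ is a cozero set, and $\coz(X^*)$ is closed under countable unions), and they are disjoint: the hypothesis $f_n|V_m = 0$ says precisely that $\coz f_n \cap V_m = \emptyset$ for all $n,m$. Since $X^*$ is a compact $F$-space by Lemma \ref{X*-Fspace}, we get $\overline{F} \cap \overline{W} = \emptyset$; and since $X$ is locally compact, $X^*$ is closed in $\beta X$, so these closures are the same whether taken in $X^*$ or in $\beta X$. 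By normality of $\beta X$ I would fix an open set $U \subseteq \beta X$ with $\overline{F} \subseteq U$ and $\overline{U} \cap \overline{W} = \emptyset$.

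Next I would follow the proof of Lemma \ref{morphism-existence} line by line, with the sets there called $U_n, V_n$ renamed $P_n, Q_n$ to avoid a clash with the present $V_n$'s, and with one change: when choosing the extensions $\tilde f_n \in C_1(\beta X)$ of $f_n$, I additionally require
$$\coz \tilde f_n \subseteq P_n \cap \bigcap_{m < n} Q_m \cap (\beta X \setminus X_n) \cap U,$$
where $X = \bigcup_n X_n$ is the compact exhaustion from that proof. Such $\tilde f_n$ still exist for the same reason as there: $\supp f_n$ is a closed subset of $X^*$ lying inside $P_n$, inside $Q_m$ for each $m < n$, inside $\beta X \setminus X_n$, and --- since $\supp f_n \subseteq \overline{F} \subseteq U$ --- also inside $U$; so one extends $f_n$ by Tietze and multiplies by a Urysohn function equal to $1$ on $\supp f_n$ and to $0$ off that open intersection. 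Properties (a) and (b) of that proof are unaffected, so, setting $g_A = \sum_{n \in A} \tilde f_n|X$ and $\phi(A) = (\beta g_A)|X^*$, the resulting map $\phi \colon \wp(\N) \to C_1(X^*)$ is a well-defined submorphism with $\phi(\{n\}) = f_n$, exactly as in Lemma \ref{morphism-existence}.

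Finally I would check $\phi(\N)|V_m = 0$ for every $m$. Since $\coz g_\N \subseteq \bigcup_n (\coz \tilde f_n \cap X) \subseteq U$, the function $g_\N$ vanishes on $X \setminus U$, hence on $(\beta X \setminus \overline{U}) \cap X$, which is dense in the open set $\beta X \setminus \overline{U}$; by continuity $\beta g_\N$ vanishes on $\beta X \setminus \overline{U}$, and this set contains $\overline{W}$ and therefore $V_m$, so $\phi(\N)|V_m = (\beta g_\N)|V_m = 0$. (One then also gets $\phi(A)|V_m = 0$ for all $A$ via Lemma \ref{morphism-monotone}, though this is not requested.) I do not expect a genuine obstacle here: the statement is a mild strengthening of Lemma \ref{morphism-existence}, and the only point needing care is that the new requirement $\coz \tilde f_n \subseteq U$ is compatible with the constraints already present in that proof --- which holds because $\supp f_n \subseteq \overline{F} \subseteq U$.
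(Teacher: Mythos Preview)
Your argument is correct, but the paper takes a shorter and slicker route. Instead of reopening the construction of Lemma \ref{morphism-existence}, the paper simply \emph{applies} that lemma once more to a slightly longer sequence: it lets $V=\bigcup_m V_m\in\coz(X^*)$, picks $f\in C_1(X^*)$ with $\coz f=V$, notes that $(f,f_0,f_1,\dots)$ is pairwise disjoint (since each $f_n$ vanishes on every $V_m$), and applies Lemma \ref{morphism-existence} to this augmented sequence. The resulting submorphism $\psi$ has $\psi(\{0\})=f$ disjoint from $\psi(\{n:n>0\})$ by Lemma \ref{disjoint-preserv}, which forces $\psi(\{n:n>0\})$ to vanish on $\coz f=V\supseteq V_m$; relabeling $\psi|\wp(\{n:n>0\})$ gives the desired $\phi$. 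So the paper treats Lemma \ref{morphism-existence} as a black box and encodes the vanishing condition via one extra ``dummy'' function, whereas you encode it by the auxiliary open set $U$ and re-derive the whole construction. Your approach has the minor advantage of making the mechanism explicit (and would generalize if one wanted several independent side-conditions), while the paper's trick is shorter and avoids reproving anything.
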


\begin{proof}
    Let $V = \bigcup_{m \in \N} V_m$. Then $V \in \coz(X^*)$. Let $f \in C_1(X^*)$ be such that $V = \coz(f)$.
     Applying Lemma \ref{morphism-existence} to the sequence $(f,f_0,f_1,\dots)$, 
     which is pairwise disjoint by the hypothesis, 
     gives us a submorphism $\phi: \wp(\N) \rightarrow C_1(X^*)$
      such that $\phi(\{n\}) = f_{n-1}$ for $n > 0$ and $\phi(\{0\}) = f$. 
      As $\phi(\{0\})$ and $\phi(\{n > 0\})$ are disjoint by Lemma \ref{disjoint-preserv}, 
      we get that $\phi(\{n > 0\})| (\coz (f)) = 0$. 
 Hence, by relabeling $\phi|\wp(\{n>0\})$ so that it domain is $\wp(\N)$, we are done.
\end{proof}

\subsection{A strengthening of Haydon's lemma}

In this subsection we prove a version of  Lemma 1D of \cite{haydon}.
The main difference, besides moving from the Boolean context to function algebras,
 is that the  algebra $\A$ of our Lemma \ref{haydon} may have density $\cc$,
while the Boolean algebra of Lemma 1D of \cite{haydon} needs to have cardinality smaller than $\cc$.

\begin{lemma}\label{haydon}  Let $K$ be  compact Hausdorff space. 
Suppose that $\A$ is a subalgebra of $C(K)$ which contains
pairwise disjoint $f_n\in C_1(K)$ for $n\in \N$ and $\phi:\wp(\N)\rightarrow C(K)$
is a submorphism such that $\phi(\{n\})=f_n$ for each $n\in \N$. Suppose that $|\I|<\cc$ 
and $D_i$, $E_i\subseteq K$  for $i\in \I$.
Moreover assume that $\A$ does not separate $D_i$ and $E_i$ for any $i\in \I$.

Then there is an infinite $A\subseteq \N$ such that for every $B\subseteq A$, the algebra $\A\langle\phi(B)\rangle$
 generated by $\A$ and $\phi(B)$ does not
separate $D_i$ and $E_i$ for any $i\in \I$.
\end{lemma}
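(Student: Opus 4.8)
\emph{Proof plan.} The plan is to strip away the algebra, reducing the statement to a purely combinatorial fact about $\N$ witnessed by an almost disjoint family of size $\cc$; the passage from Haydon's hypothesis ``$|\A|<\cc$'' to the hypothesis ``$|\I|<\cc$'' is exactly what this reduction produces. First I would analyse $\phi(B)$ for $B\subseteq\N$. Since $\phi$ is a submorphism with $\phi(\{n\})=f_n$, Lemmas \ref{disjoint-preserv}, \ref{morphism-nice} and \ref{morphism-monotone} give $\phi(B)\wedge f_n=\phi(\emptyset)=0$ for $n\notin B$ (so $\phi(B)$ vanishes on $\coz f_n$), $f_n=\phi(\{n\})\le\phi(B)$ for $n\in B$ with $\phi(B)=f_n$ on $\coz f_n$, and $0\le\phi(B)\le\phi(\N)$; hence $\phi(B)$ vanishes on $K\setminus(\bigcup_{n\in B}\coz f_n\cup\coz\phi(\N))$. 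I would then arrange — by feeding in a $\phi$ obtained from Lemma \ref{morphism-disjoint} (or with a cosmetic extra argument for the general case) — that $\phi(\N)$ already vanishes off $\bigcup_n\coz f_n$, so that $\phi(B)$ vanishes off $\bigcup_{n\in B}\coz f_n$ altogether.

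The key reduction is then: it suffices to produce one infinite $A\subseteq\N$ such that for every $i\in\I$ the algebra $\A$ does not separate $D_i\setminus\bigcup_{n\in A}\coz f_n$ and $E_i\setminus\bigcup_{n\in A}\coz f_n$. Indeed, for any $B\subseteq A$ we have $\bigcup_{n\in B}\coz f_n\subseteq\bigcup_{n\in A}\coz f_n$, so $\phi(B)$ vanishes on $D_i\setminus\bigcup_{n\in A}\coz f_n$ and on $E_i\setminus\bigcup_{n\in A}\coz f_n$; by Lemma \ref{nonseparation-zero} the algebra $\A\langle\phi(B)\rangle$ does not separate these two sets, and since non-separation passes to supersets (immediately from the definition, as $\overline{h[D]}\cap\overline{h[E]}\supseteq\overline{h[D']}\cap\overline{h[E']}$ for $D'\subseteq D$, $E'\subseteq E$), $\A\langle\phi(B)\rangle$ does not separate $D_i$ and $E_i$ either.

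Next I would set up the combinatorics of this reduced goal. Each $\coz f_n$ is an open $\A$-maximal set (Lemma \ref{lemma-A-maximal}(6)), hence so is $\bigcup_{n\in S}\coz f_n$ for every $S\subseteq\N$, and the sets $\Pi_\A[\coz f_n]$ are pairwise disjoint open subsets of $\nabla\A$. Using Lemma \ref{lemma-A-maximal}(5) and Lemma \ref{lemma-separation-equivalence}(3), one checks that $\A$ fails to separate $D_i\setminus\bigcup_{n\in A}\coz f_n$ from $E_i\setminus\bigcup_{n\in A}\coz f_n$ whenever $\A$ fails to separate one of the three (smaller) pairs built from $\bigcup_{n\notin A}(D_i\cap\coz f_n)$, $\bigcup_{n\notin A}(E_i\cap\coz f_n)$, $D_i\setminus\bigcup_n\coz f_n$ and $E_i\setminus\bigcup_n\coz f_n$. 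Those $i$ for which $\A$ already does not separate $D_i\setminus\bigcup_n\coz f_n$ from $E_i\setminus\bigcup_n\coz f_n$ impose no condition on $A$. For each remaining (``constrained'') $i$ — where $\A$ does not separate $D_i$ from $E_i$ but does separate $D_i\setminus\bigcup_n\coz f_n$ from $E_i\setminus\bigcup_n\coz f_n$ — the non-separation point must be detected along the $\coz f_n$'s, and via Lemma \ref{lemma-separation-equivalence}(3) this yields an upward-closed family $\mathcal D_i\subseteq\wp(\N)$ with $\N\in\mathcal D_i$, $\emptyset\notin\mathcal D_i$, and (crucially, using the disjointness of the sets $\Pi_\A[\coz f_n]$) containing every cofinite set; the goal becomes to find an infinite $A$ with $\N\setminus A\in\mathcal D_i$ for all constrained $i$. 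Describing each $\mathcal D_i$ through the infinite (or finite, nonempty) subsets of $\N$ that $\N\setminus A$ must meet, this is the statement: there is an infinite $A$ not containing, as a subset, any member of a prescribed family $\mathcal Y_i$ of subsets of $\N$, for every constrained $i$. To finish, fix an almost disjoint family $\{A_\eta:\eta<\cc\}$ of infinite subsets of $\N$; if no such $A$ existed, each $A_\eta$ would fail for some constrained $i(\eta)$, and since $|\I|<\cc$ two distinct $\eta,\eta'$ give $i(\eta)=i(\eta')=:i_0$, so $\A$ separates $D_{i_0}$ from $E_{i_0}$ after deleting $\bigcup_{n\in A_\eta}\coz f_n$ and also after deleting $\bigcup_{n\in A_{\eta'}}\coz f_n$; as $A_\eta\cap A_{\eta'}$ is finite and the $\coz f_n$ are disjoint, these two deleted cozero sets agree off a finite union $\bigcup_{n\in F}\coz f_n$, $F\subseteq\N$ finite, and a patching argument resting on Lemma \ref{piecewise-separation} applied over the open $\A$-maximal cover $\{\coz f_n:n\in F\}\cup\{\text{complement}\}$ of the relevant part of $K$ should force $\A$ to separate $D_{i_0}$ from $E_{i_0}$ — contradicting the hypothesis.

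The hard part will be this last patching step. A naive version of it is false (deleting a cozero set can create separation trivially, e.g.\ by emptying one side), so one must isolate the degenerate configurations — above all, the case where $E_{i_0}$ (or $D_{i_0}$) is essentially carried by finitely many $\coz f_n$, which is precisely when the family $\mathcal Y_i$ becomes finite rather than infinite — and dispatch them separately, and then combine the two separators, one per $A_\eta$, across the finitely many overlapping $\coz f_n$'s while keeping them compatible on the rest. This is also the step where the pairwise disjointness and $\A$-maximality of the $\coz f_n$'s are genuinely used, and where one sees why $|\I|<\cc$ (rather than a smallness condition on $\A$) is the right hypothesis, since the diagonalisation only ever needs the map $\eta\mapsto i(\eta)$ on an almost disjoint family of size $\cc$ to be non-injective.
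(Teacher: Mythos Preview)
Your plan has two genuine gaps, and the paper's route is materially different.

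\textbf{You cannot change $\phi$.} The lemma concerns a \emph{given} submorphism; the conclusion is about $\A\langle\phi(B)\rangle$ for that $\phi$. Arranging ``$\phi(\N)$ vanishes off $\bigcup_n\coz f_n$'' by invoking Lemma~\ref{morphism-disjoint} is not available, and the ``cosmetic extra argument for the general case'' is exactly the missing content: for an arbitrary $\phi$ one only has $\coz\phi(B)\subseteq\coz\phi(\N)$, so $\phi(B)$ may be nonzero on $D_i\setminus\bigcup_{n\in A}\coz f_n$, and your reduction via Lemma~\ref{nonseparation-zero} collapses at the first step. The paper never needs this vanishing: its substitute is Lemma~\ref{lemma-ADsubmorphism}, which says that on $\coz\phi(B)\cap\coz\phi(B')$ with $B\cap B'$ finite one has $\phi(B)=\phi(B')=\sum_{n\in B\cap B'}f_n\in\A$, so the polynomial separator $h=g_0+\sum_kg_k\phi(B)^k$ can be replaced on that set by an element of $\A$.

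\textbf{Two almost-disjoint witnesses do not patch.} Even granting your reduction, the last step fails as stated. The cover $\{\coz f_n:n\in F\}\cup\{\text{complement}\}$ is not an \emph{open} $\A$-maximal cover (the complement of a cozero set need not be open), so Lemma~\ref{piecewise-separation} does not apply. More fundamentally, from $\A$ separating $D_i\setminus U_\eta$ from $E_i\setminus U_\eta$ and $D_i\setminus U_{\eta'}$ from $E_i\setminus U_{\eta'}$ (with $U_\eta=\bigcup_{n\in A_\eta}\coz f_n$) you cannot conclude that $\A$ separates the unions $(D_i\setminus U_\eta)\cup(D_i\setminus U_{\eta'})$ from $(E_i\setminus U_\eta)\cup(E_i\setminus U_{\eta'})$: the closures $\overline{\Pi_\A[D_i\setminus U_\eta]}$ and $\overline{\Pi_\A[E_i\setminus U_{\eta'}]}$ may meet. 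The paper circumvents this by pigeonholing not only on $i\in\I$ but also on the \emph{shape} of the separator (its degree $m$ and rational target intervals $U_1,U_2\subseteq\R$), obtaining \emph{four} indices $\xi^0,\dots,\xi^3$ with common data; it then takes an open $\A$-maximal cover $\V$ on whose members the constant terms $g_{j,0}$ oscillate by at most $\tfrac13\operatorname{dist}(U_1,U_2)$, runs a three-case analysis on each $V\in\V$ (which of $D_i\cap V$, $E_i\cap V$ are contained in $X_j=\coz\phi(B_{\xi^j})$), and uses ``four indices, three cases'' to find two $j$'s in the same case. In each case the functions $\hat h_{j,j'}=g_{j,0}+\sum_kg_{j,k}\bigl(\sum_{n\in B_{\xi^j}\cap B_{\xi^{j'}}}f_n\bigr)^k\in\A$ separate $D_i\cap V$ from $E_i\cap V$, and Lemma~\ref{piecewise-separation} then yields the contradiction. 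Your proposal locates where $|\I|<\cc$ enters (the almost-disjoint pigeonhole), but it does not supply the oscillation cover, the degree/interval pigeonhole, or the $\hat h_{j,j'}$ trick that together do the real work.
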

\begin{proof} Suppose that the lemma is false and let us aim at a contradiction.
Let $\{A_\xi: \xi<\cc\}$ be an almost disjoint family of infinite subsets of $\N$ i.e., such that
the intersection of any two distinct elements of it is finite.
As the lemma is assumed to be false, for every $\xi<\cc$ there are (necessarily) infinite $B_\xi\subseteq A_\xi$
such that each algebra $\A\langle\phi(B_\xi)\rangle$  separates $D_{i_\xi}$ and $E_{i_\xi}$ for some $i_\xi\in \I$.
Since $\I$ has cardinality smaller than $\cc$ there is $i\in \I$ such that $i_\xi=i$ 
for uncountably many values of $\xi<\cc$. We fix such $i\in \I$ from this point on.

For all these uncountably many values $\xi<\cc$ we have
disjoint $U_1^\xi, U_2^\xi\subseteq\R$ which are finite unions of bounded open intervals with rational end-points
and we have $\underbar{h}_\xi\in \A\langle\phi(B_\xi)\rangle$ and $m_\xi\in \N$ such that
$$\underbar{h}_\xi= \underbar{g}_{\xi, 0} + \sum_{1 \leq k\leq m_\xi}\underbar{g}_{\xi, k}\phi(B_{\xi})^k$$
and $\underbar{g}_{\xi, k} \in \A$ for $k\leq m_\xi$ and  $\overline{\underbar{h}_\xi \left[{D_i} \right]} \subseteq U_1^\xi$ and  
$\overline{\underbar{h}_\xi \left[{E_i} \right]} \subseteq U_2^\xi.$

As there are countably many such triples $U_1^\xi, U_2^\xi, m_\xi$,  this means that there are 
distinct $\xi^j<\cc$ for $j<4$ and there are $g_{j, k}\in \A$ for $k\leq m$ and $j<4$ for some $m\in \N$ 
and open $U_1, U_2\subseteq\R$ (which are finite unions of bounded open intervals with rational end-points) distant
by some $\delta>0$ such that for  
each $j<4$ and
$$h_j= g_{j, 0} + \sum_{1 \leq k\leq m}g_{j, k}\phi(B_{\xi^j})^k\leqno (1)$$
and we have 
$$\overline{h_j \left[{D_i} \right]} \subseteq U_1 \ \ \hbox{and} 
\ \ \overline{h_j \left[{E_i} \right]} \subseteq U_2.\leqno (2)$$

Let $\V$ be a finite open cover of $K$ consisting of $\A$-maximal sets (Definition \ref{def-maximal}) such that for
 each $V \in \V$ and each $j < 4$ we have that $\operatorname{osc} g_{j, 0}(V) \leq \delta/3$. 
 To see that such a cover exists note that for every $r \in \R$ and $j < 4$ the preimage
  $(g_{j, 0})^{-1} [(r,r+\delta/3)]$ is an open $\A$-maximal set by 
  Lemma \ref{lemma-A-maximal} (6) as $g_{j, 0} \in \A$ 
  and the value $\operatorname{osc}g_{j, 0}$ is clearly at most $\delta/3$ on such sets.

Fix $V \in \V$. For $j < 4$ define
\begin{align*}
    X_j = \coz(\phi(B_{\xi^j})).
\end{align*}
Note that by (1) for each $j < 4$ we have 
$$h_j | (K \setminus X_j) = g_{j, 0} |(K \setminus X_j).\leqno (3)$$
Now for distinct $j, j'<4$ define
$$
    \hat{h}_{j, j'} := g_{j, 0} + \sum_{1 \leq k \leq m} g_{j, k}  
    \left(\sum_{n \in (B_{\xi^j} \cap B_{\xi^{j'}})} f_n \right)^k.\leqno (4)
$$
Then clearly $\hat{h}_{j, j'} \in \A$  as  $B_{\xi^j} \cap B_{\xi^{j'}}$ is finite for distinct $j, j'<4$
and $g_{j, k}, f_n\in \A$ for $n\in \N$ and $k\leq m$. Let us calculate the values
of the functions $\hat{h}_{j, j'}$ inside and outside of the sets $X_j\cap X_{j'}$ for distinct $j, j'<4$.

First note that by Lemma \ref{morphism-nice} for distinct $j , j'< 4$ we have 
$$\coz\Bigg(\sum_{n \in (B_{\xi^j} \cap B_{\xi^{j'}})} f_n\Bigg)\subseteq X_j\cap X_{j'},$$
so
$$\hat{h}_{j, j'} | (K \setminus (X_j\cap X_{j'})) = g_{j, 0} |(K \setminus (X_j\cap X_{j'})).\leqno (5)$$
Secondly, it follows from Lemma \ref{morphism-nice} that
\begin{align*}
    \phi(B_{\xi^j})|(X_j\cap X_{j'}) =
     \Bigg(\sum_{n \in (B_{\xi^j} \cap B_{\xi^{j'}})} f_n\Bigg)|(X_j\cap X_{j'}),
\end{align*}
and so by (1) and (4) we obtain
$$\hat{h}_{j, j'} | (X_j\cap X_{j'}) = {h}_j | (X_j\cap X_{j'}), \leqno(6)$$
for distinct $j, j'<4$.

Now note that
for each $j < 4$ one of the following three cases holds:
\begin{enumerate}[a)]
    \item $(D_i \cap V) \subseteq X_j$ and $(E_i \cap V) \subseteq X_j$;
    \item $(D_i \cap V) \subseteq X_j$ and $(E_i \cap V) \not\subseteq X_j$;
    \item $(E_i \cap V) \subseteq X_j$ and $(D_i \cap V) \not\subseteq X_j$.
\end{enumerate}
Indeed, if none of the above options hold, we can pick $x_0 \in (D_i \cap V) \setminus X_j$
 and $x_1 \in (E_i \cap V) \setminus X_j$. But then by (3) we have
\begin{align*}
    |h_j(x_0) - h_j(x_1)| = |g_{j, 0}(x_0) - g_{j, 0}(x_1)| \leq \operatorname{osc} g_{j, 0} (V) \leq \delta/3
\end{align*}
which contradicts the fact that $\operatorname{dist}(U_1,U_2) = \delta$ and (2). 
Since we have three cases  and four numbers $j < 4$, 
there are $j_1,j_2 < 4$ which satisfy the same case. Let us, for simplicity, assume that $j_1 = 0$ and $j_2 = 1$.

We are now going to show that in each of the cases a) - c) both of
the functions $\hat{h}_{0, 1}$ and $\hat{h}_{1, 0}$ separate $D_i \cap V$ and $E_i \cap V$.

First, suppose that the case a) holds for $j=0,1$. Then for distinct $j, j'\in\{0,1\}$  by (6) and (2)  we have
$$ \overline{\hat{h}_{j, j'} [D_i \cap V]} = 
    \overline{h_j [D_i \cap V]} \subseteq U_1, $$
    $$\overline{\hat{h}_{j, j'} [E_i \cap V]} = 
    \overline{h_j [E_i \cap V]} \subseteq U_2.$$
and so both $\hat{h}_{0, 1}$ and $\hat{h}_{1, 0}$ separate $D_i \cap V$ and $E_i \cap V$.

Suppose that the case b) holds for $j=0,1$. 
Then as in the previous case for distinct $j, j'\in\{0,1\}$
by (6) and (2) we have
$$ \overline{\hat{h}_{j, j'} [D_i \cap V]} = 
    \overline{h_{j} [D_i \cap V]} \subseteq U_1.\leqno (7)$$
    Now let $x_j\in (E_i\cap V)\setminus X_j$. 
As $g_0^j(x_j)=h_j(x_j)\in U_2$  by (2) and (3), it follows from the fact that $V\in \mathcal V$ that 
\begin{enumerate}[(8)]
\item $g_{j, 0}[V]\subseteq \{x\in \R: \operatorname{dist}(x, U_2)\leq\delta/3\}.$
\end{enumerate}
So, by (6), (5) (8) and (2)
$$\hat{h}_{j, j'}[E_i\cap V]\subseteq 
\hat{h}_{j, j'}[X_0\cap X_1\cap E_i\cap V]\cup \hat{h}_{j, j'}[V\setminus(X_0\cap X_1)]
\subseteq \{x\in \R: \operatorname{dist}(x, U_2)<\delta/3\},$$
and  so combining the above with (7) we conclude that
  $\overline{\hat{h}_{j, j'} [E_i \cap V]}$ and  $\overline{\hat{h}_{j, j'} [D_i \cap V]}$
 are disjoint by the choice of $\delta$ as needed for the separation of $D_i\cap V$ and $E_i\cap V$ by $\hat{h}_{1,0}$
 or by $\hat{h}_{0,1}$.
The case c) is analogous to the case b).

Hence, we have found for every $V \in \V$ a function from $\A$ which separates $D_i \cap V$ and $E_i \cap V$, 
and thus by Lemma \ref{piecewise-separation}, we get that $\A$ separates $D_i$ and $E_i$ -- a contradiction
with the hypothesis of the lemma.
\end{proof}

\subsection{Algebras with few$^*$ operators}

Recall from the Introduction when a Banach space of the form $C(K)$
has few$^*$ operators and what is a weak multiplier.

\begin{proposition}\label{few-star}
Suppose that $L$ is a compact Hausdorff almost $P$-space such that
for every pairwise disjoint $\{f_n :n\in \N\} \subseteq C_1(L) \setminus \{0\}$ and every pairwise disjoint 
$\{V_m: m\in \N\} \subseteq \coz(L)$ such that $f_n|V_m=0$ for every $n, m\in \N$
there is a sublattice $\mathcal P$ 
of $\wp(\N)$ with the least element $\emptyset$ and a submorphism $\phi: \mathcal P \rightarrow C_1(L)$ such that 
\begin{enumerate}[(a)]
\item $\mathcal P$ contains all finite sets and for every
 $A \in \mathcal{P}$ and $k \in \N$, the set $A \setminus k \in \mathcal{P}$,
\item $\phi(\{n\})=f_n$ for each $n\in \N$,
\item $\phi(A)|V_m=0$ for every $m\in \N$ and $A \in \mathcal{P}$,
\item for every infinite $A\subseteq\N$  there is $B\subseteq A$ such that
 \begin{enumerate}[(i)]
 \item $B\in \mathcal P$
 \item $C_1(L)$ does not separate $\bigcup\{V_m: m\in B\}$ and $\bigcup\{V_m: m\in A\setminus B\}$
 \end{enumerate}
\end{enumerate}
Then $C(L)$ has few$^*$ operators.
\end{proposition}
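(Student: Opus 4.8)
The goal is to show that every bounded operator $T$ on $C(L)$ has an adjoint of the form $gI + S$ with $g$ a Borel function on $L$ and $S$ weakly compact. The standard strategy (following Theorem 3.1 of \cite{antonio} and the analysis of few$^*$ operators) is to argue by contradiction: if $T^*$ is not of this form, then it must move some measures ``badly,'' and this badness can be concentrated on a sequence of points and detected by a pairwise disjoint sequence of functions. More precisely, I would first recall the characterization of weak multipliers: $T^*$ fails to be $gI+S$ with $S$ weakly compact exactly when there exist $\varepsilon>0$, a sequence of distinct points $(x_n)_{n\in\N}$ in $L$, and a sequence of functions witnessing that $T^*(\delta_{x_n})$ has a ``variation part'' of size $\geq\varepsilon$ sitting far away from $x_n$ — i.e. one extracts pairwise disjoint open sets $V_m\in\coz(L)$ around the relevant support masses of $T^*(\delta_{x_n})$, and pairwise disjoint $f_n\in C_1(L)\setminus\{0\}$ with $\coz f_n$ a small neighbourhood of $x_n$, arranged (using that $L$ is an almost $P$-space, passing to a subsequence, and shrinking) so that $f_n|V_m=0$ for all $n,m$. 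This is the routine ``preparation'' step that isolates the combinatorial data fed into the hypothesis.

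Next I would feed this data $(f_n)_{n}$ and $(V_m)_{m}$ into the hypothesis to obtain the sublattice $\mathcal P\subseteq\wp(\N)$ and the submorphism $\phi:\mathcal P\to C_1(L)$ satisfying (a)--(d). The element $\phi(A)\in C_1(L)$ is then a genuine continuous function on $L$ which ``looks like $\bigvee_{n\in A}f_n$'' but lives in $C(L)$; by (b) and Lemma \ref{morphism-nice}, on $\coz f_n$ it agrees with $f_n$, and by (c) it vanishes on all the $V_m$. Applying $T$ to $\phi(A)$ and pairing with the measures $T^*(\delta_{x_n})$ — using that the relevant mass of $T^*(\delta_{x_n})$ sits in $\bigcup_m V_m$ where $\phi(A)$ vanishes — one derives, for each infinite $A\subseteq\N$ and the splitting $B\subseteq A$ provided by (d), a contradiction: on one hand $T(\phi(B))$ must separate the closures of $\bigcup\{V_m:m\in B\}$ and $\bigcup\{V_m:m\in A\setminus B\}$ (because $\phi(B)$ itself does, being near $1$ on one family of $V_m$'s... wait, it vanishes there), and on the other hand (d)(ii) says $C_1(L)$ does not separate these sets. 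The precise contradiction is the $\varepsilon$-gap: the operator $T$ forces a jump of size $\geq\varepsilon$ between the two unions of $V_m$'s, contradicting (d)(ii) that no continuous function separates them.

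The main obstacle — and the heart of the proof — is organizing the preparation step correctly so that the combinatorial hypothesis is exactly applicable, and then tracking how the submorphism $\phi$ transports the separation/non-separation dichotomy through the operator $T$. Specifically: one must ensure the points $x_n$, the functions $f_n$, and the sets $V_m$ are chosen so that (i) $T^*(\delta_{x_n})$ restricted to $\bigcup_m V_m$ carries mass bounded below, (ii) the $f_n$'s and $V_m$'s are mutually ``orthogonal'' as required, and (iii) passing to the subsequence indexed by $B$ given by (d) still witnesses the bad behaviour of $T^*$. Properties (a) (closure of $\mathcal P$ under $A\mapsto A\setminus k$) and the ability to iterate are what let us keep extracting such subsequences. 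The delicate estimate is bounding $\|T(\phi(B))\|$ and the oscillation of $T(\phi(B))$ across the two unions $\bigcup_{m\in B}V_m$ and $\bigcup_{m\in A\setminus B}V_m$ from below by a fixed $\varepsilon>0$ independent of the splitting — this is where the assumption that $S=T^*-gI$ is \emph{not} weakly compact gets used via a gliding-hump / $\ell_1$-sequence argument on the measures, exactly as in Proposition 3.1 of \cite{antonio}, and it is the step most likely to require care with the almost $P$-space property of $L$ (to guarantee the relevant $G_\delta$ sets have interior, so that genuine continuous functions $f_n$ with the prescribed cozero behaviour exist).
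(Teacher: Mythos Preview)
Your outline has the right overall shape but contains a substantive gap and a confused setup.

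\textbf{Setup confusion.} In the preparation step, the roles of the $f_n$ and the $V_m$ are reversed relative to what the hypothesis requires and what the paper does. Using the weak-multiplier characterization (Lemma~2.1 of \cite{few}) one obtains pairwise disjoint $f_n\in C_1(L)$ and points $x_n$ with $f_n(x_n)=0$ and $|T(f_n)(x_n)|>\varepsilon$. Thus $x_n\notin\coz f_n$; the $f_n$ are \emph{not} bump functions around the $x_n$. The $V_m$ are then obtained as small open neighbourhoods of the $x_m$ (using Ramsey, the Rosenthal lemma, and the almost $P$-property to push the estimates $|T(f_m)|>\varepsilon$ and $\sum_{n\neq m}|T(f_n)|<\varepsilon/3$ from the single points $x_m$ to open sets $V_m$). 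You noticed the resulting inconsistency yourself (``wait, it vanishes there''): $\phi(B)$ vanishes on all $V_m$, so it is $T(\phi(B))$, not $\phi(B)$, that must do the separating.

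\textbf{The real gap.} The crux of the argument, which your sketch omits, is that there is no a priori reason why $T(\phi(B))$ should behave on $V_m$ like $\sum_{n\in B}T(f_n)$. The function $\phi(B)$ agrees with $f_n$ on $\coz f_n$ for $n\in B$ and vanishes on $V_m$, but globally $\phi(B)\neq\sum_{n\in B}f_n$ (the latter need not even be continuous), and $T$ sees the global function. The paper handles this by a genuine dichotomy. In Case~1 one assumes there \emph{is} an infinite $A$ and nonempty $U_m\subseteq V_m$ with $T(\phi(B))|U_m=\sum_{n\in B}T(f_n)|U_m$ for all $B\in\mathcal P$, $B\subseteq A$; then the $\varepsilon/3$-estimates show $T(\phi(B))$ separates $\bigcup_{m\in B}U_m$ from $\bigcup_{m\notin B}U_m$, contradicting (d). In Case~2 this fails for every infinite $A$; one then runs a transfinite recursion of length $\omega_1$ over an almost disjoint family $(A_\xi)_{\xi<\omega_1}$, using (d)(i) to pick $B_\xi\subseteq A_\xi$ in $\mathcal P$ witnessing an error $|T(\phi(B_\xi))(x)-\sum_{n\in B_\xi}T(f_n)(x)|>1/n_\xi$ on some shrinking $U^\xi_{m_\xi}\subseteq V_{m_\xi}$ (the almost $P$-property keeps these nonempty at limit stages). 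A pigeonhole on $(m_\xi,n_\xi)$ and property~(a) (to pass to $B_\xi\setminus k$ and make the $B_\xi$'s genuinely disjoint) then produce arbitrarily many disjoint $\phi(B'_{\xi_j})$ whose images under $T$ are simultaneously bounded below on a common point, contradicting $\|T\|<\infty$. Your proposal covers only Case~1 and does not indicate how to handle the failure of the linearity-type identity $T(\phi(B))\approx\sum_{n\in B}T(f_n)$; without Case~2 the argument does not close.
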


\begin{proof}
Suppose  that $T:C(L)
\rightarrow C(L)$ is not a weak multiplier. By 2.1 of \cite{few} 
there is an $\varepsilon>0$,  and a pairwise  disjoint
sequence $\{f_n: n\in \N\}\subseteq C_1(L)$  and points $(x_n)_{n \in \N} \subseteq L$ such that for every $n \in \N$ we have
$f_n(x_n)=0$ and 
$$|T(f_n)(x_n)|>\varepsilon. \leqno 1)$$
Since $T$ is bounded we may conclude that $\{x_n:n\in \N\}$ is infinite and so
by passing to a an infinite subset we may assume that there are open pairwise disjoint $V_n'$s in $L$
such that $x_n\in V_n'$ for every $n\in \N$.
By applying the Ramsey theorem \cite[Theorem 9.1]{jech}
to the coloring $c: [\N]^2\rightarrow \{0,1\}$ where $c(n,m)=1$ if and only if $f_n(x_m)\not=0$ or $f_m(x_n)\not=0$,
and the fact that $\{f_n: n\in \N\}$ is pairwise disjoint, we may assume, by passing to an infinite subset of $\N$, that $f_n(x_m)=0$
for any two distinct $n,m\in \N$.

By applying the Rosenthal lemma \cite[Lemma 1.1]{rosenthal} and passing to a
 further infinite subset of $\N$, we may assume that
$\sum_{n\in \N\setminus\{m\}} |T(f_n)(x_m)|<\varepsilon/3$ holds for all $m\in \N$.
For each $m\in\N$ the set
$$\bigcap_{n\in\N}f_n^{-1}[\{0\}]\cap \bigcap_{n\in\N\setminus{m}}(T(f_n))^{-1}[\{T(f_n)(x_m)\}]
\cap(|T(f_m)|)^{-1}[(\varepsilon, \infty)]\cap V_m'$$
is a $G_\delta$ subset of $L$ which is not empty as witnessed by $x_m$. As $L$ is an almost $P$-space
this set has a nonempty interior $V_m$.
So $\{V_m: m\in \N\}\cup\{\coz(f_n): n\in \N\}$ is pairwise disjoint and
$$\sum_{n\in \N\setminus\{m\}} ||T(f_n)|V_m||_\infty < \varepsilon/3\leqno 2)$$
for all $m\in \N$ as well as for all $x\in V_m$ we have 
$$|T(f_m)(x)|>\varepsilon.\leqno 3)$$
Note that it follows from 2) that for any $A \subseteq \N$ we have that $\sum_{n \in A}T(f_n)|V_m$
 is a well defined bounded continuous function on $V_m$ for any $m \in \N$.

Let $\phi$ and $\mathcal P$ be as in the statement  of the proposition
for the pair $\{f_n: n\in \N\}$ and $\{V_m: m\in \N\}$.

\noindent{\bf Case 1.} There is an infinite $A\subseteq\N$
 and there are nonempty sets $(U_m)_{m \in \N} \subseteq \coz(L)$ with $U_m \subseteq V_m$ for each $m \in \N$, such that  
$$T(\phi(B))|U_m=\sum_{n\in B} T(f_n)|U_m$$
holds for every $B\subseteq A$, $B\in\mathcal{P}$.
\vskip 6pt
\noindent In this case, by 2) and 3), if $B\subseteq A$ and $m\in B$, then we have for every $x\in U_m$ that
$$|T(\phi(B))(x)| = \left| \sum_{n\in B} T(f_n)(x) \right| > 2 \varepsilon/3$$
and if $m\not \in B$, then for every $x\in U_m$ we have
$$|T(\phi(B))(x)| = \left| \sum_{n\in B} T(f_n)(x) \right| < \varepsilon/3$$
Then $T(\phi(B))$ separates $\bigcup\{U_m: m\in B\}$ from $\bigcup\{U_m: m\not\in B\}$ 
whenever $B \subseteq A$, $B\in\mathcal{P}$. This contradicts  
condition (d) of the proposition.
\vskip 6pt
\noindent{\bf Case 2.} Case 1 does not hold.
\vskip 6pt
\noindent Assuming the negation of the condition from Case 1, we will carry out
a transfinite recursive construction which will contradict the boundedness of the operator $T$. 
Let $\{A_\xi: \xi<\omega_1\}$ be an almost disjoint (i.e., such that pairwise intersections
of its elements are finite) family 
of infinite subsets of $\N$. 
For all $\xi<\omega_1$ construct:
\begin{itemize}
\item an infinite $B_\xi\subseteq A_\xi$ such that $B_\xi \in \mathcal{P}$,
\item a pairwise disjoint $\{U^\xi_m: m\in \N\}\subseteq \coz(L)$ such that 
$\emptyset \neq \overline{U_m^{\xi'}}\subseteq U_m^{\xi}\subseteq V_m$ for all $m\in \N$
and all $\xi < \xi'<\omega_1$,
\item $m_\xi, n_\xi\in \N\setminus\{0\}$,
\end{itemize}
such that for all $x\in U^\xi_{m_\xi}$ we have
$$\left| T(\phi(B_\xi))(x)-\sum_{n\in B_\xi} T(f_n)(x) \right| > 1/n_\xi.\leqno 4)$$

We make this construction recursively. Suppose we have already carried the construction out for all $\xi < \xi' < \omega_1$.
 We find for each $m \in \N$ open sets $V^{\xi'}_m$
  such that $V^{\xi'}_m \subseteq \bigcap_{\xi < \xi'} U^{\xi}_m$: if $\xi' = 0$, 
  we take $V^{\xi}_m = V_m$, if $\xi' = \xi + 1$ is a successor ordinal, we can take $V^{\xi'}_m = U^{\xi}_m$,
   and finally if $\xi'$ is a limit ordinal, we use the fact that $L$ is an almost $P$-space
    to find such $V^{\xi'}_m$ (as the set $\bigcap_{\xi < \xi'} U^{\xi}_m = \bigcap_{\xi < \xi'} \overline{U^{\xi}_m}$ 
    is nonempty $G_\delta$, and so has nonempty interior). 
    Now, as Case 1 does not hold, it is false in particular for $A = A_{\xi'}$ and $U_m = V^{\xi'}_m$. 
    Hence, we can find $B_{\xi'} \in \mathcal{P}$ and $m_{\xi'},n_{\xi'} \in \N$ 
    such that for some $x \in V^{\xi'}_{m_{\xi'}}$ we have
$$\left| T(\phi(B_{\xi'}))(x) - \sum_{n \in B_{\xi'}} T(f_n)(x) \right| > 1/n_{\xi'}.$$
Now we pick a cozero neighbourhood $U_{m_{\xi'}}^{\xi'}$ of $x$ such that the above
 inequality holds on $U_{m_{\xi'}}^{\xi'}$ and $\overline{U_{m_{\xi'}}^{\xi'}} \subseteq V^{\xi'}_{m_{\xi'}}$
  (for this we use that $\sum_{n \in B_{\xi'}} T(f_n)$ is continuous on $V_m$'s, which follows from 2)). 
  For any other $m \neq m_{\xi'}$ we pick any nonempty cozero $U^{\xi'}_m$ such
   that $\overline{U^{\xi'}_m} \subseteq V^{\xi'}_m$.

A single pair $(m', n')$ has appeared infinitely many times as $(m_\xi, n_\xi)$. Let $i\in \N$ be 
such that $i/3n'>||T||$ and consider $\xi_1<...< \xi_i$ such that $(m_{\xi_i}, n_{\xi_i})=(m', n')$.
Let $k\in \N$ be such that the pairwise intersections of all $B_{\xi_1}, ..., B_{\xi_i}$
are included in $\{0, ..., k-1\}=k$ and such that 
$$\sum_{n\in \N\setminus k} ||T(f_n)|V_{m'}||_\infty < 1/3n'.\leqno 5)$$
The latter condition follows from the convergence of the series in 2).

We set $B_\xi'=B_\xi\setminus k$ for $\xi < \omega_1$ and note that $B_\xi' \in \mathcal{P}$ by
hypothesis (a) of the proposition. Then 
$$\phi(B_\xi) = \phi(B_\xi' \vee (B_\xi \cap k)) = \phi(B_\xi') \vee \phi(B_\xi \cap k)
= \phi(B_\xi')+\sum_{n\in B_\xi\cap k} f_n.$$
We conclude from 4) that 
$$\left| T(\phi(B_{\xi_j}'))(x)-\sum_{n\in B_{\xi_j}'} T(f_n)(x) \right| > 1/n'$$
for all $x\in U^{\xi_j}_{m'}$ and all $1\leq j\leq i$. 
By 5) this implies that
$|T(\phi(B_{\xi_j}'))(x)|>2/3n'$ for
all $x\in U^{\xi_j}_{m'}$ for all  $1\leq j\leq i$, and consequently for some  $F\subseteq \{1, ..., i\}$ 
of cardinality not smaller than $i/2$ and any $x\in \bigcap_{j\in F}U^{\xi_j}_{m'}$ we have
$$\left| \sum_{j\in F}T(\phi(B_{\xi_j}'))(x) \right| > (2/3n')(i/2)>||T||.$$
However, as $B_{\xi_j}'$, $1 \leq j \leq i$ are pairwise disjoint, so are $\phi(B_{\xi_j}')$, $1 \leq j \leq i$.
As $\phi(B_{\xi_j}')$'s are in $C_1(L)$, it  follows that $||\sum_{j\in F}\phi(B_\xi')|| \leq 1$ and so
$||T(\sum_{j\in F}\phi(B_{\xi_j}'))||\leq ||T||$, a contradiction.
\end{proof}

\section{Connectedness of Gelfand spaces}

In  this section we fix a Banach space $\XX$ of density
 at most continuum and set $\X= B_{\XX^*} \times \R^2$, where $B_{\XX^*}$ is
 the unit dual ball of $\XX$ with the weak$^*$ topology. 
 Then $\X$ is a locally compact $\sigma$-compact noncompact 
 connected Hausdorff space of weight at most $\cc$ and so by Propositions \ref{weight-remainder}, \ref{X*-Fspace},
  and \ref{remainder-P-space}
 the \v Cech-Stone remainder $\X^*$ of $\X$ is an $F$-space and an almost $P$-space of weight at most $\cc$, so
 the results of Section 4 apply to $\X^*$. The aim of this section is to
 prove a sufficient condition for  an algebra $\A\subseteq C(\X^*)$
 which implies that $\nabla\A\setminus F$ is connected for every finite $F\subseteq\nabla\A$ 
 (Proposition \ref{main-connected}). All arguments in this section are simple generalizations of
 geometric arguments for $(\R^2)^*$ using the fact that $B_{\XX^*}$ is locally convex and compact  in the
 weak$^*$ topology. Some other work with the connectedness in
 the \v Cech-Stone remainders can be found in \cite{kp, alan-kp, zhu}.
 
 A general element of $\X$ will be denoted as $(x,y,z)$
  where $x \in B_{\XX^*}$ and $y,z \in \R$. If $(y,z) \in \R^2$, we use $|(y,z)|$
   to denote its Euclidean norm $\sqrt{y^2 + z^2}$.
Before proceeding, let us recall the following well known results for further reference
 (see e.g. Theorem 6.1.9. and Corollaries 6.1.11, 6.1.13, 6.1.19. of \cite{engelking}).

\begin{lemma} \label{connected-union} Let $Y$ be a topological Hausdorff space.
\begin{enumerate}
    \item If  $Z_i$, for ${i \in I}$, and $Z$ are 
     connected subspaces of $Y$ such that $Z \cap Z_i \neq \emptyset$ for every $i \in I$,
     then $Z \cup \bigcup_{i \in \I} Z_i$ is connected.
     \item If $Z\subseteq Y$ is dense in $Y$ and connected, then $Y$ is connected.
     \item If $(Z_n)_{n \in \N}$ is a  decreasing sequence of connected compact subspaces of $Y$,
     then $\bigcap_{n\in \N}Z_n$ is connected and compact.
    \item If any two points of $Y$ belong to a connected subspace of $Y$, then $Y$ is connected.
\end{enumerate}
\end{lemma}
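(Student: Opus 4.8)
The plan is to establish all four items by the standard \emph{clopen partition} technique, deriving (4) from (1).

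For (1), I would argue by contradiction: suppose $W := Z \cup \bigcup_{i \in I} Z_i$ is disconnected, witnessed by a partition $W = A \cup B$ into disjoint nonempty relatively clopen sets. Since $Z$ is connected it is contained in one of the two pieces, say $Z \subseteq A$. For each $i \in I$, connectedness of $Z_i$ forces $Z_i \subseteq A$ or $Z_i \subseteq B$; but $Z_i \cap Z \neq \emptyset$, so $Z_i$ meets $A$, and hence $Z_i \subseteq A$. Therefore $W = A$ and $B = \emptyset$, a contradiction. Item (4) then follows immediately: fixing $y_0 \in Y$ (we may assume $Y \neq \emptyset$) and choosing for each $y \in Y$ a connected set $C_y$ with $y_0, y \in C_y$, one applies (1) with $Z = \{y_0\}$ and the family $(C_y)_{y \in Y}$ to see that $Y = \bigcup_{y \in Y} C_y$ is connected.

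For (2), any partition of $Y$ into disjoint nonempty open sets $A, B$ restricts to a partition of the dense connected subspace $Z$ into relatively open sets; connectedness puts $Z$ inside one piece, say $Z \subseteq A$, and then $B$ is a nonempty open set disjoint from the dense set $Z$ -- impossible.

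For (3), compactness of $\bigcap_{n} Z_n$ is clear, since each $Z_n$ is a closed subset of $Y$ (being compact in the Hausdorff space $Y$) and the intersection is a closed subspace of the compact space $Z_0$. For connectedness, suppose $\bigcap_{n} Z_n = A \cup B$ with $A, B$ disjoint, nonempty and closed in $Y$; they are then compact, and since $Y$ is Hausdorff we may pick disjoint open sets $U \supseteq A$ and $V \supseteq B$. Now $(Z_n \setminus (U \cup V))_{n \in \N}$ is a decreasing sequence of compact sets whose intersection is $\left(\bigcap_{n} Z_n\right) \setminus (U \cup V) = \emptyset$, so by the finite intersection property some $Z_N \subseteq U \cup V$; then $Z_N = (Z_N \cap U) \cup (Z_N \cap V)$ exhibits $Z_N$ as a union of two disjoint nonempty relatively open sets, contradicting the connectedness of $Z_N$. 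The only place requiring any genuine care is item (3), where the Hausdorff hypothesis on $Y$ is used to separate the two compact pieces $A$ and $B$ before invoking compactness of the $Z_n$; the remaining items are entirely routine.
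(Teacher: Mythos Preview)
Your proposal is correct and entirely standard. The paper does not actually prove this lemma: it simply records the four statements as well-known and refers to Engelking's \emph{General Topology} (Theorem 6.1.9 and Corollaries 6.1.11, 6.1.13, 6.1.19) for proofs. Your arguments are essentially the textbook ones found there, so there is no meaningful difference in approach to discuss.
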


\begin{definition} \label{definition-functions}
We use the following notation.
\begin{enumerate}
\item  $B(v,r) = \{u \in \R^2 : |u-v| < r\}$
for $v \in \R^2$ and $r\in\R_+$.
\item $\overline{B}(v,r) = \{u \in \R^2 : |u-v| \leq r\}$
for $v \in \R^2$ and $r\in\R_+$.
\item $l(\alpha, I) = \{r \alpha: r \in I\}$ for $\alpha$ in the unit sphere of $\R^2$ and $I\subseteq \R_+$.
\item $U(Y, \epsilon) = \{(y,z) \in \R^2: \operatorname{dist}((y,z),Y) < \epsilon\}$ for $Y\subseteq \R^2$
and $\epsilon>0$.
\item $S_{I}=\{(x,y,z) \in \X: |(y,z)| \in I\}$ for $I\subseteq\R_+$. 

\end{enumerate}
\end{definition}

The following lemma contains several statements about the above sets.
Recall that by an \textit{interval} we mean a connected subset of $\R$.

\begin{lemma} \label{remark-level-sets}
    The following holds true:
    \begin{enumerate}
        \item Let $I \subseteq \R_+\cup\{0\}$ be an interval. Then $S_I$ is connected. 
        Moreover, if $I$ is open (resp. closed, resp. has nonempty interior) in $\R_+\cup\{0\}$, then 
        $S_I$ is open (resp. closed, resp. has nonempty interior) in $\X$.
      \item Let $I, J \subseteq \R_+$ be  intervals and   $\alpha$ be in the unit sphere of $\R^2$.
      Then  $I\cap J\not=\emptyset$ if and only if 
      $S_I\cap (B_{\XX^*}\times l(\alpha, J))\not=\emptyset$.
        \item For every interval $J\subseteq \R_+$, $\varepsilon>0$ and $\alpha$ in the unit sphere of $\R^2$ the
        sets $l(\alpha, J)$, $B_{\XX^*}\times l(\alpha, J)$,
         $U(l(\alpha, J), \epsilon)$ and $B_{\XX^*}\times U(l(\alpha, J), \epsilon)$ are connected.
    \end{enumerate}
\end{lemma}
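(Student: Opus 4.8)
The plan is to reduce all three items to two elementary observations: the ``radius'' map $N\colon\X\to\R_+\cup\{0\}$ given by $N(x,y,z)=\sqrt{y^2+z^2}$ is a continuous surjection (onto, since $N(x_0,t,0)=t$ for a fixed $x_0\in B_{\XX^*}$ and every $t\ge 0$) with $S_I=N^{-1}[I]$; and $B_{\XX^*}$, being convex, is nonempty and connected, so taking products with it preserves connectedness. I would also record at the outset that $l(\alpha,I)$ is the continuous image of the interval $I$ under $r\mapsto r\alpha$, hence connected for every interval $I\subseteq\R_+\cup\{0\}$ and every unit vector $\alpha$; this already gives the first two assertions of (3), since $B_{\XX^*}\times l(\alpha,J)$ is then a product of connected spaces.

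For (1), I would write $S_I=B_{\XX^*}\times A_I$ with $A_I=\{(y,z)\in\R^2:|(y,z)|\in I\}$, so that it suffices to show $A_I$ is connected. Since $A_I=\bigcup_\alpha l(\alpha,I)$, a union of connected sets indexed by the unit vectors $\alpha$, I would invoke Lemma \ref{connected-union} (1): if $0\in I$ all of these rays contain the common point $0$; if $0\notin I$, fixing $s_0\in I$, they all meet the connected circle $\{v\in\R^2:|v|=s_0\}\subseteq A_I$. Either way $A_I$, and hence $S_I$, is connected. The topological clauses then follow from the continuity of $N$: $S_I=N^{-1}[I]$ is open (resp.\ closed) in $\X$ whenever $I$ is open (resp.\ closed) in $\R_+\cup\{0\}$, and if $O$ is a nonempty subset of $I$ that is open in $\R_+\cup\{0\}$ then $N^{-1}[O]$ is a nonempty open subset of $S_I$, so $S_I$ has nonempty interior.

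Items (2) and the remaining part of (3) are then immediate. For (2): for $r\in J$ and $x\in B_{\XX^*}$ the point of $\X$ whose $B_{\XX^*}$-coordinate is $x$ and whose $\R^2$-coordinate is $r\alpha$ has $N$-value $r$ (since $|\alpha|=1$), hence lies in $S_I$ exactly when $r\in I$; thus $S_I\cap(B_{\XX^*}\times l(\alpha,J))=B_{\XX^*}\times\{r\alpha:r\in I\cap J\}$, which is nonempty if and only if $I\cap J\ne\emptyset$, using $B_{\XX^*}\ne\emptyset$. For the connectedness of $U(l(\alpha,J),\epsilon)$: each $p$ in this set lies within $\epsilon$ of some $r_p\alpha\in l(\alpha,J)$, so the connected ball $B(r_p\alpha,\epsilon)$ contains $p$, is contained in $U(l(\alpha,J),\epsilon)$, and meets the connected set $l(\alpha,J)$; since $U(l(\alpha,J),\epsilon)$ is the union of $l(\alpha,J)$ with all these balls, Lemma \ref{connected-union} (1) applies, and then $B_{\XX^*}\times U(l(\alpha,J),\epsilon)$ is again a product of connected spaces.

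I do not expect a genuine obstacle; everything here is soft point-set topology. The only place demanding a little care is (1): treating the case $0\in I$ separately from $0\notin I$ in the union-of-rays argument, and keeping straight the difference between ``open / closed / nonempty interior in $\R_+\cup\{0\}$'' and the corresponding notions in $\R$ (for instance, $[0,b)$ is open in $\R_+\cup\{0\}$). An alternative for the connectedness of $A_I$, should one wish to avoid the case split, is to note that $N$ restricted to $A_I$ is an open surjection onto the connected set $I$ with connected fibres (circles, together with the singleton $\{0\}$), which forces the domain to be connected; but the route through Lemma \ref{connected-union} is the most economical given what is already at hand.
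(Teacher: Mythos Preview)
Your proof is correct and follows essentially the same approach as the paper: both exploit the product decomposition $S_I=B_{\XX^*}\times A_I$ with $A_I=\{(y,z):|(y,z)|\in I\}$, and both handle $U(l(\alpha,J),\epsilon)$ as a union of balls meeting the connected ray $l(\alpha,J)$ via Lemma~\ref{connected-union}~(1). Your version is simply more thorough---the paper asserts the connectedness and topological properties of $A_I$ without further comment, whereas you supply the ray decomposition with the $0\in I$ case split and the explicit preimage argument through the radius map $N$.
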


\begin{proof}
    The point (1) follows from the fact that $S_I = B_{\XX^*} \times \{(y,z) \in \R^2: |(y,z)| \in I\}$
     and this set is connected and open (resp. closed, resp. has nonempty interior) if and only if $I$ is.
     Similarly (2) follows from the fact that $|\alpha x|=x$ if $x\in \R_+$ and $\alpha$
     is in the unit sphere of $\R^2$.

    For (3) note that $l(\alpha, J)$ is homeomorphic to the interval $J$ and so connected.
    As 
    $$U(l(\alpha, J), \epsilon)=\bigcup\{B(v,\epsilon): v\in l(\alpha, J)\}$$
    and $l(\alpha, J)$ is connected and $v\in B(v,\epsilon)$, so Lemma \ref{connected-union} (1) applies
    and yields the connectedness of $U(l(\alpha, J), \epsilon)$. The remaining parts
    of (3) follow from the fact that products of connected spaces are connected.
   
\end{proof}

\begin{lemma} \label{lemma-connected-separation2}
Suppose that $F, G\subseteq \X^*$ are nonempty closed and disjoint and that
for every Cantor system $(f_s)_{s\in  s\in \{0,1\}^{<\omega}} \subseteq C_1(\X)$
there is $\sigma\in \{0, 1\}^\N$ such that $(\beta f_\sigma)|F=0$. Then there are 
  \begin{itemize}
  \item $(m_n)_{n\in \N}\subseteq\N$, 
  \item $(a_n)_{n\in \N}$, $(b_n)_{n\in \N}$ with $0<a_0\leq a_n<b_n<a_{n+1}$ for
  all $n\in \N$ and $\lim_{n\rightarrow \infty}a_n = \infty$,
  \item  $(\alpha_{n, m})_{m\leq m_n, n\in\N}$ in the unit sphere of $\R^2$,
  \item $0<\varepsilon_n<(b_n-a_n)$ for all $n\in \N$,
 \item open sets
$U_k', U_k'',  U_{n, m}'', U_k''', U_{n, m}'''\subseteq \X$ for $k, n\in\N$ and $m\leq m_n$ 
\end{itemize}
such that
\begin{enumerate}[(a)]
\item $U_k'=\bigcup_{n\geq k}S_{(a_{n}, b_{n})}$,
\item $U_{n, m}''=C_{n, m}\times B_{n, m}\subseteq S_{(a_n, b_{n+1})}$,
where $C_{n, m}$ is a nonempty open convex subset of $B_{\XX^*}$
and $B_{n, m}\subseteq\R^2$ is a open ball,
\item $U_k''=\bigcup_{n\geq k}\bigcup_{m\leq m_{n}} U_{n, m}''$,
\item $U_{n, m}'''=B_{\XX^*}\times U(l(\alpha_{n, m}, [b_n, a_{n+1}]), \epsilon_n)$,
\item $U_k'''=\bigcup_{n\geq k}\bigcup_{m\leq m_{n}} U_{n, m}'''$,
\item $B_{n, m}\cap l(\alpha_{n, m}, [b_n, a_{n+1}])\not=\emptyset$ for each $n\in \N$ and $m\leq m_n$,
\item $G\subseteq \overline{U_k'}\cup \overline{U_k''}$ for each $k\in \N$, where the closure is taken in $\beta X$
\item $F\cap (\overline{U_k'}\cup\overline{U_k''}\cup\overline{U_k'''})=\emptyset$ for each $k\in \N$, 
where the closures are taken in $\beta X$
\end{enumerate}
\end{lemma}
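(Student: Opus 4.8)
The plan is to construct the whole configuration simultaneously by a recursion on $n\in\N$, at each stage producing the annulus parameters $a_n<b_n<a_{n+1}$, the directions $\alpha_{n,m}$ ($m\le m_n$), the convex blocks $C_{n,m}$ and balls $B_{n,m}$, and the error $\varepsilon_n$. The sets $U_k',U_k'',U_k'''$ are then forced on us by the formulas in (a), (c), (e), so the real content is (i) making $(a_n),(b_n)$ unbounded and interleaved, (ii) ensuring the convexity/ball shape in (b) and the nonempty-intersection condition (f), and (iii) arranging the three separation conditions (g), (h) together with the ``$F$ avoids a $\beta X$-closure'' phenomenon. I would first record the geometric backbone: by Lemma \ref{remark-level-sets}, each $S_I$ for an interval $I$ is connected, open when $I$ is open, and the sets $l(\alpha,J)$, $U(l(\alpha,J),\varepsilon)$, and their products with $B_{\XX^*}$ are connected; this is what makes the later Proposition \ref{main-connected} go through, but for the present lemma it is only the \emph{existence} of the configuration that matters.

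The heart of the construction is an alternation between $F$ and $G$ using the Cantor-system hypothesis. The idea: build a Cantor system $(f_s)_{s\in\{0,1\}^{<\omega}}$ of functions in $C_1(\X)$ whose supports live in successive shells $S_{[a_n,a_{n+1}]}$, designed so that controlling which branch $\sigma$ one follows corresponds to a choice of which shells (and which convex pieces inside them) to include. Since $F$ and $G$ are disjoint closed subsets of the $F$-space $\X^*$, their closures in $\X^*$ have disjoint closed neighborhoods; using realcompactness/almost-$P$ structure of $\X^*$ (Lemma \ref{remainder-P-space}) one gets, for each shell, a continuous $[0,1]$-valued function on $\X$ whose $\beta X$-extension vanishes on $F$ near that shell and is bounded below on $G$ near it, or vice versa. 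Feeding a suitably assembled Cantor system into the hypothesis yields a branch $\sigma$ with $(\beta f_\sigma)|F=0$; the branch then selects, shell by shell, the convex sets $C_{n,m}$ and balls $B_{n,m}$ (obtained from local convexity and local compactness of $B_{\XX^*}$ in the weak$^*$ topology: every point has a neighborhood basis of relatively open convex sets) so that $G\subseteq\overline{U_k'}\cup\overline{U_k''}$ while $F$ stays out of $\overline{U_k'}\cup\overline{U_k''}\cup\overline{U_k'''}$. The conditions $U_{n,m}''\subseteq S_{(a_n,b_{n+1})}$ and (f) are met by choosing $B_{n,m}$ a small ball centered on a point of $l(\alpha_{n,m},[b_n,a_{n+1}])$ and shrinking $\varepsilon_n<b_n-a_n$ accordingly; the directions $\alpha_{n,m}$ are chosen to point the ``corridors'' $U_{n,m}'''$ through the region between consecutive $G$-blocks without ever approaching $F$.

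More concretely, at stage $n$ I would: (1) from the part of $G$ lying in the already-fixed shell $S_{[a_n,a_{n+1}]}$, cover its trace by finitely many basic open convex$\times$ball sets $C_{n,m}\times B_{n,m}$, $m\le m_n$, all inside $S_{(a_n,b_{n+1})}$ — possible because $\X=B_{\XX^*}\times\R^2$ is locally ``convex$\times$ball'' and $G$ is closed in $\X^*$ so its shell-trace is covered by finitely many such sets after choosing $a_{n+1}$ large enough (compactness of $S_{[a_n,a_{n+1}]}$ is used here, together with the fact that the $\beta X$-closure of a union of such sets over all $n\ge k$ still misses $F$ by the Cantor-system branch); (2) pick $\alpha_{n,m}$ and $\varepsilon_n$ so that the corridor $U_{n,m}'''=B_{\XX^*}\times U(l(\alpha_{n,m},[b_n,a_{n+1}]),\varepsilon_n)$ meets $B_{n,m}$ (giving (f)) and its $\beta X$-closure misses $F$; (3) choose $b_{n+1}>a_{n+1}$ and then $a_{n+2}$ so large that the next shell is disjoint from everything so far and $a_n\to\infty$. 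The interplay of $F$ and $G$ across stages is handled by a single application of the hypothesis to a globally-assembled Cantor system (using Lemma \ref{abundant-equi}-style bookkeeping to recurse), rather than stage-by-stage, so that the resulting $\sigma$ simultaneously pushes $F$ away from all three families.

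The main obstacle I anticipate is item (g): getting \emph{all} of $G$ into $\overline{U_k'}\cup\overline{U_k''}$ for \emph{every} $k$, while keeping $F$ out. The first part is a covering statement about the $\beta X$-closure of the union of the annuli $S_{(a_n,b_n)}$ together with the convex blocks; one must argue that a point of $G\subseteq\X^*$ either lies in the $\beta X$-closure of the ``thick'' annular region $\bigcup_{n\ge k}S_{(a_n,b_n)}$ or is trapped in one of the finitely-many-per-shell convex blocks, and this requires choosing the blocks to cover the ``radially thin'' leftover region $\bigcup_n S_{[b_n,a_{n+1}]}$ wherever $G$ reaches into it. The second part — $F$ avoiding $\overline{U_k'}\cup\overline{U_k''}\cup\overline{U_k'''}$ — is exactly where the Cantor-system hypothesis is indispensable: one encodes the union $U_k'\cup U_k''\cup U_k'''$ (or rather a Cantor system whose branches generate precisely these unions) so that the guaranteed branch $\sigma$ with $(\beta f_\sigma)|F=0$ yields a family all of whose $\beta X$-closures meet $F$ trivially. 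Threading these two requirements through the same recursion, and verifying that the forced formulas (a), (c), (e) for $U_k',U_k'',U_k'''$ are consistent with (b), (d), (f)–(h), is the delicate bookkeeping; the geometric facts from Lemma \ref{remark-level-sets} and local convexity of $B_{\XX^*}$ are the tools, but the orchestration is the work.
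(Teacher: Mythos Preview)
Your outline has the right geometric ingredients but contains a real gap in how $G$ is handled, and a related misconception about the role of the Cantor-system hypothesis. You write of ``the part of $G$ lying in the already-fixed shell $S_{[a_n,a_{n+1}]}$'' and propose to cover its trace by convex$\times$ball sets; but $G\subseteq\X^*$ is disjoint from $\X$, so $G$ meets no shell and there is nothing compact in $\X$ to cover at stage $n$. What the paper does instead is invoke normality of $\beta\X$ once at the outset: fix open $U,W\subseteq\beta\X$ with $G\subseteq U\subseteq\overline{U}\subseteq W$ and $\overline{W}\cap F=\emptyset$. For each $n$ the set $\overline{U\cap S_{[b_n,a_{n+1}]}}$ is then a genuine compact subset of $W\cap S_{(a_n,b_{n+1})}\subseteq\X$, and \emph{this} is what gets covered by finitely many $C_{n,m}\times B_{n,m}$. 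Condition (g) follows from $G\subseteq\overline{U\cap\X}$ (Lemma~\ref{closure-remainder}) together with a decomposition of $U\cap S_{(a_k,\infty)}$ into the annular part and the gap part; the $U_k''$ portion of (h) is immediate from $\overline{U_{n,m}''}\subseteq\overline{W}$---no Cantor system is used for the $U_k''$ piece at all. Your appeal to the $F$-space/almost-$P$ structure of $\X^*$ here is misplaced; plain normality of the compact space $\beta\X$ is the tool.

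Second, the paper applies the Cantor-system hypothesis \emph{twice}, to two separate systems, not once to a ``globally assembled'' one. The first system consists of radial bump functions supported on annuli, and its branch selects the sequence $(a_n,b_n)$ with $F\cap\overline{U_k'}=\emptyset$. Only after the $a_n,b_n$ are fixed can corridor directions even be written down; the second Cantor system is then built from, for each $n$, a family of $2^n$ candidate directions $\alpha_{n,m,s}$ ($s\in\{0,1\}^n$, $m\le m_n$), each chosen so that $l(\alpha_{n,m,s},[b_n,a_{n+1}])$ meets $B_{n,m}$, and its branch selects $\alpha_{n,m}=\alpha_{n,m,\sigma|n}$ giving $F\cap\overline{U_k'''}=\emptyset$. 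Your single-system proposal runs into the dependency that the corridors require the endpoints $b_n,a_{n+1}$ before they exist; merging both choices into one binary tree would need a concrete encoding you have not supplied.
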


\begin{proof} 

The main idea is that we choose the sets ${U_k''}$'s so that (g) holds, while
the sets $U_k'$'s and $U_k'''$'s take care of the the connectedness
of each union $\overline{U_k'}\cup\overline{U_k''}\cup\overline{U_k'''}$ in the proof of Lemma \ref{big-connected}.
Perhaps to understand the role of these sets fully the reader could look at
the rest of this section before studying the details of this proof.

Use the normality of $\beta\X$ to find open sets $U, W\subseteq \beta \X$ such that
\begin{enumerate}
\item $G\subseteq U\subseteq \overline{U}\subseteq W$,
\item $\overline{W}\cap F=\emptyset$,
\end{enumerate}
 where the closure is taken in $\beta\X$.
 Note that $G\subseteq U\setminus S_{[0,a]}$ for every $a>0$ and
 so such sets $U\setminus S_{[0,a]}$ are open and not empty and hence
 intersect with $X$ by the density of $X$ in $\beta X$. It follows that there is
 $A\subseteq \R_+$ which is unbounded in $\R_+$ such that 
 \begin{enumerate}[(3)]
\item $U\cap S_{\{a\}}\not=\emptyset$ for all $a\in A$.
\end{enumerate}

\noindent{\bf Step 1. Constructing the sets $U_k'$ for $k\in \N$.} 
First we construct $(a_n)_{n\in \N}$,  $(b_n)_{n\in \N}$ and we put
$U_k'=\bigcup_{n\geq k}S_{(a_{n}, b_{n})}$ as in (a)
 and we will prove the part of (h) that says that $F\cap \overline{\bigcup_{n\geq k}S_{(a_{n}, b_{n})}}=\emptyset$ 
 for $k\in \N$. 
 
 For $n>1$ consider $g_n\in C_1(\R_+)$ such that 
 $$g_n^{-1}[\{1\}]=[n, n+1], \ 
 g_n|(\R_+\setminus[n-1, n+2])=0$$
 for every $n\in\N$. So $\{g_{4n}: n>0\}$ have  pairwise disjoint supports.
 Relabel  $\{g_{4n}: n>0\}$ as $\{g_s: s\in \{0,1\}^{<\omega}\}$ and let $I_s$ be
 the closed interval such that $I_s=g_s^{-1}[\{1\}]$.  Define $f_s\in C_1(X)$ by
 $$f_s(x, y, z)= g_s(|(y, z)|)$$
 Note that $f_s^{-1}[\{1\}]=S_{I_s}$. Moreover 
 $(f_s)_{s\in \{0,1\}^{<\omega}}$ is a Cantor system as in Definition \ref{cantor-system}.
 Items (1) and (2) of Definition \ref{cantor-system} are clear, to see (3) note that
 the sets $S_{[0,a)}$ for $a>0$ form an open cover of $\X$ and
 they intersect only finitely many supports of $f_s$'s.
 
 Hence, $(f_s)_{s\in \{0,1\}^{<\omega}}$ is a Cantor system.
 So, by the hypothesis on $F$ we obtain $\sigma\in \{0,1\}^\N$ such that
 $(\beta f_\sigma)|F=0$, where $f_\sigma$ is as in Definition \ref{cantor-system}. 
  In particular, $(\beta f_\sigma)^{-1}[\{1\}]$ is a closed set
 disjoint from $F$ and contains all sets $f_{\sigma|n}^{-1}[\{1\}]$ for  all $n\in \N$
 as $(\beta f_\sigma)|X=\sum_{n\in \N} f_{\sigma|n}$. As $f_{s}^{-1}[\{1\}]=S_{I_s}$, 
 we obtain $F\cap \overline{\bigcup_{n\in \N}S_{I_{\sigma|n}}}=\emptyset$.

Labeling the endpoints of $I_{\sigma|n}$'s by appropriate $a_n$ and $b_n$ so that
 $0<a_0\leq a_n<b_n<a_{n+1}$ for
  all $n\in \N$ and $\lim_{n\rightarrow \infty}a_n = \infty$ we obtain 
  $$F\cap \overline{\bigcup_{n\in \N}S_{(a_n, b_n)}}=\emptyset,$$
 where the closure is taken in $\beta\X$. Thus
 we have  (a)
 and  the part of (h) that says that $F\cap \overline{U_k'}=\emptyset$ 
 for $k\in \N$. 
 Moreover we may assume that 
 \begin{enumerate}[(4)]
 \item $(b_n, a_{n+1})\cap A\not=\emptyset$ for every $n\in \N$,
 \end{enumerate}
where $A\subseteq \R_+$ is as in (3). This can be arranged by thinning out the sequence
of $(a_n, b_n)_{n\in \N}$.

\vskip 6pt
\noindent{\bf Step 2. Constructing the sets $U_k''$ for $k\in\N$.} 
In this step we will construct  $(m_n)_{n\in\N}$
and  open convex subsets $C_{n, m}$ of $B_{\XX^*}$
and  balls $B_{n, m}\subseteq\R^2$ for $m\leq m_n$ and 
$n\in \N$ and hence $U_{n, m}$'s and $U_k''$'s (as defined in (b) and (c)) for $n, k\in\N$
$m\leq m_n$
 and we will prove (g) and the part of (h) that says that $F\cap \overline{U_k''} =\emptyset$
 for $k\in\N$. 
 
Let $U\subseteq \beta\X$ be as in (1)-(2). Fix $n\in\N$.
 For each element $u$ of $\overline{U \cap S_{[b_n, a_{n+1}]}} \subseteq W \cap S_{(a_n,b_{n+1})}$ 
  we can pick a basic neighborhood of the form $C_u \times B_u$, 
  where $C_u \subseteq B_{\XX^*}$ is open and convex and $B_u$
   is an open ball in $\R^2$ (recall that the weak$^*$ topology is locally convex), 
   such that 
   $$\overline{C_u \times B_u} \subseteq W \cap S_{(a_n, b_{n+1})}.\leqno (5)$$
   As $\overline{U \cap S_{[b_n, a_{n+1}]}}$ is compact, we can take a finite subcover of
    its cover $\{C_u \times B_u: u \in \overline{U \cap S_{[b_n, a_{n+1}]}}\}$ and
     enumerate it as $\{U_{n, 0}'',\dots, U_{n, m_n}''\}$. So, (b) of the lemma is satisfied.
   Here (3)-(4) is used so that we have  $U \cap S_{[b_n, a_{n+1}]}\not=\emptyset$  which guarantees
   that the subcover is not empty, i.e., $m_n\geq 0$ (this is used in the proof of Lemma \ref{big-connected}).
     
     By Lemma \ref{closure-remainder} for every $k\in\N$ we have
     $$G\subseteq \overline {U\cap X}=\overline{U\cap S_{[0, a_k]}}\cup \overline{U\cap S_{(a_k, \infty)}}.$$
     Since $G\subseteq  \X^*$ and $\overline{ S_{[0, a_k]}}\cap X^*=\emptyset$ as  $S_{[0, a_k]}\subseteq X$
     is compact, we conclude that  for every $k\in\N$ we have
     $$G\subseteq \overline{U\cap S_{(a_k, \infty)}}= \overline{U_k'}\cup 
     \overline {\bigcup_{n\geq k}(U\cap S_{[b_n, a_{n+1}]})}.$$
    As $\{U_{n, 0}'',\dots, U_{n, m_n}''\}$ is a cover 
     of $\overline{U \cap S_{[b_n, a_{n+1}]}}$ we conclude (g).
     
     Also note that $\overline{\bigcup_{n \in \N} U_n''} \subseteq \overline{W}$ by (5),  so
     (1) implies the part of (h) which says that $F\cap \overline{U_k''} =\emptyset$.
    
\vskip 6pt
\noindent{\bf Step 3. Constructing the sets $U_k'''$ for $k\in\N$.} 
In this step for $m\leq m_n$ and $n\in\N$ we will construct  elements $\alpha_{n, m}$ of the 
unit sphere of $\R^2$  so that (f) is satisfied
and the part of (h) which says that $F\cap \overline{U_k'''} =\emptyset$ holds for
every $k\in\N$, where $U_k'''$ are defined in (e) using (d).

Note that by the construction in Step 2  for each
$n\in \N$ and  $m\leq m_n$ the set  $B_{n, m}$ is an open ball with the center in
$u\in \{(y,z)\in \R^2: |(y,z)|\in [b_n, a_{n+1}]\}$  so
  it must be true that $B_{n, m} \cap \{(y,z) \in \R^2: |(y,z)| \in (b_n, a_{n+1})\}$
   is nonempty and open. So, for each $n\in \N$ we can pick $(y_{n, m, s}, z_{n, m, s})$
    in the above intersection for all $m\leq m_n$ and
   $s\in \{0,1\}^{n}$, such that their arguments $|(y_{n, m, s}, z_{n, m, s  })|^{-1}
    (y_{n, m, s}, z_{n, m, s})$ are all distinct (as a nonempty open subset of $\R^2$
     has uncountably many elements with distinct arguments). Put $\alpha_{n, m, s}
      = |(y_{n, m, s}, z_{n, m, s  })|^{-1} (y_{n, m, s}, z_{n, m, s})$.
   It follows that 
   $$l(\alpha_{n, m, s}, [b_n, a_{n+1}])\cap B_{n, m}\not=\emptyset$$
   for every  $m\leq m_n$ and
   $s\in \{0,1\}^{n}$ and $n\in \N$. 
   
  Now for each $n\in \N$ pick $0<\epsilon_n' < \frac{1}{2} \min \{b_n-a_n, b_{n+1}-a_{n+1}\}$ 
small enough so that the compact disjoint sets $l(\alpha_{n, m, s}, [b_n, a_{n+1}])$ for 
$m\leq m_n, s \in \{0,1\}^n$, are $2\epsilon_n'$-separated and with $\epsilon_n'$ converging  to $0$. 
For $s\in \{0,1\}^{<\omega}$ define  $f_s\in C_1(X)$  by
$$f_s((x,y,z)) = 1 - \min \left(1,(\epsilon_n')^{-1} \operatorname{dist}((y,z),\bigcup_{m=0}^{m_n}
 l(\alpha_{n, m, s}, [b_n, a_{n+1}])) \right).$$

We note that
\begin{enumerate}
    \item[(6)] $\supp(f_s)= B_{\XX^*} \times \bigcup_{m=0}^{m_n}
     \overline{U(l(\alpha_{n, m, s}, [b_n, a_{n+1}]),\epsilon_n')}\subseteq
      S_{[b_n-\epsilon_n', a_{n+1}+\epsilon_n']}$,
    \item[(7)] $f_s^{-1}[[1/2, 1]]\supseteq B_{\XX^*} \times \bigcup_{m=0}^{m_n}
     \overline{U(l(\alpha_{n, m, s}, [b_n, a_{n+1}]),\epsilon_n'/2)}$.
   
    \end{enumerate}
  So, supports of $f_s$'s are compact. They are also pairwise disjoint, which  follows from the choice of
  $\epsilon_n'$ and (6). Moreover, only finitely many $f_s$'s have supports intersecting
   $S_{[0, b_n-\epsilon_n')}$. However, such sets form an open cover of $X$ because $b_n \rightarrow \infty$ and
   $\epsilon_n' \rightarrow 0$, so
   each point of $X$ has an open neighborhood which intersects the supports of only
   finitely $f_s$'s. 
   
   Consequently, $(f_s)_{s\in \{0,1\}^{<\omega}}$ is a Cantor system as in
   Definition \ref{cantor-system}. It follows from the hypothesis of the
   lemma that there is $\sigma\in\{0,1\}^\N$ such that 
   $f_\sigma|F=0$.  
    In particular $(\beta f_\sigma)^{-1}[[1/2, 1]]\subseteq\beta\X$ is a closed set
 disjoint from $F$ and containing all sets $f_{\sigma|n}^{-1}[[1/2, 1]]$ for  all $n\in \N$
 as $(\beta f_\sigma)|X=\sum_{n\in \N} f_{\sigma|n}$. As 
 $f_s^{-1}[[1/2, 1]]$ satisfies (7) above 
 we obtain 
 that
  $$F\cap \overline{\bigcup_{n\in \N}\bigcup_{m\leq m_n}
  B_{\XX^*} \times U(l(\alpha_{n, m, \sigma|n}, [b_n, a_{n+1}]),\epsilon_n'/2)}=\emptyset.$$
 
Putting $\alpha_{n, m}=\alpha_{n, m, {\sigma|n}}$ 
and $\epsilon_n=\epsilon_n'/2$  we obtain 
  $F\cap \overline{U_k'''}=\emptyset$ for each $k\in \N$, where
 the closure is taken in $\beta\X$.   
\end{proof}

\begin{lemma} \label{big-connected}
Suppose that $F\subseteq \X^*$ is closed and such that
for every Cantor system $(f_s)_{s\in \{0,1\}^{<\omega}}\subseteq C_1(\X)$
there is $\sigma\in \{0, 1\}^\N$ such that $(\beta f_\sigma)|F=0$. 
Then $X^*\setminus F$ is connected. 
\end{lemma}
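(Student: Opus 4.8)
\textbf{Proof proposal for Lemma \ref{big-connected}.}

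The plan is to reduce connectedness of $\X^*\setminus F$ to a statement about finitely many points and then exploit the geometric scaffolding produced by Lemma \ref{lemma-connected-separation2}. By Lemma \ref{connected-union} (4) it suffices to show that any two points $p,q\in\X^*\setminus F$ lie in a common connected subset of $\X^*\setminus F$. Set $G=\{p,q\}$ (or, more safely, pick disjoint closed neighborhoods and let $G$ be their union, so that $F,G$ are closed and disjoint as required); then apply Lemma \ref{lemma-connected-separation2} to the pair $F,G$ to obtain the sequences $(a_n),(b_n)$, the directions $\alpha_{n,m}$, the numbers $\varepsilon_n$, and the open sets $U_k',U_k'',U_k''',U_{n,m}'',U_{n,m}'''$ satisfying (a)--(h). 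The hypothesis on $F$ is exactly what makes Lemma \ref{lemma-connected-separation2} applicable.

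The core of the argument is to show that for each $k\in\N$ the set $Z_k:=\overline{U_k'}\cup\overline{U_k''}\cup\overline{U_k'''}$ (closures in $\beta\X$, intersected with $\X^*$) is connected. First I would argue that each piece is connected in $\beta\X$: $U_k'=\bigcup_{n\ge k}S_{(a_n,b_n)}$ is a union of the connected sets $S_{(a_n,b_n)}$ (Lemma \ref{remark-level-sets} (1)); consecutive ``annular'' sets $S_{(a_n,b_n)}$ and $S_{(a_{n+1},b_{n+1})}$ get linked through $U_{n,m}'''=B_{\XX^*}\times U(l(\alpha_{n,m},[b_n,a_{n+1}]),\varepsilon_n)$, which is connected by Lemma \ref{remark-level-sets} (3) and meets both $S_{(a_n,b_n)}$ and $S_{(a_{n+1},b_{n+1})}$ because the line segment $l(\alpha_{n,m},[b_n,a_{n+1}])$ reaches radii down to $b_n$ and up to $a_{n+1}$ (Lemma \ref{remark-level-sets} (2)); and each $U_{n,m}''=C_{n,m}\times B_{n,m}$ is connected and, by (f), $B_{n,m}\cap l(\alpha_{n,m},[b_n,a_{n+1}])\neq\emptyset$, so $U_{n,m}''$ meets $U_{n,m}'''$. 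Invoking Lemma \ref{connected-union} (1) repeatedly (with the chain of annuli and connecting tubes as the ``backbone'' $Z$ and the $U_{n,m}''$'s attached to it) gives that $U_k'\cup U_k''\cup U_k'''$ is connected in $\X$; its closure $Z_k$ in $\X^*$ (equivalently, closure in $\beta\X$ intersected with $\X^*$) is then connected by Lemma \ref{connected-union} (2), since a connected dense subset forces connectedness. Next, by (h), $Z_k\cap F=\emptyset$, so $Z_k\subseteq\X^*\setminus F$. By (g), $G\subseteq\overline{U_k'}\cup\overline{U_k''}\subseteq Z_k$, so both $p$ and $q$ belong to every $Z_k$. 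Finally the $Z_k$ form a decreasing sequence of compact connected subsets of $\X^*$ (they are closed in the compact space $\beta\X$ and contained in $\X^*$), so we can either take $\bigcap_k Z_k$, which is connected and compact by Lemma \ref{connected-union} (3) and still contains $p,q$ and avoids $F$, or simply note $Z_0$ already works; in either case $p$ and $q$ lie in a connected subset of $\X^*\setminus F$, and Lemma \ref{connected-union} (4) finishes the proof.

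I expect the main obstacle to be the bookkeeping in the connectedness chain inside Step-2/Step-3 style sets: one must verify carefully that the ``backbone'' made of the annuli $S_{(a_n,b_n)}$ and the radial tubes $U_{n,m}'''$ is genuinely connected before hanging the boxes $U_{n,m}''$ on it, and that the closures in $\beta\X$ do not accidentally pull in points of $X$ that would break the argument — but Lemma \ref{closure-remainder} and the fact that each bounded $S_{[0,a]}$ is compact (hence has empty intersection with $\X^*$) handle this, exactly as in the proof of Lemma \ref{lemma-connected-separation2}. A secondary subtlety is ensuring $m_n\ge 0$ so that at least one tube $U_{n,m}'''$ exists at each level; this is precisely guaranteed by conditions (3)--(4) of that proof, which were arranged for this purpose. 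Everything else is an application of the four parts of Lemma \ref{connected-union} together with the connectedness facts of Lemma \ref{remark-level-sets}.
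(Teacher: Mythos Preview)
Your overall strategy matches the paper's: reduce to two points via Lemma \ref{connected-union}(4), apply Lemma \ref{lemma-connected-separation2} with $G=\{p,q\}$, argue that $U_k'\cup U_k''\cup U_k'''$ is connected by chaining annuli, radial tubes and boxes exactly as the paper does, and pass to an intersection over $k$.

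There is, however, a genuine gap at the passage from $\X$ to $\X^*$. You define $Z_k$ as the closure in $\beta\X$ \emph{intersected with} $\X^*$, and then invoke Lemma \ref{connected-union}(2) to conclude that $Z_k$ is connected. But Lemma \ref{connected-union}(2) requires the connected set to be a dense \emph{subset} of the target, and $U_k'\cup U_k''\cup U_k'''\subseteq \X$ is disjoint from $Z_k\subseteq \X^*$; so (2) simply does not apply to $Z_k$. Consequently your $Z_k$ are not known to be connected, Lemma \ref{connected-union}(3) cannot be applied to $\bigcap_k Z_k$, and the remark ``$Z_0$ already works'' is false for the same reason (and if you meant the uncut closure in $\beta\X$, that set \emph{is} connected but is not contained in $\X^*$, so it is not a subset of $\X^*\setminus F$).

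The fix, which is precisely what the paper does, is to keep the closures in $\beta\X$ \emph{without} cutting down to $\X^*$: set $W_k=\overline{U_k'\cup U_k''\cup U_k'''}$ in $\beta\X$. Now (2) applies and each $W_k$ is connected; the $W_k$ are compact and decreasing, so $H=\bigcap_k W_k$ is connected by (3). The extra step you are missing is to show directly that $H\subseteq\X^*$: since $S_{[0,a_k)}$ is open in $\beta\X$ (it is open in $\X$, which is open in $\beta\X$ by local compactness) and disjoint from $U_k'\cup U_k''\cup U_k'''$, no point of $S_{[0,a_k)}$ lies in $W_k$; as $a_k\to\infty$ gives $\X=\bigcup_k S_{[0,a_k)}$, one gets $H\cap\X=\emptyset$. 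Then $G\subseteq H$ by (g) and $H\cap F=\emptyset$ by (h). In short, the intersection over $k$ is not optional bookkeeping: it is the device that forces the connected set down into $\X^*$, and no single $Z_k$ (in either reading) does the job.
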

\begin{proof}
Let $y_0, y_1$ be any two distinct points in $\X^* \setminus F$. 
By Lemma \ref{connected-union} (4) it is enough to find a connected
subspace $H$ of $\X^* \setminus F$ which contains $y_0$ and $y_1$. Let $G=\{y_0, y_1\}$.
Note that $F$ and $G$ satisfy the hypothesis of Lemma \ref{lemma-connected-separation2}.
So by this lemma there exist all the objects claimed by it.  
Work in $\beta X$ and consider
$$H=\bigcap_{k\in \N}(\overline{U_k'\cup U_k''\cup U_k'''}).$$
 First note that 
$S_{[0,a_k)}\cap(U_k'\cup U_k''\cup U_k''')=\emptyset$ by (a), (b) 
and (d) of Lemma \ref{lemma-connected-separation2}.
As $X=\bigcup_{k\in \N} S_{[0,a_k)}$ since $\lim_{k\rightarrow \infty}a_k=\infty$, we 
get that $H\subseteq X^*$. 
By Lemma \ref{lemma-connected-separation2} (g), we conclude that $G\subseteq H$.

So we are left with proving  that $H$ is a connected subset of $X^*$.
For this, by Lemma \ref{connected-union} (3),
it is enough to show that each $\overline{U_k'\cup U_k''\cup U_k'''}$ is connected and for this,
by Lemma \ref{connected-union} (2) it is enough to show
that $U_k'\cup U_k''\cup U_k'''$ are connected for every $k\in \N$. The rest
of the proof is devoted to this task.

Fix $k\in \N$. First note that the subspace $U_k'\cup U_k'''$ is connected. For this
use its definition in Lemma \ref{lemma-connected-separation2}, note that for $n\geq k$ each
$S_{(a_n, b_n)}$  is connected by Lemma \ref{remark-level-sets} (1)
and each $U_{n, m}'''=B_{\XX^*}\times U(l(\alpha_{n, m}, [b_n, a_{n+1}]), \epsilon_n)$
 is connected by Lemma \ref{remark-level-sets} (3).
Moreover, 
$$S_{(a_n, b_n)}\cap U_{n, m}'''\not=\emptyset\not=S_{(a_{n+1}, b_{n+1})}\cap U_{n, m}'''$$ for each $n\geq k$
by Lemma \ref{remark-level-sets} (2) since 
$$(a_n,b_n)\cap (b_n-\epsilon_n, a_{n+1}+\epsilon_n)\not=\emptyset\not=
(a_{n+1},b_{n+1})\cap (b_n-\epsilon_n, a_{n+1}+\epsilon_n)$$
and $B_{\XX^*}\times l(\alpha_{n, m}, (b_n-\epsilon_n, a_{n+1}+\epsilon_n)\subseteq U_{n,m}'''$.
So using Lemma \ref{connected-union} (1) it is easy to prove by induction that
 finite unions 
 $$S_{(a_k, b_k)}\cup \bigcup_{m\leq m_n}U_{k, m}'''\cup\dots \cup S_{(a_n, b_n)}\cup \bigcup_{m\leq m_n}U_{n, m}'''$$ 
 are connected,
and so is the entire union $U_k'\cup U_k'''$ by Lemma \ref{connected-union} (4).
 
 By the form of the sets
 $U_{n,m}''$ in Lemma \ref{lemma-connected-separation2} (b) they are connected for all $m\leq m_n$ and $n\in \N$.
  By Lemma \ref{lemma-connected-separation2} (d) and (f),
 $U_{n, m}''\cap U_{n, m}'''\not=\emptyset$ for each $n\in\N$ and $m\leq m_n$, and
 so $U_{n, m}''\cap (U_k'\cup U_k''') \neq \emptyset$ for each $n\geq k$ and $m\leq m_n$. 
 Hence, we can apply Lemma \ref{connected-union} (1)
to conclude that $U_k'\cup U_k''\cup U_k'''$  is connected as required.

\end{proof}

\begin{lemma} \label{lemma-abundant-connected}
Suppose that $\A\subseteq C(\X^*)$ is abundant. Then $\nabla\A\setminus F'$
 is connected for every finite $F' \subseteq \nabla \A$.
\end{lemma}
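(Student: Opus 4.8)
The plan is to reduce the statement to Lemma \ref{big-connected} by pulling the finite set $F'$ back along the canonical evaluation map $\Pi_\A\colon\X^*\to\nabla\A$. Recall that by Definition \ref{definition-nabla} the space $\nabla\A$ is precisely the image of the compact Hausdorff space $\X^*$ under $\Pi_\A$, so $\Pi_\A$ is a continuous surjection; since $\nabla\A$ is a subspace of $\R^\A$ it is Hausdorff, hence the finite set $F'$ is closed and $F:=\Pi_\A^{-1}[F']$ is a closed subset of $\X^*$. If $F'=\emptyset$ then $F=\emptyset$, the hypothesis of Lemma \ref{big-connected} is vacuously satisfied, and we conclude that $\X^*=\X^*\setminus F$ is connected; so from this point on I would assume $F'=\{y_1,\dots,y_j\}$ is nonempty and put $F_i=\Pi_\A^{-1}(y_i)$.

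The first real step is to verify that the sets $F_i$ meet the hypotheses of Lemma \ref{abundant-zero}. They are closed and, the $y_i$ being distinct, pairwise disjoint; and every $f\in\A$ is constant on each $F_i$, since for $x\in F_i$ the equality $\Pi_\A(x)=y_i$ means by definition that $f(x)=(\Pi_\A(x))(f)=y_i(f)$, a value not depending on the choice of $x\in F_i$. As $\A$ is abundant, Lemma \ref{abundant-zero} then gives that for every Cantor system $(f_s)_{s\in\{0,1\}^{<\omega}}$ of functions in $C_1(\X)$ there is $\sigma\in\{0,1\}^\N$ with $(\beta f_\sigma)|F=0$, where $F=F_1\cup\dots\cup F_j=\Pi_\A^{-1}[F']$.

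The second step is to feed this closed set $F$ into Lemma \ref{big-connected}, which immediately yields that $\X^*\setminus F$ is connected. Finally, since $\X^*\setminus F=\Pi_\A^{-1}[\nabla\A\setminus F']$ and $\Pi_\A$ carries this set onto $\nabla\A\setminus F'$, the space $\nabla\A\setminus F'$ is a continuous image of a connected space and hence connected, as desired. I do not expect a genuine obstacle here: all of the analytic and topological substance has been isolated in Lemmas \ref{abundant-zero} and \ref{big-connected}, and the only point requiring a little care is that the constancy on the fibres $F_i$ must be established for the elements of $\A$ itself (not merely for its closed algebra), which is exactly the elementary computation with evaluation maps above.
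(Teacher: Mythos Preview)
Your proposal is correct and follows essentially the same route as the paper: pull $F'$ back to $F=\Pi_\A^{-1}[F']$, use constancy of elements of $\A$ on the fibres $F_i$ to invoke Lemma \ref{abundant-zero}, feed the resulting $\sigma$ into Lemma \ref{big-connected}, and push forward along $\Pi_\A$. The only differences are cosmetic: you treat the case $F'=\emptyset$ separately (harmless but unnecessary, since Lemma \ref{abundant-zero} and Lemma \ref{big-connected} both apply trivially there), and you spell out the evaluation-map computation showing $f$ is constant on $F_i$ more explicitly than the paper does.
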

\begin{proof} Let $F=(\Pi_\A)^{-1}[F']$. As $\nabla\A\setminus F'$ is a continuous image
(under $\Pi_\A$)  of $\X^*\setminus F$, it is enough to show that $\X^*\setminus F$ is connected.
We would like to use Lemma \ref{big-connected}, and so we need to check if its hypothesis is satisfied. 
Let $(f_s)_{s\in \{0,1\}^{<\omega}}\subseteq C_1(X)$ be a Cantor system. 

Let $F'=\{x_1,\dots, x_j\}$ and $F_i=(\Pi_\A)^{-1}[\{x_i\}]$ for
$1\leq i\leq j$. As $\Pi_\A[F_i]=\{x\}$, clearly every $f\in \A$ is constant on
each $F_i$. Hence, by Lemma \ref{abundant-zero} there is
$f_\sigma$ such that $(\beta f_\sigma)|F=0$. So, Lemma \ref{big-connected} 
implies that $\X^*\setminus F$ is connected, and so is $\nabla\A\setminus F'$.
\end{proof}

We can now finally prove the main proposition of this section.

\begin{proposition}\label{main-connected} Suppose that $\XX$ is a Banach space of density not bigger than $\cc$
and  $X = B_{\XX^*}\times\R^2$, where the dual ball $B_{\XX^*}$ is considered with the weak$^*$
topology. There are pairwise disjoint families
$\{e^{\xi\alpha}: \alpha<\cc\}\subseteq C(X^*)$ for every $\xi<\cc$ such that
whenever a closed subalgebra $\A\subseteq C(X^*)$  satisfies 
$\A\cap\{e^{\xi\alpha}: \alpha<\cc\}\not=\emptyset$ for every $\xi < \cc$, then
$\nabla\A\setminus F$ is connected for every finite set $F\subseteq\nabla\A$.
\end{proposition}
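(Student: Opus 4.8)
The plan is to define, for each $\xi<\cc$, a family of $\cc$ many pairwise disjoint functions in $C(X^*)$ so rich that any closed subalgebra meeting each family must be abundant, and then invoke Lemma~\ref{lemma-abundant-connected}. The indexing by $\xi<\cc$ is there to absorb the fact that abundance is a statement quantified over \emph{all} Cantor systems of functions in $C_1(X)$: there are at most $\cc$ such systems (since $|C_1(X)|\le\cc$ by Lemma~\ref{weight-remainder}, as $X=B_{\XX^*}\times\R^2$ is $\sigma$-compact of weight at most $\cc$), so I would fix an enumeration $\{(f^\xi_s)_{s\in\{0,1\}^{<\omega}}:\xi<\cc\}$ of all Cantor systems in $C_1(X)$.

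For a fixed $\xi<\cc$, consider the associated functions $(\beta f^\xi_\sigma)|X^*$ for $\sigma\in\{0,1\}^\N$; by Lemma~\ref{cantor-disjoint} these are pairwise disjoint in $X^*$, and there are $\cc=2^{\aleph_0}$ many of them (one checks they are distinct, e.g.\ because the $f^\xi_s$ can be taken nonzero, which I would arrange in the enumeration, discarding degenerate systems since a system with some $f_s=0$ causes no loss — or simply noting that distinctness of the $\sigma$'s forces distinctness of the sets $\bigcup_n\coz f_{\sigma|n}$ after passing to a subsystem along an infinite antichain). Set $\{e^{\xi\alpha}:\alpha<\cc\}:=\{(\beta f^\xi_\sigma)|X^*:\sigma\in\{0,1\}^\N\}$, enumerated injectively. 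This family is pairwise disjoint by Lemma~\ref{cantor-disjoint}, as required.

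Now suppose $\A\subseteq C(X^*)$ is a closed subalgebra with $\A\cap\{e^{\xi\alpha}:\alpha<\cc\}\neq\emptyset$ for every $\xi<\cc$. I claim $\A$ is abundant. Let $(f_s)_{s\in\{0,1\}^{<\omega}}$ be any Cantor system in $C_1(X)$; it equals $(f^\xi_s)_s$ for some $\xi<\cc$, and by hypothesis there is $\sigma\in\{0,1\}^\N$ with $(\beta f_\sigma)|X^*=e^{\xi\alpha}\in\A$ for some $\alpha<\cc$. Thus the equivalent single-$\sigma$ formulation of abundance in Lemma~\ref{abundant-equi} is satisfied, so $\A$ is abundant. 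Then Lemma~\ref{lemma-abundant-connected} gives that $\nabla\A\setminus F'$ is connected for every finite $F'\subseteq\nabla\A$, which is the conclusion.

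The main obstacle, and the only point requiring care, is the counting/enumeration step: I must be sure that the collection of \emph{all} Cantor systems of functions in $C_1(X)$ has size at most $\cc$ (so that a single index set of size $\cc$ suffices) and that for each such system the set of resulting functions $(\beta f_\sigma)|X^*$ has size exactly $\cc$ (so the claimed family $\{e^{\xi\alpha}:\alpha<\cc\}$ genuinely has an index running over $\cc$, with pairwise disjointness inherited from Lemma~\ref{cantor-disjoint}). The first is handled by Lemma~\ref{weight-remainder}; for the second, if a Cantor system yields fewer than $\cc$ distinct boundary functions one may pad the family arbitrarily with further pairwise disjoint elements of $C(X^*)$ disjoint from all $(\beta f_\sigma)|X^*$ — their existence follows from Lemma~\ref{disjoint-P} applied in the almost $P$-space $X^*$ — since only the \emph{nonemptiness} of $\A\cap\{e^{\xi\alpha}:\alpha<\cc\}$ at the members of the form $(\beta f^\xi_\sigma)|X^*$ is used. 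Everything else is a direct appeal to the already-established lemmas.
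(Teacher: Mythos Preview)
Your proposal is correct and follows essentially the same route as the paper: enumerate all Cantor systems in $C_1(X)$ (there are at most $\cc$ of them by Lemma~\ref{weight-remainder}), let $\{e^{\xi\alpha}:\alpha<\cc\}$ be the family $\{(\beta f^\xi_\sigma)|X^*:\sigma\in\{0,1\}^\N\}$ (pairwise disjoint by Lemma~\ref{cantor-disjoint}), verify abundance via Lemma~\ref{abundant-equi}, and conclude via Lemma~\ref{lemma-abundant-connected}. Your worry about padding when the family has fewer than $\cc$ distinct elements is unnecessary: the paper simply allows the enumeration to repeat (and in particular to repeat the zero function), which is harmless since zero is disjoint from everything and belongs to every closed subalgebra anyway.
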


\begin{proof}
    We will construct  $\{e^{\xi\alpha}: \alpha<\cc\}$, $\xi < \cc$, in such 
    a way that every closed subalgebra $\A  \subseteq C(X^*)$, which 
    intersects $\{e^{\xi \alpha}: \alpha < \cc\}$ for every $\xi < \cc$, is abundant. 
    The proposition will then follow from Lemma \ref{lemma-abundant-connected}.

      Note that there are $\cc$ many Cantor systems of functions in $C_1(X)$. 
      Indeed, $X$ is $\sigma$-compact and its weight  is at most $\cc$ and so the 
      cardinality of  $C(X)$ is at most $\cc$ by Lemma \ref{weight-remainder}.     
      As every Cantor system of functions in $C_1(\X)$ is a countable set of functions from $C_1(X)$, 
      we get that there are $\cc$ many such systems. 
      Let us enumerate them as $\{\mathcal{S}_\xi: \xi < \cc\}$. 
      For every $\xi < \cc$ we set $\{e^{\xi \alpha}: \alpha <\cc \}$
       to be any enumeration of $\{(\beta f_\sigma)|\X^*: \sigma \in \{0,1\}^\omega\}$, 
       where $\mathcal{S}_\xi$ is the Cantor system $(f_s)_{s \in \{0,1\}^{< \omega}}$. 
       Note that $\{(\beta f_\sigma) |X^*: \sigma \in \{0,1\}^\omega\}$ 
       is pairwise disjoint by Lemma \ref{cantor-disjoint}. 
       It follows that any $\A \subseteq C(X^*)$
        that intersects $\{e^{\xi \alpha}: \alpha < \cc\}$ for every $\xi < \cc$ satisfies
         the condition of Lemma \ref{abundant-equi}, and so is abundant.
\end{proof}

\section{The construction}

Let us recall again that every algebra is considered to be unital and not closed unless specifically stated.

\begin{proposition}\label{proposition-main}
Suppose that $M$ is a compact Hausdorff almost $P$-space of weight $\cc$ with the following property:
\begin{enumerate}
    \item[] For every pairwise disjoint sequence $(f_n)_{n\in \N}\subseteq C_1(M)$ and pairwise 
    disjoint cozero sets $(V_m)_{m\in \N}$ of $M$ satisfying $f_n|V_m=0$ for all $n,m\in \N$
     there is a submorphism $\phi: \wp(\N)\rightarrow C_1(M)$ satisfying
      $\phi(\N)|V_m=0$ and $\phi(\{n\})=f_n$ for every $n, m\in\N$.
\end{enumerate}

Let $\B$ be a closed subalgebra of $C(M)$ such that $\nabla\B$ does not
admit a subspace homeomorphic to $\beta\N$, let 
$\{e^{\xi\alpha}: \alpha<\cc\}\subseteq C_1(M)$ be pairwise disjoint for every $\xi<\cc$.

Then there is  a closed subalgebra $\A$ of $C(M)$ such that:
\begin{enumerate}[(i)]
\item $\nabla\A$ is an almost $P$-space.
\item The Banach space $C(\nabla\A)$ has few$^*$-operators\footnote{Note that
$C(\nabla\A)$ is isometrically, multiplicatively isomorphic to $\A$ by Proposition \ref{proposition-Pi} (2).}.
\item  $\{e^{\xi\alpha}: \alpha<\cc\}\cap\A\not=\emptyset$ for every
$\xi<\cc$.
\item  $\B  \subseteq \A$. 
\end{enumerate}
\end{proposition}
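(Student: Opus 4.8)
The plan is to build $\A$ as $\A=\overline{\bigcup_{\zeta<\cc}\A_\zeta}$, where $(\A_\zeta)_{\zeta\le\cc}$ is an increasing, continuous chain of closed subalgebras of $C(M)$ built by recursion: $\A_0=\B$; $\A_\lambda=\overline{\bigcup_{\zeta<\lambda}\A_\zeta}$ at limits; and $\A_{\zeta+1}=\overline{\A_\zeta\langle h_\zeta\rangle}$, adjoining a single $h_\zeta\in C_1(M)$ at step $\zeta$. Throughout I maintain two invariants: \textbf{(I)} $\nabla\A_\zeta$ contains no copy of $\beta\N$; and \textbf{(II)} there is a set $\mathrm{Prom}_\zeta$ of \emph{promises} --- pairs $(D,E)$ of countable subsets of $M$ --- with $|\mathrm{Prom}_\zeta|\le|\zeta|+\omega<\cc$, such that $\A_{\zeta'}$ does not separate $D$ and $E$ for every $(D,E)\in\mathrm{Prom}_\zeta$ and every $\zeta'\ge\zeta$. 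Invariant (I) holds at $0$ by the hypothesis on $\B$; it passes to successors by Lemma~\ref{betaN-product} and Proposition~\ref{proposition-Pi}~(3) (the closed algebra generated by one bounded function has metrizable Gelfand space, hence no $\beta\N$); at a limit $\lambda$ it holds because no copy of $\beta\N$ can appear in $\nabla\A_\lambda=\varprojlim_{\zeta<\lambda}\nabla\A_\zeta$ --- this last point needs a separate verification, using continuity of the chain. Invariant (II) is preserved at limits by Lemma~\ref{nonseparation-limit} and under norm closures by Lemma~\ref{lemma-separation-equivalence}. Since $\operatorname{cf}(\cc)>\omega$ and the chain is continuous, $\bigcup_{\zeta<\cc}\A_\zeta$ is already closed and equals $\A$; in particular every element, every cozero set, and every pairwise disjoint sequence of $\A$ lies in some $\A_\zeta$ with $\zeta<\cc$.

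At successor steps, under a bookkeeping over the $\cc$ steps (legitimate since $|C(M)|=\cc$ and $\operatorname{cf}(\cc)>\omega$), I carry out three kinds of jobs, each preserving (I) as above and (II). \emph{For (iii):} for each $\xi<\cc$ I put $h_\zeta=e^{\xi\alpha}$ for some $\alpha$ with $e^{\xi\alpha}$ vanishing on the (size-$<\cc$) union of all promise sets of $\mathrm{Prom}_\zeta$, which exists since the $e^{\xi\alpha}$ are pairwise disjoint; then (II) persists by Lemma~\ref{nonseparation-zero} and $\mathrm{Prom}_{\zeta+1}=\mathrm{Prom}_\zeta$. \emph{For (i):} for each member $U$ of a fixed basis of $M$ of size $\le\cc$, Lemma~\ref{disjoint-P} yields an open $W$ with $\overline W\subseteq U$ missing the $<\cc$ promise points (a set of size $<\cc$ meets fewer than $\cc$ of the $\cc$ pairwise disjoint open subsets of $U$), and I put $h_\zeta$ equal to a nonzero function in $C_1(M)$ with $\coz h_\zeta\subseteq W$; at the end $\A$ satisfies the hypothesis of Lemma~\ref{dense-P}, so $\nabla\A$ is an almost $P$-space. \emph{For (ii):} the job processes a \emph{task} $\tau$ (a pairwise disjoint $(f_n)_{n\in\N}\subseteq C_1(M)\setminus\{0\}$ and pairwise disjoint nonempty cozero sets $(V_m)_{m\in\N}$ of $M$ with $f_n|V_m=0$) together with an infinite $A'\subseteq\N$, at a step $\zeta$ at which all $f_n$ and all generators of the $V_m$ already lie in $\A_\zeta$; when $\tau$ is first met I fix, by the hypothesis on $M$, a submorphism $\hat\psi_\tau:\wp(\N)\to C_1(M)$ with $\hat\psi_\tau(\{n\})=f_n$ and $\hat\psi_\tau(\N)|V_m=0$. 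To handle $(\tau,A')$: first apply Lemma~\ref{haydon} with the algebra $\A_\zeta$, the submorphism $\hat\psi_\tau$ and the index set $\mathrm{Prom}_\zeta$ (allowed since $|\mathrm{Prom}_\zeta|<\cc$) --- taking in its proof an almost disjoint family of infinite subsets of $A'$ --- to get infinite $\hat A\subseteq A'$ such that $\A_\zeta\langle\hat\psi_\tau(B)\rangle$ separates no member of $\mathrm{Prom}_\zeta$ for all $B\subseteq\hat A$; then, using (I) for $\A_\zeta$ and that $\{V_m:m\in\hat A\}\subseteq\coz(\A_\zeta)$ is pairwise disjoint, apply Lemma~\ref{new-promise} to get infinite $B\subseteq\hat A$ and countable $D\subseteq\bigcup_{m\in B}V_m$, $E\subseteq\bigcup_{m\in\hat A\setminus B}V_m$ not separated by $\A_\zeta$. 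Put $h_\zeta=\hat\psi_\tau(B)$ and $\mathrm{Prom}_{\zeta+1}=\mathrm{Prom}_\zeta\cup\{(D,E)\}$. The old promises survive by the choice of $\hat A$, and $(D,E)$ survives because $\hat\psi_\tau(B)$ vanishes on $\bigcup_m V_m\supseteq D\cup E$ by monotonicity of submorphisms (Lemma~\ref{morphism-monotone}), hence by Lemma~\ref{nonseparation-zero}.

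It remains to see that $\A$ works. Conditions (iv), (iii), (i) were arranged above. For (ii) I apply Proposition~\ref{few-star} to $L=\nabla\A$, a compact almost $P$-space by (i): given pairwise disjoint $(g_n)\subseteq C_1(L)\setminus\{0\}$ and pairwise disjoint $(W_m)\subseteq\coz(L)$ with $g_n|W_m=0$, their pullbacks under $\Pi_\A$ form a task $\tau$ whose functions lie in $\A=\bigcup_{\zeta<\cc}\A_\zeta$, hence in some $\A_{\zeta^*}$, so the bookkeeping processed $(\tau,A')$ for every infinite $A'$ at some step $\ge\zeta^*$. Put $\mathcal P=\hat\psi_\tau^{-1}[\A]$ and $\phi=T_\A^{-1}\circ(\hat\psi_\tau|\mathcal P)$ (using $\A\cong C(\nabla\A)$). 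Then $\mathcal P$ is a sublattice containing $\emptyset$ (Lemma~\ref{sublattice}); it contains all finite sets and is closed under removing initial segments, since $\hat\psi_\tau(F)=\sum_{n\in F}f_n\in\A$ and $\hat\psi_\tau(A\setminus k)=\hat\psi_\tau(A)-\sum_{n\in A\cap k}f_n$; $\phi(\{n\})=g_n$; $\phi(A)|W_m=0$ for $A\in\mathcal P$ by Lemma~\ref{morphism-monotone}; and for each infinite $A'$ the set $B$ chosen when $(\tau,A')$ was processed lies in $\mathcal P$ and, through the surviving promise $(D,E)$ with $D\subseteq\bigcup_{m\in B}V_m$, $E\subseteq\bigcup_{m\in A'\setminus B}V_m$, witnesses via Lemma~\ref{lemma-A-maximal} and Lemma~\ref{lemma-separation-equivalence}~(3) that $C_1(L)$ does not separate $\bigcup\{W_m:m\in B\}$ and $\bigcup\{W_m:m\in A'\setminus B\}$. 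Hence conditions (a)--(d) of Proposition~\ref{few-star} hold, so $C(\nabla\A)$ has few$^*$ operators.

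I expect the main obstacle to be precisely the successor step for (ii): enlarging the algebra enough to force condition (d) of Proposition~\ref{few-star} for a given $A'$ while not destroying any of the up to $<\cc$ non-separation promises accumulated so far. This is exactly what the strengthened Haydon lemma (Lemma~\ref{haydon}) is for --- crucially it allows $\A_\zeta$ (which contains $\B$, of density possibly $\cc$) to be large, demanding only that the promise set be small --- and it must be combined with Lemma~\ref{new-promise}, whose use forces one to propagate invariant (I) through the entire recursion, limit stages included, and to organize the bookkeeping so that each relevant task is met only after its defining functions have entered the algebra.
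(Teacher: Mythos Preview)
Your proposal follows essentially the same strategy as the paper --- a transfinite recursion adjoining single functions, maintaining non-separation promises, and using Lemma~\ref{haydon} together with Lemma~\ref{new-promise} at the key step --- but there is one genuine gap and one organisational difference worth noting.

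\textbf{The gap.} Your justification for invariant (I) at limit ordinals is wrong as stated. The claim that ``no copy of $\beta\N$ can appear in $\nabla\A_\lambda=\varprojlim_{\zeta<\lambda}\nabla\A_\zeta$'' is false in general: inverse limits can create copies of $\beta\N$ (for instance $\{0,1\}^\cc=\varprojlim_{F\text{ finite}}\{0,1\}^F$). The correct argument --- which is exactly what the paper does, though organised differently --- is that for $\lambda<\cc$ your $\A_\lambda$ is the closed algebra generated by $\B$ together with the set $\{h_\zeta:\zeta<\lambda\}$ of size $<\cc$; letting $\C$ be the closed algebra generated by the latter set, $\nabla\C$ has weight $<\cc$ and hence cannot contain $\beta\N$ (which has weight $\cc$), and then Lemma~\ref{betaN-product} gives that $\nabla\A_\lambda$ contains no $\beta\N$. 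Once you replace your inverse-limit sentence by this argument, (I) propagates correctly.

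\textbf{The organisational difference.} The paper avoids this limit-stage issue altogether by keeping the intermediate algebras $\A_\xi$ of density strictly less than $\cc$ (not containing $\B$) and applying Lemmas~\ref{haydon} and~\ref{new-promise} to the auxiliary closed algebra $\C_\xi$ generated by $\A_\xi\cup\B$; the non-separation invariant $(*_\xi)$ is stated for $\C_\xi$, and the fact that $\nabla\C_\xi$ has no $\beta\N$ is immediate from Lemma~\ref{betaN-product} since $\nabla\A_\xi$ has weight $<\cc$. The paper then handles closure of $\A=\bigcup_\xi\A_\xi$ by a separate bookkeeping block (the $F_\alpha$'s) that adjoins elements of $\overline{\A_\alpha}$. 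Your approach of starting with $\A_0=\B$ and working with closed $\A_\zeta$ throughout is equivalent once the limit-stage argument above is supplied, and is arguably more direct; the paper's approach has the advantage that ``$\nabla\A_\xi$ has no $\beta\N$'' is automatic from the density bound, so no separate limit verification is needed.

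The remaining points --- handling (i) via a dedicated job rather than folding it into (iii), fixing the submorphism $\hat\psi_\tau$ once per task, and the final verification of Proposition~\ref{few-star} --- are all correct and match the paper's argument.
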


\begin{proof} 
 Since $M$ has weight $\cc$, we can enlarge the set 
$\{\{e^{\xi\alpha}: \alpha<\cc\}: \xi<\cc\}$ by some pairwise disjoint collections
 of size $\cc$ with supports below every open set from a fixed 
 open basis of $M$ of cardinality $\cc$. 
 Such collections can be formed in $C_1(M)$ by Lemma \ref{disjoint-P}. 
 The fact that $\nabla\A$ is an almost $P$-space will then follow from Lemma \ref{dense-P}.
 Hence (i) follows from (iii) with a modified $\{\{e^{\xi\alpha}: \alpha<\cc\}: \xi<\cc\}$.

Before going to the construction of $\A$ let us fix some enumerations.
Let us write $$\mathfrak{c} = 
H_1\cup H_2\cup\bigcup_{\alpha<\mathfrak{c}}F_\alpha \cup \bigcup_{\alpha<\mathfrak{c}}G_{\alpha}$$
as a disjoint union where $|H_1|=|H_2|=|F_\alpha| = |G_{\alpha}| = \mathfrak{c}$, 
$0\in H_1$ and $F_\alpha \cap \alpha = G_{\alpha}\cap \alpha =\emptyset$ for each $\alpha<\mathfrak{c}$. 
We write $G_{\alpha} = \{\xi_{\alpha}^{\beta\gamma} : \beta, \gamma<\mathfrak c\}$.
We fix enumerations
\begin{enumerate}
    \item $\{A^\gamma: \gamma<\cc\}$ of all infinite subsets of $\N$,
    \item $\{h^{\xi}: \xi\in H_1\}$ of all elements  of $\B$ with $h^0$ a nonzero constant function.
\end{enumerate}
The enumeration as in (2) is possible by Lemma \ref{weight-remainder}.
Moreover we re-enumerate  
\begin{enumerate}
    \item[(3)] $\{\{e^{\xi\alpha}: \alpha<\cc\}: \xi<\cc\}$ as $\{\{e^{\xi\alpha}: \alpha<\cc\}: \xi\in H_2\}$.
\end{enumerate}

 We construct $\A$ as the
 union of a continuous increasing chain $(\mathcal A_\xi)_{\xi<\cc}$
of (not necessarily closed) subalgebras $\mathcal A_\xi$ of $C(M)$ of the densities smaller than $\cc$. 
Once the algebra $\mathcal{A}_\alpha$ for $\alpha<\cc$ is defined, we make the further enumerations:    
\begin{enumerate}
    \item[(4)] $\{h^{\xi\alpha} : \xi\in F_\alpha\}$ are all elements of the norm closure ${\overline{\A_\alpha}}$
    of $\A_\alpha$.
    \item[(5)] $\{(\{V_m^\beta(\alpha): {m\in\N}\}, \{f_n^\beta(\alpha) : {n\in \N}\}): \beta<\cc\}$ are all possible pairs such that
    \begin{enumerate}

        \item $\{V_m^\beta(\alpha): {m\in \N}\}$ is a family of pairwise disjoint  nonempty sets in $\coz(\A_\alpha)$,
        \item $\{f_n^\beta(\alpha) : {n\in \N}\}$ is a pairwise disjoint family in $(\A_\alpha)_{1}$ such that
         $f_n^\beta(\alpha)|V_m^\beta(\alpha)=0$
         for all $n \in \N$ and $m \in \N$.
        \item We fix for each $\beta < \cc$ submorphisms $\phi^\beta_\alpha: \wp(\N)\rightarrow C_1(M)$ 
        such that $\phi^\beta_\alpha(\{n\}) = f_n^\beta(\alpha)$ and $\phi^\beta_\alpha(\N)|V_m^\beta(\alpha)=0$
         for all $m\in \N$. Such submorphisms exist by the hypothesis of the lemma.
    \end{enumerate}
    Such enumerations are possible by a simple counting argument as the number 
    $|\coz(\A_\alpha)|\leq |\A_\alpha|\leq\cc$ and  $|\A_\alpha|^\omega\leq \cc^\omega=\cc$.
\end{enumerate}

Now we describe the transfinite recursive construction of $(\mathcal{A}_\xi)_{\xi<\cc}$. 
To be able to complete each stage $\xi<\cc$ of the construction we will need to know
that certain conditions about certain objects hold for the previous stages, so such 
objects are recursively introduced and the conditions are inductively proved. The objects are:
\begin{enumerate}
    \item[(6)] Countable sets $D_\xi, E_\xi\subseteq M$ such that $D_\xi\cap E_\xi=\emptyset$ for 
    $\xi\in\bigcup_{\alpha<\cc}G_\alpha$.
    \item[(7)] The closed subalgebra $\C_\xi$ of $C(M)$ generated by $\A_\xi\cup\B$  for $\xi < \cc$.
\end{enumerate}
The inductive conditions are
\begin{enumerate}
    \item[$(*_\xi)$] For every  $\xi'\in \xi\cap\bigcup_{\alpha<\cc}G_\alpha$ the sets 
    $D_{\xi'}$ and $E_{\xi'}$ are not separated by $\C_\xi$.

\end{enumerate}

We put $\A_0=\{0\}$.  So $(*_0)$ holds since there is no  $\xi'<\xi$.
At limit steps we shall define $\mathcal{A}_\xi = \bigcup_{\eta<\xi}\mathcal{A}_\eta$. 
Then $(*_\xi)$ holds by the inductive assumption and Lemma \ref{nonseparation-limit}.
At a successor stage, $\mathcal{A}_{\xi+1}$ will be the algebra generated by $\mathcal{A}_\xi$ 
and a certain element $g_\xi\in C(M)$ that we will add, $\mathcal{A}_{\xi+1} = \mathcal{A}_\xi\langle g_\xi\rangle$. 
Below we describe $g_\xi$ depending on what is $\xi$. Now that in the first three cases below
$\xi\not\in \bigcup_{\alpha<\cc}G_\alpha$ and so $D_\xi$ and $E_\xi$ are not defined by (6).
\begin{itemize}
    \item When $\xi \in H_1$, then $g_\xi$ will be $h^{\xi}$. In particular, $\A_1$ 
    is generated by the nonzero constant function $h^0$. Since $h^\xi \in \B$, it follows that $\C_{\xi+1}=\C_\xi$,
    so $(*_{\xi+1})$ 
      follows from $(*_\xi)$.
    \item When $\xi \in H_2$, then  $g_\xi$ will be one of the functions from the pairwise 
    disjoint family $\{e^{\xi\alpha}: \alpha<\cc\}$. 
    The union of all the sets $E_{\xi'}, D_{\xi'}$ for $\xi'\in \xi\cap\bigcup_{\alpha<\cc}G_\alpha$
     has cardinality smaller than $\cc$ (it is a union of less than $\cc$ countable sets).
      Hence, we can use the fact that $\{e^{\xi \alpha} : \alpha < \cc\}$ are pairwise 
      disjoint to find $\alpha<\cc$ such that $e^{\xi\alpha}|(E_{\xi'}\cup D_{\xi'})=0$ 
      for every $\xi'\in \xi\cap\bigcup_{\beta<\cc}G_\beta$. Then $(*_{\xi+1})$ 
      follows from $(*_\xi)$ and Lemma \ref{nonseparation-zero}.
    \item When $\xi \in F_\alpha$ for some $\alpha < \cc$, $g_\xi$ will 
    be $h^{\xi\alpha}$ (as $F_\alpha \cap \alpha = \emptyset$, 
    we get that $\alpha \leq \xi$, and $h^{\xi \alpha}$ has already been defined). 
    Then $\C_{\xi+1} = \C_\xi$ and $(*_{\xi+1})$ follows from $(*_\xi)$.
    \item When $\xi = \xi^{\beta\gamma}_{\alpha}\in G_\alpha$ 
    for some $\alpha<\xi$ then we consider $A^\gamma\subseteq\N$ 
    and pairwise disjoint families $\{f_n^\beta(\alpha): n\in\N\}\subseteq \A_\alpha$
     and $\{V_m^\beta(\alpha) : m\in\N\}\subset \coz(\A_\alpha)$ 
     which satisfy $f_n^\beta(\alpha)|V_m^\beta(\alpha)=0$ for every 
     $n,m\in \N$. We also consider the submorphism $\phi^\beta_\alpha$ as in (5c).
  
    We can apply Lemma~\ref{haydon} for the algebra $\CC_\xi$, for the family $\{f_n^\beta(\alpha), n\in A^\gamma\}$ 
    for the submorphism $\phi^\beta_\alpha|\wp(A^\gamma)$ and for the sets 
    $E_{\xi'}$, $D_{\xi'}$ for $\xi'\in \xi\cap\bigcup_{\alpha<\cc}G_\alpha$. 
    Indeed, $\CC_\xi\supseteq\A_\alpha$ (again, as $G_\alpha \cap \alpha = \emptyset$ 
    we get that $\alpha \leq \xi$) does not separate any of the pairs $E_{\xi'}$, 
    $D_{\xi'}$ by ($*_\xi$). Lemma \ref{haydon} provides us with an infinite 
    $A\subseteq A^\gamma$ such that for every infinite $B\subseteq A$ \\
    
    \begin{itemize}
        \item[(+)] The condition ($*_\xi$) holds in the algebra generated by $\C_\xi$ and $\phi_\alpha^\beta(B)$. \\
    \end{itemize}
 
    We will choose one of such $B$'s having in mind the preparation of new sets $E_\xi$ and $D_\xi$ to obtain $(*_{\xi+1})$. 
    Namely, we apply Lemma \ref{new-promise} to find $B^\xi \subseteq A$ and countable 
    $E_\xi, D_\xi\subseteq M$ such that $D_\xi$ and $E_\xi$ are not separated by 
    $\CC_\xi$ and $D_\xi \subseteq \bigcup\{V_m^\beta(\alpha) : m\in B^\xi\}$  and
    $E_\xi\subseteq \bigcup\{V_m^\beta(\alpha) : m\in A\setminus B^\xi\}$
     and $|D\cap V_m|=|E\cap V_{m'}|=1$ for every $m\in B^\xi$ and every $m'\in A\setminus B^\xi$. 
     The hypothesis of Lemma \ref{new-promise} is satisfied by Lemma \ref{betaN-product} 
     as the density of $\A_\xi$ is less then $\cc$, and so the weight of 
     $\nabla\A_\xi$ is less than $\cc$ and since by the hypothesis of the proposition $\nabla\B$ 
     does not contain a copy of $\beta\N$, so neither does $\nabla \CC_\xi$.

    Finally, define $g_\xi$ in this case of $\xi = \xi^{\beta\gamma}_{\alpha}\in G_\alpha$ 
    for some $\alpha<\xi$ by $g_\xi = \phi(B^\xi)$ and $\mathcal{A}_{\xi+1} = \mathcal{A}_\xi\langle g_\xi\rangle$. 
    To prove ($*_{\xi+1}$) use (+), the fact that $\phi_\alpha^\beta(\N)|V_m^\beta(\alpha)=0$ 
    by the choice in (5c), which implies that $\phi_\alpha^\beta(B^\xi)|V_m^\beta(\alpha)=0$
     by Lemma \ref{morphism-monotone} which implies that 
     $\phi_\alpha^\beta(B^\xi)|(D_\xi\cup E_\xi)=0$. Finally, we apply Lemma \ref{nonseparation-zero}
     to conclude ($*_{\xi+1}$).
\end{itemize}
     
This completes the description of the construction of $\A=\bigcup_{\xi<\cc} \A_\xi$ 
and the proof of ($*_\xi$) for every $\xi<\cc$, which by Lemma \ref{nonseparation-limit} 
implies that $\A$ does not separate $D_\xi$ and $E_\xi$ for every $\xi\in \bigcup_{\alpha<\cc} G_\alpha$.

Now we need to prove the properties of $\A$ claimed in the proposition.
The case of $\xi\in H_1$ takes care of (iv).
The case of $\xi\in H_2$ takes care of (iii)
and as noted at the beginning of the proof this also gives (i).

Further, $\A$ is a subalgebra of $C(M)$ because it is an increasing union of subalgebras 
of $C(M)$ and $\A$ is closed because at stages $\xi$ of the construction for 
$\xi\in F_\alpha$ for some $\alpha<\xi$ we obtain 
$\overline{\A_\alpha}\subseteq \A$ for every $\alpha<\cc$. 
Since the cofinality of $\cc$ is uncountable and the sequence 
$(\A_\alpha)_{\alpha<\cc}$ is increasing this implies that the limit of any sequence from $\A$ is in $\A$.

Finally, let us prove that $C(\nabla\A)$ has few$^*$-operators as in (ii).
 For this we check the hypothesis of Proposition \ref{few-star} 
 for $L = \nabla \A$. By Proposition \ref{proposition-Pi} (1) we may
 identify elements   $f\in C(\nabla\A)$ with elements $f\circ\Pi_\A\in \A$ which are all of this form.
 
 We already know that $\nabla\A$ is an almost $P$-space. 
 So, suppose that $\{V_m: m\in \N\}$ is a family of pairwise disjoint 
 nonempty sets in $\coz(\A)$ and that $\{f_n : {n\in \N}\}\subseteq (\A)_1$
   is pairwise disjoint and such that $f_n|V_m=0$ for every $m, n\in \N$. 
   Again, since the cofinality of $\cc$ is uncountable, there is $\alpha<\cc$ 
   such that $\{V_m: m\in\N\}\subseteq\coz(\A_\alpha)$ and 
   $\{f_n : {n\in \N}\}\subseteq \A_\alpha$. 
   It follows from (5) that there is $\beta<\cc$ such that $V_m=V_m^\beta(\alpha)$ and 
   $f_n=f_n^\beta(\alpha)$ for every $n, m\in \N$. 
   We claim that $\mathcal P=(\phi^{\beta}_{\alpha})^{-1} [\A]$ 
   and $\phi^\beta_\alpha|\mathcal P$ satisfy
    (a) - (d) of Proposition \ref{few-star}, and so $C(\nabla\A)$ has few$^*$-operators.

Note that $\mathcal{P} = (\phi^{\beta}_{\alpha})^{-1} [\A]$ is a sublattice of $\wp (\N)$
containing the least element $\emptyset$ by Lemma \ref{sublattice}. Also $\mathcal{P}$ 
contains finite sets as $\phi^\beta_\alpha(\{n\}) = f^\beta_n(\alpha) \in \A$ for every $n \in \N$
and finite sets are suprema of its singletons while $\A$ must contain
finite sums of its elements as an algebra, moreover  the sums of pairwise disjoint elements in $(\A)_1$ are 
suprema of corresponding finite families.

Further, if $A \in \mathcal{P}$ and $k \in \N$, then $A\cap k\in \mathcal P$ (as $A\cap k$ is finite)
and $A\setminus k$ are
disjoint and so are $\phi^\beta_\alpha(A\cap k)$ and $\phi^\beta_\alpha(A\setminus k)$
by Lemma \ref{disjoint-preserv}. As the suprema of disjoint elements of $(\A)_1$ are their sums we have

$$\phi^\beta_\alpha(A) = \phi^\beta_\alpha(A\cap k) + \phi^\beta_\alpha(A \setminus k),$$
and thus $\phi^\beta_\alpha(A \setminus k) = 
\phi^\beta_\alpha(A) - \phi^\beta_\alpha(A\cap k)  \in \A$
 and we have that $A \setminus k \in \mathcal{P}$. We have thus proved (a) of Proposition \ref{few-star}.
 
Condition (b) of Proposition \ref{few-star} is clear by the choice of  $\phi^\beta_\alpha$
and condition (c) of Proposition \ref{few-star} follows from the fact that 
$\phi^\beta_\alpha(\N)|V_m^\beta(\alpha) = 0$ for every $m \in \N$ and from Lemma \ref{morphism-monotone}. 

To prove (d) of Proposition \ref{few-star} pick any infinite $A\subseteq\N$. 
Then there is $\gamma<\cc$ such that $A=A^\gamma$. 
Let us observe what happened in the construction at stage 
$\xi=\xi_\alpha^{\beta\gamma}$. 
We found an infinite $B^\xi\subseteq A^\gamma=A$ 
and added $\phi_\alpha^\beta(B^\xi)$ to the algebra $\A_\xi$ forming $\A_{\xi+1}\subseteq\A$, 
moreover $\A$ does not separate $E_\xi$ and $D_\xi$ which
 implies that it does not separate their supersets $\bigcup\{V_m^\beta(\alpha) : m\in B^\xi\}$ and
  $\bigcup\{V_m^\beta(\alpha) : m\in A^\gamma\setminus B^\xi\}$ 
  which proves that (d) of Proposition \ref{few-star} is satisfied.
  So (ii) is satisfied which completes the proof of the proposition.
\end{proof}

\begin{proposition}\label{cor-topo} Suppose that $\XX$ is Banach space of density at most $\cc$
such that the dual ball $B_{\XX^*}$ considered with the weak$^*$ topology does not
admit a subspace homeomorphic to $\beta\N$.
Then there is a compact Hausdorff $K$ of weight at most continuum and a continuous surjection
 $\phi: K\rightarrow B_{\XX^*}$
such that $C(K)$ is an indecomposable Banach space with few$^*$ operators. In particular $C(B_{\XX^*})$, is
isometrically multiplicatively linearly isomorphic  to a closed subalgebra of $C(K)$.
\end{proposition}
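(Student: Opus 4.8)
The plan is to take $\X = B_{\XX^*}\times\R^2$, where $B_{\XX^*}$ carries the weak$^*$ topology, and to apply Proposition \ref{proposition-main} to its \v Cech-Stone remainder $M=\X^*$. First I would check that $M$ meets the hypotheses of Proposition \ref{proposition-main}: by the remarks opening Section 5, $\X$ is locally compact, $\sigma$-compact, noncompact, of weight at most $\cc$, so $M$ is a compact Hausdorff almost $P$-space of weight at most $\cc$ by Lemmas \ref{weight-remainder}, \ref{X*-Fspace} and \ref{remainder-P-space}; its weight is in fact exactly $\cc$, since for a fixed $x_0\in B_{\XX^*}$ the map $(x,y,z)\mapsto(x_0,y,0)$ retracts $\X$ onto a copy of $\R$, so $M$ contains a copy of $\R^*$, which has weight $\cc$. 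The submorphism-extension property demanded in Proposition \ref{proposition-main} is exactly the content of Lemma \ref{morphism-disjoint} applied to $\X$.

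Next I would assemble the two remaining inputs of Proposition \ref{proposition-main}. For the algebra $\B$: the coordinate projection $\X\to B_{\XX^*}$ extends to $\beta\X\to B_{\XX^*}$ and restricts to a continuous \emph{surjection} $M\to B_{\XX^*}$ — surjectivity because, for each $x\in B_{\XX^*}$, the closed noncompact set $\{x\}\times\R^2$ has closure in $\beta\X$ meeting $M$, and that closure is mapped into $\{x\}$. So Lemma \ref{small-algebra} provides an isometric linear embedding $T\colon\XX\to C(M)$ whose generated closed subalgebra $\B$ satisfies $\nabla\B\cong B_{\XX^*}$; in particular $\nabla\B$ contains no copy of $\beta\N$ and $\B$ is multiplicatively isometrically isomorphic to $C(B_{\XX^*})$. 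For the pairwise disjoint families $\{e^{\xi\alpha}:\alpha<\cc\}$, $\xi<\cc$, I would take precisely those supplied by Proposition \ref{main-connected} for this $\XX$ and this $\X$; they lie in $C_1(M)$ as required.

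Then I would feed $M$, $\B$ and $\{e^{\xi\alpha}:\alpha<\cc\}$ into Proposition \ref{proposition-main} and put $K=\nabla\A$ for the resulting closed subalgebra $\A\subseteq C(M)$. From conclusion (ii) of that proposition together with Proposition \ref{proposition-Pi} (2), the space $C(K)\cong\A$ has few$^*$ operators; from conclusion (iii) together with Proposition \ref{main-connected}, $K\setminus F=\nabla\A\setminus F$ is connected for every finite $F\subseteq K$; hence $C(K)$ is indecomposable by Theorem 2.5 of \cite{few} (cf.\ the Introduction). The weight bound follows since $\A$ is a union of an increasing chain of algebras of density $<\cc$, so its density character, and thus the weight of $\nabla\A$, is at most $\cc$. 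Finally, $\B\subseteq\A$ gives a continuous surjection $\nabla\A\to\nabla\B$ (restriction of the coordinate projection $\R^{\A}\to\R^{\B}$), which composed with $\nabla\B\cong B_{\XX^*}$ yields the required $\phi\colon K\to B_{\XX^*}$; by Proposition \ref{stone} (1), composition with $\phi$ embeds $C(B_{\XX^*})$ isometrically and multiplicatively into $C(K)$.

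As for the main obstacle: essentially all the real work has been front-loaded into Propositions \ref{proposition-main} and \ref{main-connected}, Lemmas \ref{small-algebra} and \ref{morphism-disjoint}, and Theorem 2.5 of \cite{few}, so what remains is bookkeeping — ensuring the single space $M$ simultaneously supports the submorphisms, carries $C(B_{\XX^*})$ as a subalgebra whose Gelfand space omits $\beta\N$, and carries the connectedness-witnessing families of Proposition \ref{main-connected}, and that the one algebra $\A$ output by Proposition \ref{proposition-main} inherits few$^*$ operators, the connectedness of all $K\setminus F$, and the copy of $C(B_{\XX^*})$ at the same time. The only genuinely non-routine points are the surjectivity of $M\to B_{\XX^*}$ and the weight computation for $M$ and $K$, both handled as indicated above.
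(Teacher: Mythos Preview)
Your proof is correct and follows essentially the same route as the paper's: take $M=(B_{\XX^*}\times\R^2)^*$, feed in the subalgebra $\B$ coming from the projection onto $B_{\XX^*}$ together with the families of Proposition \ref{main-connected}, apply Proposition \ref{proposition-main}, and conclude indecomposability via Theorem 2.5 of \cite{few}. The only minor differences are that you package the construction of $\B$ through Lemma \ref{small-algebra} (the paper defines $\B=\{f\circ\pi^*:f\in C(B_{\XX^*})\}$ directly and quotes Lemma \ref{homeo}), and you supply an argument that the weight of $M$ is \emph{exactly} $\cc$ --- a point the paper glosses over even though Proposition \ref{proposition-main} is stated with that hypothesis.
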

\begin{proof} We would like to use Proposition \ref{proposition-main} for
$M$ equal to the \v Cech-Stone remainder of $B_{\XX^*} \times \R^2$, i.e., 
$M=X^*$ with $X=B_{\XX^*} \times \R^2$.
    $M$ is an almost $P$-space 
    by Lemma \ref{remainder-P-space} and is of weight at most $\cc$ by Lemma \ref{weight-remainder}
    and  satisfies the remaining part of the hypothesis 
     of Proposition \ref{proposition-main}  by Lemma \ref{morphism-disjoint}. 
     
    Now for the use of Proposition \ref{proposition-main} we need a closed  subalgebra $\B$ of $C(M)$ such that
    $C(B_{\XX^*})$ isometrically embeds into $\B$ and $\nabla\B$ does not admit a subspace homeomorphic to $\beta\N$.
    For this note that $B_{\XX^*}$  is a continuous 
    image of $M=X^*$. Indeed, the sought continuous surjection can be $\pi^* = (\beta \pi)| X^*:M\rightarrow B_{\XX^*}$, 
    where $\pi: X \rightarrow B_{\XX^*}$ is the projection on the first coordinate. 
    Indeed, $\pi^*$ is clearly continuous an into $B_{\XX^*}$, 
    and it is also onto as for any $x \in B_{\XX^*}$ the 
    preimage $\pi^{-1}[\{x\}]$ is a subset of $X$ which is not compact, and thus any
     $x^* \in \overline{\pi^{-1}[\{x\}]} \setminus X \neq \emptyset$, 
     where the closure is taken in $\beta X$, satisfies $\pi^*(x^*) = x$.
     
     So define
     $$\B=\{f\circ\pi^*: f\in C(B_{\XX^*})\}\subseteq C(M).$$
    Then  $\nabla \B$ is
    homeomorphic to $B_{\XX^*}$  by Lemma \ref{homeo}
     and so does not admit a subspace homeomorphic to $\beta \N$. 
     We can thus apply Proposition \ref{proposition-main} for  this $\B$
      with $\{e^{\xi \alpha}: \alpha < \cc\}$, $\xi < \cc$, given by Proposition \ref{main-connected}
      obtaining $\A\subseteq C(M)$ satisfying Proposition \ref{proposition-main} and  Proposition \ref{main-connected}.  
      
      We claim that $K=\nabla\A$ satisfies the conclusion of the proposition.
      By Proposition \ref{proposition-main} (ii) the Banach space $C(\nabla\A)$ has few$^*$ operators
      and by Proposition \ref{proposition-main} (iii) and Proposition \ref{main-connected} the
      compact space $\nabla\A\setminus F$ is connected for each finite $F\subseteq\nabla\A$.
      This implies that $C(\nabla\mathcal{A})=C(K)$ is an indecomposable Banach space by Theorem 2.5. of \cite{few}.  
      Moreover by Lemma \ref{weight-remainder}  the weight of $X^*=M$  and so of $K$
      is at most continuum. 
      
      As $\B\subseteq\A\subseteq C(M)$ by Proposition \ref{proposition-main} (iv),
      we conclude that $\nabla\B$ is a continuous image
      of $\nabla\A=K$, the map being the canonical projection 
      from $\R^{\A}$ onto $\R^\B$ restricted to $\nabla\A$. Since $\nabla \B$ is
    homeomorphic to $B_{\XX^*}$  by Lemma \ref{homeo} we obtain that
    $B_{\XX^*}$ is a continuous image of $K=\nabla\A$ as required and 
    hence there is a linear multiplicative isometric embedding of  $C(B_{\XX^*})$  into $C(K)$
    by Proposition \ref{stone} (1).
      
\end{proof}

\section{Indecomposable spaces with $\ell_\infty$  as quotients}

\begin{definition}\label{def-ssp}
    Let $K$ be a compact Hausdorff space. We say that $C(K)$ has the weak subsequential 
    separation property if for every pairwise disjoint sequence $(f_n)_{n \in \N}$ in $C_1(K)$
     there is $f \in C_1(K)$ such that both of the sets $\{n \in \N: f_n \leq f\}$ and $\{n \in \N: f_n \wedge f = 0\}$ are infinite.
\end{definition}

Weak subsequential separation property of $C(K)$ spaces is motivated by the analogical property 
for Boolean algebras introduced in \cite{alg-universalis} -- a Boolean algebra 
$\A$ has the weak subsequential separation property if for every pairwise disjoint sequence 
$(A_n)_{n \in \N}$ of elements of $\A$ there is $A \in \A$
 such that both of the sets $\{n \in \N: A_n \leq A\}$ and $\{n \in \N: A_n \wedge A = 0\}$ are infinite. 
 This property is a generalization of subsequential completeness property,
  and was shown (see Corollary 1.5. of \cite{alg-universalis}) 
  to imply the existence of a copy of $\beta \N$ in the Stone space $K_\A$ of $\A$.
   We will show that the ideas of \cite{alg-universalis} can be adapted to the case when $K$ is not totally disconnected as well.

\begin{proposition}\label{weak-subseq-prop}
    Let $K$ be a compact Hausdorff space such that $C(K)$ has the weak subsequential separation property. 
    Then $K$ contains a copy of $\beta \N$. In particular, $C(K)$ admits $\ell_\infty$ as its quotient.
\end{proposition}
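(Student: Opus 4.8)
The plan is to construct an embedding of $\beta\N$ into $K$ by producing a suitable pairwise disjoint sequence of nonempty open sets and then using the weak subsequential separation property to control all the ways of splitting $\N$ into two pieces. First I would fix a faithfully indexed sequence of distinct points $(x_n)_{n\in\N}$ in $K$ together with pairwise disjoint open neighborhoods $W_n \ni x_n$; such a sequence exists because if $K$ were finite the statement is vacuous, and in an infinite Hausdorff compact space one can always extract a discrete sequence with pairwise disjoint open neighborhoods (shrinking via regularity). For each $n$ pick $f_n \in C_1(K)$ with $f_n(x_n)=1$ and $\coz f_n \subseteq W_n$, so that $(f_n)_{n\in\N}$ is pairwise disjoint in $C_1(K)$.

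The core of the argument is to show that the closure $D = \overline{\{x_n : n\in\N\}}$ in $K$, or rather a suitable subset of it, is homeomorphic to $\beta\N$. The standard criterion is: a compactification $D$ of the discrete space $\N$ equals $\beta\N$ if and only if any two disjoint subsets of $\N$ have disjoint closures in $D$ (equivalently, every bounded function $\N\to\R$ extends continuously to $D$; equivalently $\N$ is $C^*$-embedded in $D$). So, given $A \subseteq \N$, I would apply the weak subsequential separation property — but I must be careful, since the property as stated only gives an infinite subset on which separation works, not separation for the given $A$. The way around this, exactly as in \cite{alg-universalis}, is a Ramsey/bookkeeping argument: one shows that the weak subsequential separation property, applied repeatedly, yields that for a fixed infinite $M\subseteq\N$ obtained by iterating the property, the set $\{x_n : n\in M\}$ together with its closure behaves like $\beta M \cong \beta\N$. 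Concretely, given a partition of $M$ into $M_0 \sqcup M_1$, one uses the property (after possibly thinning, but the thinning can be absorbed because $\beta\N$ is closed under passing to closures of infinite subsets) to get $f\in C_1(K)$ with $f_n \le f$ for infinitely many $n$ and $f_n \wedge f = 0$ for infinitely many $n$; chasing which side of the partition these indices land in, and combining with a diagonalization over all splittings, forces $\overline{\{x_n: n\in M_0\}} \cap \overline{\{x_n : n\in M_1\}} = \emptyset$.

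The precise extraction — turning "for every disjoint sequence there is $f$ separating an infinite subsequence" into "there is an infinite $M$ such that $\overline{\{x_n : n \in M'\}} \cap \overline{\{x_n : n \in M''\}} = \emptyset$ for every partition $M = M' \sqcup M''$" — is the main obstacle, and it is exactly the content of the next two results cited in the introduction, Propositions \ref{subseq-top} and the adaptation of \cite{alg-universalis}; so in the actual write-up I expect this Proposition to be reduced to those. Modulo that extraction, the conclusion follows: by Lemma \ref{homeo}-type reasoning (or directly from the classical characterization of $\beta\N$), the subspace $\overline{\{x_n : n\in M\}}$ of $K$ is homeomorphic to $\beta\N$, so $K$ contains a copy of $\beta\N$. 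Finally, the ``in particular'' clause: if $Z\subseteq K$ is homeomorphic to $\beta\N$, then the restriction map $C(K)\to C(Z)\cong \ell_\infty$ is a quotient map by the Tietze extension theorem (every $g\in C(Z)$ extends to $K$ with the same norm, after truncating), so $\ell_\infty$ is a quotient of $C(K)$; this is also immediate from Proposition \ref{talagrand} since a copy of $\beta\N$ in $K = B_{C(K)^*}$-style reasoning, or simply by noting $Z \hookrightarrow B_{C(K)^*}$ via point evaluations and invoking Proposition \ref{talagrand}(4)$\Rightarrow$(3).
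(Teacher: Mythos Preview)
Your plan has a genuine gap at exactly the point you flag as ``the main obstacle''. You want an infinite $M\subseteq\N$ such that for \emph{every} partition $M=M'\sqcup M''$ the closures of $\{x_n:n\in M'\}$ and $\{x_n:n\in M''\}$ are disjoint. But there are $\cc$ many such partitions, and a countable diagonalization/thinning procedure can only handle countably many requirements; your remark that ``thinning can be absorbed because $\beta\N$ is closed under passing to closures of infinite subsets'' does not rescue this. Moreover, the reductions you invoke are circular: Proposition~\ref{subseq-top} goes in the opposite direction (a topological hypothesis implies the weak subsequential separation property), and the ``adaptation of \cite{alg-universalis}'' is precisely the content of the proof you are supposed to supply here, not a separate lemma you may cite.

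The paper's proof takes a completely different route and never tries to realize $\beta\N$ as the closure of a discrete sequence. Instead one builds a Cantor tree of functions $g_s\in C_1(K)$, $s\in\{0,1\}^{<\omega}$, supported on a dyadic system of open sets, and then uses a combinatorial trick from \cite{alg-universalis} (a system $(A_s)$ of clopen sets in the Cantor set that is disjoint along branches but has nonempty finite intersections across antichains) to recombine the $g_s$ into functions $f_s$ with an ``inverted'' incidence pattern. For each branch $\sigma\in\{0,1\}^\N$ the sequence $(f_{\sigma|n})_n$ is pairwise disjoint, and \emph{one} application of the weak subsequential separation property per branch yields $f_\sigma\in C_1(K)$. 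The resulting family $(f_\sigma^{-1}\{0\},f_\sigma^{-1}\{1\})_{\sigma\in\{0,1\}^\N}$ is independent of size $\cc$, so $K$ maps onto $\{0,1\}^{\cc}$ and hence contains $\beta\N$ by projectivity. The point is that one applies the separation property $\cc$ many times in parallel rather than trying to diagonalize; your outline does not contain this idea.
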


\begin{proof}
    First we note that $K$ cannot contain a nontrivial convergent sequence -- if $(x_n)_{n \in \N}$ 
    was an injective sequence in $K$ converging to some $x \in K$, 
    then we could find a pairwise disjoint sequence $(f_n)_{n \in \N}$ in $C_1(K)$ with $f_n(x_n)=1$ 
    for $n \in \N$. Applying weak subsequential separation property to this sequence would 
    give us $f \in C_1(K)$ such that $f(x_n) = 1$ for infinitely many $n \in \N$ and also $f(x_n) = 0$
     for infinitely many $n \in \N$, which contradicts continuity of $f$. 
     Hence, $K$ cannot be scattered and so contains a nonempty 
     perfect set (e.g. by 1.7.10. of \cite{engelking}).  So we may assume that
     $K$ has no isolated points, as it is enough to find a copy of $\beta\N$ in a subspace of $K$.
     
     We can thus use the normality of $K$ and the hypothesis that $K$ 
     has no isolated points to see that that there is a family $(U_s, V_s)_{s \in \{0,1\}^{< \omega}}$
      of pairs of nonempty open sets in $K$ such that:
    \begin{enumerate}[(i)]
        \item $U_{s \frown (i)} \subseteq \overline{U_{s \frown (i)}} \subseteq V_s \subseteq \overline{V_s} \subseteq U_s$
         for every $s \in \{0,1\}^{< \omega}$ and $i \in \{0,1\}$.
        \item $U_{s \frown (0)} \cap U_{s \frown (1)} = \emptyset$ for every $s \in \{0,1\}^{< \omega}$.
    \end{enumerate}
    We fix for $s \in \{0,1\}^{< \omega}$ a function $g_s \in C_1(K)$ such that
     $g_s|V_s = 1$ and $g_s|(K \setminus U_s) = 0$. 
     It is readily proved that for incomparable $s,t \in \{0,1\}^{<\omega}$
      the functions $g_s$ and $g_t$ are disjoint, and if $s_1,\dots,s_k \in \{0,1\}^{<\omega}$ 
      are pairwise comparable, then $(g_{s_1} \wedge \cdots \wedge g_{s_k}) | \bigcap_{j=1}^k V_{s_j} = 1$.

    It follows from the proof of Theorem 1.4. of \cite{alg-universalis} (see (*) and (**) on page 306)
     that there is a system $(A_s)_{s \in \{0,1\}^{<\omega}}$ of clopen subsets of the Cantor set such that
    \begin{enumerate}[(a)]
        \item $A_s \cap A_t = \emptyset$ if $s \subsetneq t$,
        \item $A_{s_1} \cap \cdots \cap A_{s_k} \neq \emptyset$ if $s_1,\dots,s_k$ are pairwise incomparable. 
    \end{enumerate}
    We denote by $B_s$, for $s \in \{0,1\}^{<\omega}$, the basic clopen subset of the Cantor set, 
    $B_s = \{\sigma \in \{0,1\}^\N: s \text{ is an initial segment of } \sigma\}$. 
    These sets have the property that $B_s \cap B_t = \emptyset$ if $s,t$ are incomparable and 
    $B_s \cap B_t = B_s$ if $t \subsetneq s$.
    Moreover, any clopen subset of the Cantor set is a finite union of these basic sets, 
    and in particular for every $s \in \{0,1\}^{<\omega}$ there is $k_s\in\N$ such that
    \begin{align*}
        A_s = B_{t(s,1)} \cup \cdots \cup B_{t(s,k_s)} \text{ for some } t(s,1),\dots,t(s,k_s) \in \{0,1\}^{<\omega}.
    \end{align*}
    For $s \in \{0,1\}^{<\omega}$ set $f_s = \bigvee_{i=1,\dots,k_s} g_{t(s,i)} \in C_1(K)$. 
    We check that the system $(f_s)_{s \in \{0,1\}^{<\omega}}$ has properties analogous to (a) and (b), namely:
    \begin{enumerate}[(A)]
        \item $f_s$ and  $f_t$ are disjoint if $s \subsetneq t$,
        \item If $s_1,\dots,s_l$ are pairwise incomparable, 
        then there is $x \in K$ such that $f_{s_1}(x) = \dots = f_{s_l}(x) = 1$.
    \end{enumerate}
    To prove (A) note that if $s \subsetneq t$ then $A_s \cap A_t = \emptyset$ by (a). 
    Hence, for every $i=1,\dots,k_s$ and $j=1,\dots,k_t$ we have 
    $B_{t(s,i)} \cap B_{t(t,j)} = \emptyset$ and $t(s,i)$ and $t(t,j)$ are incomparable. 
    It follows that $f_s$ and $f_t$ are disjoint and we have (A).
    Now we show (B). Let $s_1,\dots,s_l \in \{0,1\}^{<\omega}$ be incomparable. 
    Then $A_{s_1} \cap \cdots \cap A_{s_l} \neq \emptyset$, and hence there are 
    $i_1,\dots,i_l$ such that $B_{t(s_1,i_1)} \cap \cdots \cap B_{t(s_l,i_l)} \neq \emptyset$. 
    But then $t(s_1,i_1),\dots,t(s_l,i_l)$ are pairwise comparable and there is $x \in K$ such 
    that $g_{t(s_1,i_1)}(x) = \dots = g_{t(s_l,i_l)}(x) = 1$ (we can pick any $x \in \bigcap_j V_{t(s_j,i_j)} \neq \emptyset$). 
    Hence, (B) is true as $1 \geq f_{s_j}(x) \geq g_{t(s_j,i_j)}(x)$ for every $j=1,\dots,l$. We are done with proving (A) and (B).

    For $\sigma \in \{0,1\}^\N$ the sequence $(f_{\sigma|n})_{n \in \N}$ is pairwise disjoint by (A), 
    and hence we can use the weak subsequential separation property to find disjoint infinite sets 
    $N_\sigma, M_\sigma \subseteq \N$ and $f_\sigma \in C_1(K)$ such that $f_{\sigma|n} \leq f_\sigma$ for $n \in N_\sigma$
     and $f_{\sigma|n} \wedge f_\sigma= 0$ for $n \in M_\sigma$. We will show that if we 
     set $C_\sigma = f_\sigma^{-1}[\{0\}]$ and $D_\sigma = f_\sigma^{-1}[\{1\}]$, 
     then $(C_\sigma, D_\sigma)_{\sigma \in \{0,1\}^\N}$ 
     is independent (that is, for any finite disjoint sets $F,G \subseteq \{0,1\}^{\N}$ 
     we have $\bigcap_{\sigma \in F} C_\sigma \cap \bigcap_{\sigma \in G} D_\sigma \neq \emptyset$). 
     Fix finite disjoint sets $F,G \subseteq \{0,1\}^{\N}$ and find $m \in \N$ 
     large enough so that the family $\{\sigma|m: x \in F \cup G\}$ is pairwise distinct. 
     For every $\sigma \in F$ find $m_\sigma\geq m$ such that $m_\sigma \in N_\sigma$
      and for every $\sigma \in G$ find $m_\sigma \geq m$ such that $m_\sigma \in M_\sigma$. 
      Then $\{\sigma|m_\sigma: \sigma \in F \cup G\}$ is pairwise incomparable and we can use
       (B) to find $x \in K$ such that $f_{\sigma|m_\sigma}(x) = 1$ for every $\sigma \in F \cup G$. 
       If $\sigma \in F$, then $m_\sigma \in N_\sigma$ and $f_\sigma(x) \geq f_{\sigma|m_\sigma}(x) = 1$, 
       so $x \in C_\sigma$. If $\sigma \in G$, then $m_\sigma \in M_\sigma$ and $f_\sigma(x) = 0$ 
       (as $f_{\sigma|m_x}(x) = 1$ and $f_\sigma$ is disjoint with $f_{\sigma|m_\sigma}$), so $x\in D_\sigma$. 
       Hence, $x\in \bigcap_{\sigma \in F} C_\sigma \cap \bigcap_{\sigma \in G} D_\sigma$ 
       and $(C_\sigma, D_\sigma)_{\sigma \in \{0,1\}^{\N}}$ is independent.
    
    Let $\phi: K \rightarrow [0,1]^{\{0,1\}^\N}$ be the continuous map defined by 
    $\phi(x) = (f_\sigma(x))_{\sigma \in \{0,1\}^\N}$ for $x \in K$. 
    Then $\phi[K]$ is a closed subset of $[0,1]^{\{0,1\}^\N}$
     which contains $\{0,1\}^{\{0,1\}^\N}$. 
     Indeed, it is closed as it is a continuous image of a compact set $K$. 
     To see that it contains $\{0,1\}^{\{0,1\}^\N}$ pick $u \in \{0,1\}^{\{0,1\}^\N}$
      and a basic neighborhood $V$ of $u$ of the form 
      $V = \{v \in [0,1]^{\{0,1\}^\N}: |v(\sigma) - u(\sigma)| < \epsilon \text{ for all } \sigma \in H\}$
       for some finite set $H \subseteq {\{0,1\}^\N}$ and $\epsilon > 0$. Set $F = \{\sigma \in H: u(\sigma) = 1\}$ 
       and $G = \{\sigma \in H: u(\sigma) = 0\} = H \setminus F$. 
       By the independence of $(C_\sigma, D_\sigma)_{\sigma \in \{0,1\}^{\N}}$ 
       there is some $x \in \bigcap_{x \in F} C_\sigma \cap \bigcap_{\sigma \in G} D_\sigma$. 
       Then $\phi(x) \in \phi[K] \cap V$. It follows that $u \in \overline{\phi[K]} = \phi[K]$.
        As the cardinality of $\{0,1\}^\N$ is $\cc$, 
        we get that that there is a closed subset $L$ of $K$ which continuously maps onto
         $\{0,1\}^\cc$, and this is known to be equivalent to $K$ 
         containing a copy of $\beta \N$ (Lemma \ref{projectivity}). 
\end{proof}

The following proposition provides a sufficient and purely topological condition
for a compact space $K$ to contain a copy of $\beta\N$.

\begin{proposition}\label{subseq-top}
    Let $K$ be a compact Hausdorff space with the following
    property:   whenever $U_n$ for $n\in \N$
    are pairwise disjoint cozero sets, then there are disjoint infinite
    $A, A'\subseteq\N$ such that
    $$\overline{\bigcup_{n\in A}U_n}\cap \overline{\bigcup_{n\in A'}U_n}=\emptyset.$$
    Then $C(K)$ has the weak subsequential 
    separation property. In particular, $K$ admits a subspace homeomorphic to
    $\beta\N$.
\end{proposition}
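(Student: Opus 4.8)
The plan is to establish the weak subsequential separation property of $C(K)$ (Definition \ref{def-ssp}) directly from the hypothesis and then quote Proposition \ref{weak-subseq-prop} for the final assertion. So I would start with an arbitrary pairwise disjoint sequence $(f_n)_{n\in\N}$ in $C_1(K)$ and pass to the associated cozero sets $U_n=\coz(f_n)$. These are pairwise disjoint cozero sets, so the hypothesis of the proposition applies and produces disjoint infinite sets $A,A'\subseteq\N$ with
$$\overline{\bigcup_{n\in A}U_n}\,\cap\,\overline{\bigcup_{n\in A'}U_n}=\emptyset .$$

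Next, using that a compact Hausdorff space is normal, I would apply Urysohn's lemma to obtain $f\in C_1(K)$ with $f\equiv 1$ on $\overline{\bigcup_{n\in A}U_n}$ and $f\equiv 0$ on $\overline{\bigcup_{n\in A'}U_n}$, and claim that this $f$ witnesses the property. For $n\in A$ one checks $f_n\le f$ pointwise: on $\coz(f_n)=U_n\subseteq\bigcup_{m\in A}U_m$ we have $f=1\ge f_n$ because $f_n\in C_1(K)$, and off $\coz(f_n)$ we have $f_n=0\le f$. For $n\in A'$ one checks $f_n\wedge f=0$: if $f_n(x)\ne 0$ then $x\in U_n\subseteq\overline{\bigcup_{m\in A'}U_m}$, so $f(x)=0$. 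Hence $A\subseteq\{n:f_n\le f\}$ and $A'\subseteq\{n:f_n\wedge f=0\}$, and both of these sets are infinite, so $C(K)$ has the weak subsequential separation property. The ``in particular'' statement is then immediate from Proposition \ref{weak-subseq-prop}.

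I do not expect a genuine obstacle in this argument: its entire content is the single invocation of the hypothesis, which hands us exactly the pair of index sets whose cozero unions have disjoint closures, after which Urysohn's lemma mechanically converts this topological separation into the function required by Definition \ref{def-ssp}; the remaining pointwise verifications are routine, and they also cover the degenerate case in which some or all of the $f_n$ vanish, since then $f_n\le f$ and $f_n\wedge f=0$ hold trivially for those indices. If one wanted a proof of the last sentence not relying on Proposition \ref{weak-subseq-prop}, one could instead reproduce the construction given there: build the Cantor scheme of functions, use the weak subsequential separation property to extract an independent family $(C_\sigma,D_\sigma)_{\sigma\in\{0,1\}^\N}$ of zero sets, and conclude that $K$ maps continuously onto $\{0,1\}^\cc$, which by Lemma \ref{projectivity} yields a copy of $\beta\N$ inside $K$.
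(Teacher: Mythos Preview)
Your proof is correct and follows essentially the same approach as the paper: take $U_n=\coz(f_n)$, apply the hypothesis to obtain $A,A'$, and use normality (Urysohn) to produce the separating $f\in C_1(K)$. The paper's proof is identical in structure but terser, omitting the explicit pointwise verifications of $f_n\le f$ and $f_n\wedge f=0$ that you spell out.
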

\begin{proof} Let $f_n\in C_1(K)$ be pairwise disjoint for $n\in \N$ and let
$U_n=f_n^{-1}[\R\setminus\{0\}]$. By the property of $K$ from the hypothesis of the proposition and by the normality of $K$
there are disjoint infinite $A, A'\subseteq\N$ and there is $f\in C_1(K)$ such that 
$$f|\big(\overline{\bigcup_{n\in A}U_n}\big)=1\ \hbox{and}\ f|\big(\overline{\bigcup_{n\in A'}U_n}\big)=0.$$
It follows that $f$ has the properties desired for the weak subsequential 
    separation property.
\end{proof}

\section{Acknowledgements}

The authors would like to thank S. Argyros for pointing out the results from \cite{argyros-r, argyros-etal}
and N. de Rancourt for pointing out the relevance of \cite{bourgain} which allows to conclude  Remark \ref{noe}.
We would also like to thank the referees for helpful comments.

\bibliographystyle{amsplain}

\end{document}